\documentclass[a4paper]{article}
\usepackage{mystyle}
\usepackage{tikz}
\usetikzlibrary{arrows.meta, positioning}

\newcommand{\GNEP}{\mathrm{GNEP}(\{(W^\nu_0(\cdot),\theta^\nu)\}_{\nu=1}^N)}
\newcommand{\hatGNEP}{\widehat{\mathrm{GNEP}}(\{(\hat{W}^\nu_0(\cdot),\theta^\nu)\}_{\nu=1}^N)}
\newcommand{\NEP}{\mathrm{NEP}_\rho(\{(W^\nu,\Theta^\nu_\mu(\cdot;\rho))\}_{\nu=1}^N)}
\newcommand*{\hatRGNEP}[1][\delta]{\widehat{\mathrm{RGNEP}}_{#1}(\{(\hat{W}^\nu_{#1^\nu}(\cdot),\theta^\nu)\}_{\nu=1}^N)}
\newcommand{\graph}{\mathrm{gph}\ }

\newcommand{\dist}{\mathrm{dist}}
\renewcommand{\Re}{\mathbb{R}}

\definecolor{mygreen}{RGB}{0,150,60}
\definecolor{horired}{RGB}{207,0,0}

\title{%
    A Regularized Nikaido--Isoda Function Approach to Multi-Leader--Follower Games%
    \thanks{This work was supported by 
            Japan Society for the Promotion of Science,
            and Grant-in-Aid for Early-Career Scientists (JP26K21172),
            Grant-in-Aid for Scientific Research (C) (JP25K15008, JP25K15002), and
            Grant-in-Aid for Scientific Research (B) (JP25K03082).}
}
\author{
    Atsushi Hori\thanks{Graduate School of Engineering, Nagoya Institute of Technology, Aichi, Japan.}
    \and
    Takayuki Okuno\thanks{Faculty of Science and Technology, Seikei University, Tokyo, Japan.}
    \and 
    Ellen H. Fukuda\thanks{Graduate School of Informatics, Kyoto University, Kyoto \mbox{606--8501}, Japan.}
}
\date{\today}

\begin{document}
\maketitle


\begin{abstract}
A multi-leader--follower game (MLFG) is a hierarchical noncooperative game in which leaders compete at the upper level while taking into account the followers' best responses at the lower level.
A typical approach to solving the MLFG reformulates it as an equilibrium problem with equilibrium constraints (EPECs) by replacing the lower-level game with its KKT conditions.
Another approach, when each follower's response is unique,
is to reformulate the MLFG as a Nash equilibrium problem by substituting these response functions into each leader's problem.
However, both reformulations may lack scalability since higher-order derivatives may be required  when solving the resulting problems.

In this paper, we propose a new reformulation of the MLFG by exploiting a regularized Nikaido--Isoda function
and approximating the MLFG by a single-level differentiable Nash equilibrium problem
with a penalty parameter.
The proposed reformulation neither requires derivative information on the followers' game nor assumes convexity of each follower's problem; hence, it can handle a broader class of MLFGs.
Under global subanalyticity, we analyze 
the mathematical relationship between 
equilibria of the original MLFG and the proposed reformulation.

\end{abstract}
\keywords{Multi-leader--follower game \and Value-function 
\and Nikaido--Isoda function \and Penalty-based method}


\section{Introduction}

The multi-leader--follower game (MLFG) is a hierarchical
noncooperative model
that extends the classical Stackelberg game.
In this framework, multiple leaders first take into account the response of (possibly multiple) followers, who subsequently
engage in a Nash equilibrium problem among themselves.
Such a game was introduced in Heinrich F.~von Stackelberg's 
seminal work in 1934, prior to Nash's equilibrium 
theory of noncooperative games in 1950~\cite{Nash1950}.

In recent decades, the MLFG has gained significant attention across various
fields due to its ability to model hierarchical decision-making in decentralized
systems.
Some  applications include strategic bidding in electricity markets~\cite{Allevi2018,Lei2023,Leyffer2010}, resource management in cloud and edge computing~\cite{Kim2018,Lyu2022,Xiong2019,Xu2021,Zhang201675s},
and market design~\cite{Ashraf2023,Kumar2022,Vazifeh2021}.
For instance, in electricity markets studied by
Leyffer and Munson~\cite{Leyffer2010} and Pang and Fukushima~\cite{Pang2005n},
generators (leaders) set prices 
strategically before the market clears, while other market participants
(followers) respond by adjusting their demand and supply accordingly.
For surveys of the MLFG, see Hu and Fukushima~\cite{Hu2015} and
Aussel and Svensson~\cite{Aussel2020}.

Despite their applicability, solving MLFGs remains challenging.
In fact, MLFGs are more complex than standard bilevel optimization
because they necessitate finding a Nash equilibrium at the upper level,
where each leader's strategy is constrained by the equilibrium of the
lower-level game.
Historically, two main approaches have been considered in the MLFG literature.

One approach is to solve the MLFG by reformulating it into an
equilibrium problem with equilibrium constraints (EPEC).
By replacing the followers' Nash equilibrium problems with 
Karush--Kuhn--Tucker (KKT) conditions, each leader's problem is transformed
into a mathematical problem with equilibrium constraints (MPEC).
Leyffer and Munson~\cite{Leyffer2010} proposed an MPEC-based method by 
considering a first-order necessary condition of all leaders' MPECs,
leading to systems of complementarity conditions.
Then, they are solved with algorithms given by Su~\cite{Su2004}.
Later, Hori and Fukushima~\cite{Hori2019} proposed a diagonal method for the 
resultant EPEC using a penalty method adapted to MLFGs and
utilizing a sequential penalty method developed by Huang et al.~\cite{Huang2006}.

Another approach introduces a response function of followers' game,
assuming the uniqueness of the Nash equilibrium.
By plugging the unique response (e.g., denoted by $y(x)$, where $x$ is 
a tuple of leaders' strategies) into each leader's problem,
the MLFG is reduced to an ordinary single-level Nash equilibrium problem.
However, because the response function $y(x)$ is generally nonsmooth,
smoothing schemes are required.
Hu and Fukushima~\cite{Hu2011} first proposed a response-based method 
where a single follower solves an equality-constrained quadratic optimization
problem with an analytical response.
Herty et al.~\cite{Herty2022} extended this framework to followers with linear inequality
constraints and proposed a smoothing scheme to obtain an approximate leader--follower Nash equilibrium with gradient-based methods.
%
%
These studies assumed that the followers' response $y(x)$ is analytically obtained.
In contrast to these approaches, Hori et al.~\cite{Hori2024} proposed a general framework that does not assume specific structures for players' objective functions and constraints.

Despite those developments, existing studies have some drawbacks. 
For the EPEC-based method, reformulating the followers' game
as a KKT system requires first-order derivatives.
As a consequence, solving the resulting EPEC often requires higher-order derivatives of the followers' functions.
Moreover, when the lower-level problem is not convex, MPEC and EPEC reformulations 
are usually not equivalent to the original problem; see~\cite{Dempe2007,Dutta2006}.
As for response-based approaches, they often assume the uniqueness of the followers' response, 
which is practically the same as assuming strong convexity or monotonicity in the followers' noncooperative game.

In this paper, motivated by those limitations, we propose a new reformulation of
the MLFG by utilizing the Nikaido--Isoda function for the followers' game.
Our approach neither needs any derivative information nor assumes strong convexity in the followers' problems.
We reformulate the MLFG into a differentiable single-level
Nash equilibrium problem with penalty terms, and then analyze the 
relationship between the resultant Nash game and the original MLFG.
In fact, similar approaches have recently attracted the attention of researchers in
bilevel optimization, where several algorithms~\cite{Chen202567l,Lu2024,Yao2024,Yao2024gy6}
have been proposed, which support our approach.

Our contributions are summarized as follows:
\begin{itemize}
    \item 
    We propose a differentiable single-level reformulation of the MLFG,
	which does not use any derivative information with respect to
	each follower's problem, by  using the regularized Nikaido--Isoda function.

    \item We establish the relationship between the 
	leader--follower equilibrium of the original MLFG and Nash equilibrium 
	of the resultant single-level Nash equilibrium problem
	(Theorem~\ref{thm:relations.NE});

	\item
    We also establish the relationship between the \emph{variational
	equilibria}, which means the stationarity condition,
	of the original MLFG and the single-level Nash equilibrium problem
	(Theorem~\ref{thm:VE.rel}).
\end{itemize}

This paper is organized as follows:
Section~\ref{sec:preliminaries} introduces mathematical notions and 
Nash equilibrium problems to provide fundamental tools for analysis.
Section~\ref{sec:reformulation} proposes a new level-reduction technique
for the MLFG with regularized Nikaido--Isoda function and approximates
it by a certain penalized problem.
Section~\ref{sec:relations} analyzes the relationship between the penalized
problem and the original MLFG.
Section~\ref{sec:conclusion} concludes this paper.

Throughout this work, we use the following notations:
Given $m$ vectors $x^1,\dots,x^m$, where $x^i\in\Re^{n_i}$ for $i=1,\dots,m$,
the concatenated vector $x\coloneq ((x^1)^\top,\dots,(x^m)^\top)^\top$ with $^\top$ being the transpose, is simply written as $x=(x^1,\dots,x^m)$.
To emphasize the $i$th subvector $x^i\in\Re^{n_i}$, $x$ is sometimes denoted as $x=(x^i,x^{-i})$,
where $x^{-i}\coloneq (x^1,\dots,x^{i-1},x^{i+1},\dots,x^m)$ represents the tuple of all subvectors 
except $x^i$.
For the real-valued function $f\colon\Re^{n+m}\to\Re$, $\nabla_y f(x,y)$ 
denotes the partial gradient of $f$ 
with respect to the second variable $y$.
For the vector-valued function $F\colon\Re^n\to\Re^m$, the transposed Jacobi
matrix $\mathcal{J}F(x)^\top$ is denoted by $\nabla F(x)\in\Re^{n\times m}$;
we simply call it the Jacobian or Jacobi matrix of $F$.
Similarly, for $F\colon\Re^{n+m}\to\Re^l$, the partial (transposed) Jacobi matrix
with respect to $x$ is denoted by $\nabla_x F(x,y)$.
Also, the Euclidean norm and inner product are written as $\|\cdot\|$ and $\langle \cdot,\cdot \rangle$, respectively.
We denote by $\mathbb{B}$ the open unit ball centered at 0, and let $\mathbb{B}(x,\varepsilon)\coloneq\{y\in\Re^n\mid \|y-x\|<\varepsilon\}
=x+\varepsilon\mathbb{B}$ for a vector $x\in\Re^n$ and 
positive real number $\varepsilon>0$.
Let $\bar{\mathbb{B}}$ and $\bar{\mathbb{B}}(x,\varepsilon)$ be the closure of the open balls,
i.e., the closed balls.
We denote the normal cone to a convex set $\Sigma \subset \Re^n$ at $x\in\Sigma$ as $\mathcal{N}_{\Sigma}(x) \coloneq \{s\in\Re^n \mid \langle s, y-x \rangle \le 0\; \forall y \in \Sigma\}$.
The Euclidean distance from $x\in\Re^n$ to $S\subset\Re^n$ is denoted by $\dist(x,S)$.

\section{Preliminaries}\label{sec:preliminaries}

In this section, we provide some concepts regarding convex analysis
and recall the (generalized) Nash equilibrium problem.

\subsection{Notations and fundamental properties}

The graphs $\graph f$ and $\graph T$ of the real-valued function 
$f\colon\Re^n\to\Re$ and set-valued mapping
$T\colon\Re^n\rightrightarrows\Re^m$ are defined as follows:
$$
\graph f\coloneq\{(x,\lambda)\in\Re^n\times\Re\mid \lambda=f(x)\},
$$
and
$$
\graph T\coloneq\{(x,\lambda)\in\Re^n\times\Re^m\mid \lambda\in T(x)\},
$$
respectively.
The function $f$ is $L$-smooth if there exists $L>0$ such that
$$
\|\nabla f(x)-\nabla f(x')\|\le L\|x-x'\|\quad\forall x,x'\in \Re^n,
$$
and $f$ is said to be \emph{$\mu$-weakly convex} with $\mu > 0$ if $f+\mu/2\|\cdot\|^2$ is convex.

The following lemmas are well-known results in nonlinear optimization;
we omit those proofs.

\begin{lemma}\label{lem:weakly.convex}
	Suppose that $f$ is $L$-smooth. 
	Then, $f$ is $L$-weakly convex, i.e., $f+L/2\|\cdot\|^2$ is convex.
\end{lemma}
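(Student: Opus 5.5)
To prove Lemma~\ref{lem:weakly.convex}, set $g\coloneq f+\frac{L}{2}\|\cdot\|^2$. Since $f$ is $L$-smooth, it is differentiable, and so is $g$, with $\nabla g(x)=\nabla f(x)+Lx$. For a differentiable function, convexity is equivalent to monotonicity of the gradient. So it suffices to show that for all $x,x'\in\Re^n$,
\[
\langle \nabla g(x)-\nabla g(x'),\,x-x'\rangle\ge 0 .
\]

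First I expand this inner product:
\[
\langle \nabla g(x)-\nabla g(x'),x-x'\rangle
=\langle \nabla f(x)-\nabla f(x'),x-x'\rangle + L\|x-x'\|^2 .
\]
Next I bound the first term from below. By the Cauchy--Schwarz inequality and the $L$-Lipschitz continuity of $\nabla f$,
\[
\langle \nabla f(x)-\nabla f(x'),x-x'\rangle\ge -\|\nabla f(x)-\nabla f(x')\|\,\|x-x'\|\ge -L\|x-x'\|^2 .
\]
Adding the two terms gives a nonnegative sum, so $\nabla g$ is monotone and hence $g$ is convex. This is exactly the claim that $f$ is $L$-weakly convex.

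The only step that is not purely computational is the equivalence between convexity and gradient monotonicity, which is a standard fact. If a self-contained argument is preferred, I would prove the needed direction directly. Fix $x,x'$ and let $\varphi(t)\coloneq g(x'+t(x-x'))$. Then $\varphi'(t)=\langle\nabla g(x'+t(x-x')),x-x'\rangle$. Applying monotonicity to the points $x'+t(x-x')$ and $x'+s(x-x')$ shows that $\varphi'$ is nondecreasing in $t$. A differentiable function on an interval with nondecreasing derivative is convex. Since this holds along every line segment, $g$ is convex.
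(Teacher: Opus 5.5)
Your proof is correct and complete. The paper gives no proof of its own: it lists this lemma among well-known facts and omits the argument. So the natural comparison is with the other standard route.

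That route integrates the Lipschitz bound along the segment from $x$ to $y$ to get the lower quadratic estimate $f(y)\ge f(x)+\langle\nabla f(x),y-x\rangle-\frac{L}{2}\|y-x\|^2$. Adding $\frac{L}{2}\|y\|^2$ to both sides and using $\frac{L}{2}\|y\|^2-\frac{L}{2}\|y-x\|^2=\frac{L}{2}\|x\|^2+L\langle x,y-x\rangle$, this rearranges exactly into the supporting-hyperplane characterization $g(y)\ge g(x)+\langle\nabla g(x),y-x\rangle$.

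Your monotone-gradient argument avoids the integral. It needs only Cauchy--Schwarz plus the one-dimensional fact that a differentiable function with nondecreasing derivative is convex. You reduced to that fact correctly: the key identity is $\langle\nabla g(p_t)-\nabla g(p_s),p_t-p_s\rangle=(t-s)\bigl(\varphi'(t)-\varphi'(s)\bigr)$ with $p_t\coloneq x'+t(x-x')$.

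Both arguments use the Lipschitz bound only along segments. Hence they show that $g$ is convex on any convex set on which $\nabla f$ is $L$-Lipschitz, provided $f$ is differentiable on an open set containing it. That local version is what the paper actually needs in Proposition~\ref{prop:new}, where $\gamma^\omega$ is only assumed $C^2$ on a bounded open set containing $X\times Y$.
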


\begin{lemma}\label{lem:smooth.lipschitz}
	Suppose that $f$ is $L$-smooth on a compact set $X$.
	Then, $\nabla f$ is bounded and $f$ is Lipschitz continuous.
\end{lemma}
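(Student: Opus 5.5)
The statement is Lemma~\ref{lem:smooth.lipschitz}. I read ``$L$-smooth on a compact set $X$'' as: $f$ is continuously differentiable on an open set containing $X$, and $\|\nabla f(x)-\nabla f(x')\|\le L\|x-x'\|$ for all $x,x'$ there. The plan is first to bound $\nabla f$ on $X$ and then to convert that bound into a Lipschitz estimate via the mean value theorem.

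For boundedness, fix any $x_0\in X$. For every $x\in X$, the triangle inequality and $L$-smoothness give
\[
\|\nabla f(x)\|\le \|\nabla f(x_0)\|+L\|x-x_0\|\le \|\nabla f(x_0)\|+L\,\mathrm{diam}(X)\eqqcolon M,
\]
which is finite because $X$ is compact and hence bounded. (Continuity of $\nabla f$ on the compact set $X$ would also give boundedness directly.)

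For the Lipschitz property, take $x,x'\in X$. If the segment $[x,x']$ lies in the region where the bound holds, then
\[
f(x)-f(x')=\int_0^1\langle \nabla f(x'+t(x-x')),\,x-x'\rangle\,dt,
\]
and so $|f(x)-f(x')|\le M\|x-x'\|$.

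The main obstacle is that $X$ need not be convex, so the segment may leave $X$. I would handle this in one of two ways.
\begin{enumerate}
  \item If $L$-smoothness holds on all of $\Re^n$, as in the paper's definition, apply the same gradient estimate on the compact convex hull $\mathrm{conv}\,X$. Its diameter equals $\mathrm{diam}(X)$, so the bound $M$ remains valid along every segment.
  \item Otherwise, use a compactness argument. Choose $r>0$ such that $f$ is $C^1$ with bounded gradient on the neighborhood $X+r\bar{\mathbb{B}}$.
  \begin{enumerate}
    \item For pairs with $\|x-x'\|<r$, the segment stays in this neighborhood, so the integral estimate applies.
    \item For pairs with $\|x-x'\|\ge r$, use $|f(x)-f(x')|\le 2\max_X|f|\le (2\max_X|f|/r)\,\|x-x'\|$, where $\max_X|f|$ is finite by continuity of $f$ on the compact set $X$.
  \end{enumerate}
  Taking the larger of the two constants gives a global Lipschitz constant on $X$.
\end{enumerate}
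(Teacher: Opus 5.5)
Your proof is correct. The paper gives no proof of this lemma; it lists it among the ``well-known results'' whose proofs are omitted. What you wrote is the standard argument the paper implicitly relies on: bound $\nabla f$ on $X$, then integrate $\nabla f$ along segments.

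Your attention to nonconvex $X$ is warranted, and stating your reading of the hypothesis is necessary. Without global smoothness, or $C^1$ on an open neighborhood of $X$, the conclusion can fail for nonconvex compact sets. Under the paper's own global definition of $L$-smoothness, your first option (working on $\mathrm{conv}\,X$, whose diameter equals $\mathrm{diam}(X)$) already suffices. Moreover, wherever the paper uses the lemma, the underlying set ($X\times Y$ or $W^\nu=X^\nu\times Y$) is convex by Assumption~\ref{asmp}, so the segment issue never arises there.
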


\begin{lemma}\label{lem:C2.imply.smooth}
	Let $X\subset\Re^n$ be a compact set.
	Let $f\colon\Re^n\to\Re$ be twice continuously differentiable over an
	open set $U$ that contains $X$.
	Then, there exists $L>0$ such that $f$ is $L$-smooth.
\end{lemma}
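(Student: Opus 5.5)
The plan is to build a single bound on the Hessian over a compact neighborhood of $X$, and then transfer it to the gradient via the mean value theorem along segments. We work in three steps.

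\emph{Step 1 (a compact neighborhood).} Since $X$ is compact and $U$ is open with $X\subset U$, the distance from $X$ to the closed set $\Re^n\setminus U$ is positive. So there is $\varepsilon>0$ with $K\coloneq X+\varepsilon\bar{\mathbb{B}}\subset U$, and $K$ is compact. (If $U=\Re^n$, any $\varepsilon$ works.)

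\emph{Step 2 (a Hessian bound).} Because $f$ is twice continuously differentiable on $U$, the map $x\mapsto\|\nabla^2 f(x)\|$ is continuous on $K$. Its maximum $L_0$ is therefore finite, and we put $L\coloneq\max\{L_0,1\}>0$.

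\emph{Step 3 (from the Hessian to the gradient).} Take $x,x'\in X$ and suppose first that the segment $[x,x']$ lies in $K$, for instance when $X$ is convex. Writing
\[
\nabla f(x)-\nabla f(x')=\int_0^1\nabla^2 f\bigl(x'+t(x-x')\bigr)(x-x')\,dt
\]
gives $\|\nabla f(x)-\nabla f(x')\|\le L\|x-x'\|$.

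The main obstacle is that the statement, read literally, asks for $L$-smoothness on all of $\Re^n$, while $f$ is only controlled on $U$. I would read it as $L$-smoothness on $X$, which is consistent with how Lemma~\ref{lem:smooth.lipschitz} is phrased, and handle pairs $x,x'$ whose segment leaves $K$ by cases.

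\emph{Case 1: $\|x-x'\|<\varepsilon$.} Every point of the segment is within $\varepsilon$ of $x\in X$, so the segment lies in $K$ and Step 3 applies.

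\emph{Case 2: $\|x-x'\|\ge\varepsilon$.} By continuity, $\nabla f$ is bounded on the compact set $X$, say by $G$. Then
\[
\|\nabla f(x)-\nabla f(x')\|\le 2G\le \frac{2G}{\varepsilon}\,\|x-x'\|.
\]

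Finally, replacing $L$ by $\max\{L,2G/\varepsilon\}$ yields one constant that covers both cases, which completes the proof.
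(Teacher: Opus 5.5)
Your proof is correct. The paper gives no proof of this lemma; it lists it among the ``well-known results'' whose proofs are omitted. Your argument is the standard one: bound $\nabla^2 f$ on the compact enlargement $X+\varepsilon\bar{\mathbb{B}}\subset U$, integrate along segments for pairs closer than $\varepsilon$, and use the crude bound $2G/\varepsilon$ for pairs farther apart.

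Reading the conclusion as $L$-smoothness on $X$ is the right interpretation. The literal global statement fails even when $U=\Re^n$: take $f(x)=x^4$ on $\Re$ with $X=[0,1]$. The restricted version is also how the paper uses the lemma in Lemma~\ref{lem:smooth.lipschitz} and Proposition~\ref{prop:new}.

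In the paper's application the set $X\times Y$ is convex, so your Step~3 applies directly. The case split is only needed for nonconvex compact $X$.
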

Next, we introduce the subanalyticity along with other relevant concepts.
\begin{definition}[Subanalyticity~{\cite[Definition~2.1]{Bolte2007}}]\label{def:subanalyticity}
	\mbox{\\}
	\begin{enumerate}
		\item A subset $A$ of $\Re^n$ is called \emph{semianalytic} if each point of $\Re^n$ admits a neighborhood $V$
			for which $A\cap V$ assumes the following form:
			$$
			\bigcup_{i=1}^p \bigcap_{j=1}^q \{x\in V\mid f_{ij}(x)=0, g_{ij}(x)>0\},
			$$
			where the functions $f_{ij},g_{ij}\colon V\to\Re$ are real-analytic for all $i\in\{1,\dots,p\}$ and
			$j\in\{1,\dots,q\}$.
		\item The set $A$ is called \emph{subanalytic} if each point of $\Re^n$ admits a neighborhood $V$ such that
			$$
			A\cap V=\{x\in \Re^n\mid (x,y)\in B\},
			$$
			where $B$ is a bounded semianalytic subset of $\Re^n\times\Re^m$ for some $m\ge 1$.
		\item Given $m,n\in\mathbb{N}$, a function $f\colon\Re^n\to\Re\cup\{\infty\}$ (respectively,
			a point-to-set operator $T\colon\Re^n\rightrightarrows\Re^m$) is called 
			\emph{subanalytic} if its graph is subanalytic subset of $\Re^n\times\Re$
			(respectively, of $\Re^n\times\Re^m$).
	\end{enumerate}
\end{definition}

\begin{definition}[{Global subanalyticity~\cite{Dries1996}}]\label{def:global.subanalyticity}
	Let $x\in\Re^n$, and
	define the function
	$$
	\Phi_n(x) \coloneq 
	\begin{pmatrix}
		\frac{x_1}{1+x_1^2},\dots,\frac{x_n}{1+x_n^2}
	\end{pmatrix}.
	$$
	\begin{enumerate}
		\item A subset $A$ of $\Re^n$ is called \emph{globally subanalytic} if its image under $\Phi_n$ is a 
			subanalytic subset of $\Re^n$;
		\item A function $f\colon\Re^n\to\Re\cup\{\infty\}$ (respectively, a point-to-set operator
			$T\colon\Re^n\rightrightarrows\Re^m$) is called \emph{globally subanalytic} if 
			$\graph f$ (respectively, $\graph T$) is a globally subanalytic subset of $\Re^n\times\Re$ (respectively, of $\Re^n\times\Re^m$).
	\end{enumerate}
\end{definition}

Loosely speaking, 
a (globally) subanalytic function is a function described as a combination of locally analytic functions.
Subanalyticity may be generally satisfied by many widely-used objective functions~\cite{Bolte2007}.
They exhibit a ``tame'' geometry, thus stability under basic operations, and desirable properties for
optimization~\cite{Chen202567l}.
One of them is the H\"olderian error bound defined later.

\begin{lemma}[{Bolte et al.~\cite{Bolte2007}}]\label{lem:globally.subanalytic}
	Globally subanalytic sets are subanalytic, and any bounded subanalytic set is globally subanalytic.
\end{lemma}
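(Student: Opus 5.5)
The statement has two parts: globally subanalytic sets are subanalytic, and bounded subanalytic sets are globally subanalytic. Both parts will come from three ingredients. The first is that $\Phi_n$ is a real-analytic diffeomorphism of $\Re^n$ onto the bounded open box $(-\tfrac12,\tfrac12)^n$. The second is that subanalyticity is preserved under images and preimages by proper analytic maps, and is a local notion. The third is that a set is subanalytic if its intersection with every member of a locally finite family of open sets is subanalytic.

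For the first claim, let $A$ be globally subanalytic, so $\Phi_n(A)$ is subanalytic. Fix $\bar x\in\Re^n$ and a bounded open neighborhood $V$ of $\bar x$. Then $\Phi_n(\bar V)$ is a compact subset of the open box, and on a neighborhood of it $\Phi_n^{-1}$ is real-analytic; componentwise it is $t\mapsto (1-\sqrt{1-4t^2})/(2t)$, extended analytically at $t=0$.

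We have $A\cap V=\Phi_n^{-1}(\Phi_n(A)\cap\Phi_n(V))$. The set $\Phi_n(A)\cap \Phi_n(V)$ is subanalytic and relatively compact inside the box, since intersections of subanalytic sets are subanalytic. Its image under an analytic map defined on a neighborhood of its closure is therefore subanalytic, because images of relatively compact subanalytic sets under analytic maps are subanalytic. Hence $A\cap V$ is subanalytic for every bounded open $V$, and by locality $A$ is subanalytic.

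For the second claim, let $A$ be bounded and subanalytic. Then $\bar A$ is compact, and $\Phi_n$ is analytic on all of $\Re^n$. So $\Phi_n(A)$ is the image of a relatively compact subanalytic set under an analytic map, hence subanalytic. By the definition, $A$ is then globally subanalytic.

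The main obstacle is the point where the paper's Definition~\ref{def:subanalyticity} makes subanalyticity local at every point of $\Re^n$, including points on the boundary of the box $(-\tfrac12,\tfrac12)^n$, where $\Phi_n^{-1}$ blows up. To handle this, I would invoke the standard facts from Bierstone--Milman / van den Dries--Miller rather than re-derive them, namely the proper-image theorem, closure under finite intersection, and locality. I would take care that each map is applied only to relatively compact pieces: in the first claim, $\Phi_n(\bar V)$ stays strictly inside the box, so the boundary behaviour never enters.
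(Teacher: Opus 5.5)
There is a genuine problem with your first ingredient: it is false for the map actually written in Definition~\ref{def:global.subanalyticity}.

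\textbf{Why the map is not a chart.} The scalar map $f(t)=t/(1+t^2)$ satisfies $f(t)=f(1/t)$ for $t\neq 0$, so it is not injective. Its range is the closed interval $[-\tfrac12,\tfrac12]$, attained at $t=\pm 1$. Your formula $s\mapsto (1-\sqrt{1-4s^2})/(2s)$ inverts only $f|_{[-1,1]}$, and it has infinite derivative at $s=\pm\tfrac12$. Consequences:
\begin{itemize}
\item Your key identity $A\cap V=\Phi_n^{-1}(\Phi_n(A)\cap\Phi_n(V))$ fails. For $n=1$, $A=\{2\}$, $V=(0,1)$, the left side is empty. But $\Phi_1(2)=\tfrac25=\Phi_1(\tfrac12)\in\Phi_1(V)$, so the right side contains $\tfrac12$.
\item $\Phi_n(\bar V)$ touches the boundary of the box whenever $\bar V$ meets $\{|x_i|=1\}$. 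The boundary corresponds to $|x_i|=1$, while $|x_i|\to\infty$ is sent to $s_i\to 0$. So your ``main obstacle'' discussion is aimed at the wrong place.
\end{itemize}

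\textbf{This cannot be patched within the literal definition}, because the first claim is then false. For $n=1$ and $A=[-1,1]\cup(\mathbb{Q}\cap[2,3])$, one gets $\Phi_1(A)=[-\tfrac12,\tfrac12]$, which is subanalytic. But $A$ is not subanalytic: near $\tfrac52$ it is not a finite union of points and intervals.

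\textbf{The fix.} The statement needs a bijective chart, as in the standard definition, e.g.\ $\Phi_n(x)=(x_i/\sqrt{1+x_i^2})_{i=1}^n$. This is an analytic semialgebraic diffeomorphism of $\Re^n$ onto $(-1,1)^n$, with analytic inverse $s\mapsto s/\sqrt{1-s^2}$. With that map and the box $(-1,1)^n$, your argument for the first claim is the standard one and goes through. One small repair is needed: take $V$ to be an open ball, so that $\Phi_n(V)$ is subanalytic and the intersection step is justified; an arbitrary bounded open set need not be subanalytic.

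Your argument for the second claim uses only that $\Phi_n$ is analytic on all of $\Re^n$ and that $A$ is relatively compact, so it is valid for either map.

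The paper gives no proof of this lemma and only cites the literature, so the defect originates in its displayed formula for $\Phi_n$. Still, your proposal asserts a false property of that formula instead of detecting the problem.
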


\begin{lemma}[{Dries and Miller~\cite{Dries1996}}]\label{lem:image.subanalytic}
	The image or the preimage of a globally subanalytic set by a globally subanalytic function (respectively,
	globally subanalytic multivalued operator) is globally subanalytic.
\end{lemma}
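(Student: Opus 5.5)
This is a classical result from o-minimal geometry, namely that globally subanalytic sets form an o-minimal structure. The plan is therefore to reduce it to the two structural facts behind it rather than rebuild the theory.

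\textbf{Step 1: transport through the chart.} Set $\Phi=\Phi_n$. On each coordinate, $t\mapsto t/(1+t^2)$ is a real-analytic map onto $[-1/2,1/2]$. I would first check that it is injective on the relevant range; if it is not, I will replace it by the standard bijective analytic chart $t\mapsto t/\sqrt{1+t^2}$ of \cite{Dries1996}, which is equivalent for the definition. With this chart, $A\subset\Re^n$ is globally subanalytic exactly when $\Phi(A)$ is a subanalytic subset of a bounded box. By Lemma~\ref{lem:globally.subanalytic}, the bounded subanalytic sets coincide with the bounded globally subanalytic ones. The upshot is that globally subanalytic sets correspond to subanalytic subsets of a compact cube, and functions and multivalued operators correspond through their graphs.

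\textbf{Step 2: image case.} Let $f\colon\Re^n\to\Re^m$ (or $T$) be globally subanalytic and $A\subset\Re^n$ globally subanalytic. Then
$$f(A)=\pi\bigl(\graph f\cap(A\times\Re^m)\bigr),$$
where $\pi$ is the coordinate projection onto the last $m$ coordinates. The product $A\times\Re^m$ is globally subanalytic, since $\Phi_{n+m}$ acts coordinatewise and products of subanalytic sets are subanalytic. Intersections are preserved, because subanalytic sets form a Boolean algebra locally and $\Phi$ is a bijection. For the projection, I would transport it to the chart: $\Phi_m\circ\pi=\pi\circ\Phi_{n+m}$. The key input is then Gabrielov/Hironaka: the projection of a \emph{bounded} subanalytic set is subanalytic. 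Boundedness is exactly what the chart supplies, since $\Phi_{n+m}(\cdot)$ lies in a bounded cube; this is why plain subanalyticity would not suffice. The same argument works verbatim for $\graph T$.

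\textbf{Step 3: preimage case.} Write
$$f^{-1}(B)=\pi'\bigl(\graph f\cap(\Re^n\times B)\bigr),$$
with $\pi'$ the projection onto the first $n$ coordinates, and repeat Step 2. The main obstacle is the bounded projection theorem, which I would cite from \cite{Dries1996} (via Gabrielov) rather than reprove. The rest is bookkeeping: checking that $\Phi$ commutes with products and projections and that it is a homeomorphism onto its image.
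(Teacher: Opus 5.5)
Your route is the standard argument behind the Dries--Miller result that the paper cites without proof: pass through the chart to bounded subanalytic sets, use closure under products, intersections and projections, and write $f(A)=\pi\bigl(\mathrm{gph}\,f\cap(A\times\mathbb{R}^m)\bigr)$, and similarly for preimages and for $T$. Steps~2--3 are fine once the chart is a bijection. One minor correction: that the projection of a \emph{bounded} subanalytic set is subanalytic follows directly from the definition, by covering its compact closure with finitely many local representations. Gabrielov's theorem concerns complements and is not needed here.

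The real problem is the unresolved hedge in Step~1, and the claim you attach to it is false. The map $\psi(t)=t/(1+t^2)$ is not injective: $\psi(t)=\psi(s)$ iff $(t-s)(1-ts)=0$, so $\psi(t)=\psi(1/t)$. Consequently the literal definition in Definition~\ref{def:global.subanalyticity} is \emph{not} equivalent to the one using $t/\sqrt{1+t^2}$. It admits strictly more sets, and under it the lemma is false.

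Here is a counterexample. Take $C=\{3^{-k}:k\ge 1\}$, $D=(0,1)\setminus C$, $A=C\cup\{1/t:t\in D\}$ and $f(x)=2x$.
\begin{itemize}
\item $\Phi_1(A)=\psi(C\cup D)=(0,1/2)$, which is subanalytic.
\item $\Phi_2(\mathrm{gph}\,f)$ is subanalytic: it is the union of the images of bounded intervals under $s\mapsto(\psi(s),\psi(2s))$ and $s\mapsto(\psi(s),\psi(s/2))$.
\item Yet $\Phi_1(f(A))\cap(0,2/5)=\psi\bigl((0,1/2)\setminus\{3^{-k}/2:k\ge 1\}\bigr)$. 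This is an interval with a sequence accumulating at $0$ removed, which is not subanalytic.
\end{itemize}
This $A$ is not even subanalytic, so Lemma~\ref{lem:globally.subanalytic} also fails under the literal definition.

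So you should state explicitly that you work with the standard notion. That means a bijective semialgebraic chart onto a bounded box, such as coordinatewise $t\mapsto t/\sqrt{1+t^2}$, or equivalently definability in $\mathbb{R}_{\mathrm{an}}$. Say that you are correcting a misprint in the paper's $\Phi_n$, not invoking an equivalence. Only then is the injectivity you use for $\Phi(A\cap B)=\Phi(A)\cap\Phi(B)$ available.
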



\begin{definition}[H\"olderian error bound]\label{def:Holderian.EB}
	For the function $f\colon X\to\Re$, let $X^*\coloneq\argmin_{x\in X} f(x)$.
	Then, we say that $f$ satisfies the $(\xi,\eta)$-H\"olderian error bound (HEB) if
	$$
	f(x)-\min_{x\in X} f(x)\ge \xi\cdot \dist(x,X^*)^\eta\qquad \forall x\in X,
	$$
	where $\xi,\eta>0$.
\end{definition}

If the HEB condition holds for a function, it measures the distance in a certain sense between the solution set
to the optimization problem by the gap of objective function values.

The following lemma ensures that if a solution set and objective function are globally subanalytic,
the objective function satisfies the HEB for some~$(\xi,\eta)$.

\begin{lemma}[{\L}ojasiewicz factorization lemma~\cite{Bierstone1988}]\label{lem:factorization.lemma}
	If the solution set $X^*\coloneq\argmin_{x\in X} f(x)$ is globally subanalytic, and $f$ is continuous and globally subanalytic,
	then $f$ satisfies H\"olderian error bound.
\end{lemma}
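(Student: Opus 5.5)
To prove Lemma~\ref{lem:factorization.lemma} I would reduce it to the classical {\L}ojasiewicz inequality for continuous subanalytic functions, applied to two functions on $X$. The first is $g(x)\coloneq f(x)-\min_{x\in X}f(x)$, which is nonnegative on $X$ and vanishes exactly on $X^*$. The second is $h(x)\coloneq\dist(x,X^*)$. I take $X$ to be compact, as is implicit in the paper's setting, so that $\min_X f$ exists. Both functions are continuous on $X$ and share the zero set $X^*$.

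\textbf{Step 1: global subanalyticity of $g$ and $h$.} The graph of $g$ is obtained from $\graph f$ by the translation $(x,\lambda)\mapsto(x,\lambda-c)$ with $c=\min_X f$. This translation is an affine, hence globally subanalytic, map, so $g$ is globally subanalytic by Lemma~\ref{lem:image.subanalytic}. For $h$, note that
\[
\graph h=\{(x,t)\mid t\ge 0,\ \exists y\in X^*:\ \|x-y\|\le t,\ \forall y\in X^*:\ \|x-y\|\ge t\}.
\]
This set is defined by quantifiers over globally subanalytic sets with a semialgebraic norm. It is globally subanalytic because globally subanalytic sets form an o-minimal structure: they are closed under projections (Lemma~\ref{lem:image.subanalytic}) and under complements, which handle the universal quantifier.

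\textbf{Step 2: the {\L}ojasiewicz inequality.} The classical statement (Bierstone--Milman) reads: if $g,h$ are continuous subanalytic functions on a compact subanalytic set $K$ with $g^{-1}(0)\subset h^{-1}(0)$, then there exist $c>0$ and $\alpha>0$ with $|g(x)|\ge c\,|h(x)|^{\alpha}$ on $K$. Lemma~\ref{lem:globally.subanalytic} lets me pass from global subanalyticity to the subanalyticity this statement requires.

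I would apply it with $K=X$ and the functions of Step~1. Their zero sets coincide, and $g\ge0$, so the inequality gives
\[
f(x)-\min_X f\ge c\,\dist(x,X^*)^{\alpha}\qquad\forall x\in X.
\]
This is exactly the $(\xi,\eta)$-H\"olderian error bound of Definition~\ref{def:Holderian.EB}, with $\xi=c$ and $\eta=\alpha$.

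\textbf{Main obstacle.} The delicate point is the passage from a local inequality to a global one. The factorization lemma is proved locally: $h^{N}=\phi\, g$ near each point, with $\phi$ continuous. Compactness of $X$ then yields a finite cover with uniform constants $c$ and $\alpha$, taking the minimum of the $c$'s and the maximum of the exponents, after rescaling using the boundedness of $h$. If $X$ were unbounded, I would instead first compose with $\Phi_n$ to move to a bounded set; this is precisely what global subanalyticity is designed to allow. Even then, a uniform exponent may fail at infinity, so I would state the bound on compact $X$.
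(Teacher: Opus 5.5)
Your argument is correct and is the standard derivation behind the paper's bare citation of Bierstone--Milman: apply their {\L}ojasiewicz inequality on the compact subanalytic set $X$ to $f-\min_X f$ and $\dist(\cdot,X^*)$, whose zero sets coincide. The local-to-global patching in your last paragraph is unnecessary, since that theorem is already stated on compact sets. Your added compactness hypothesis is genuinely necessary, not a convenience: $f(x)=x^2/(1+x^4)$ on $X=\Re$ is semialgebraic with $X^*=\{0\}$ yet satisfies no H\"olderian error bound. It also matches how the paper actually uses the lemma, on the compact set $X\times Y$.
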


\begin{lemma}[{Kosiba~\cite[Lemma~4.19]{Kosiba2025}}]
	\label{lem:gl.subanalytic.value.fn}
	Let $X$ and $Y$ be bounded and globally subanalytic subsets in $\Re^n$ and $\Re^m$, respectively.
	Suppose that $f\colon X\times Y\to\Re$ is a bounded subanalytic function.
	Then, the value-function defined by $\phi(x)\coloneq\min_{y\in Y} f(x,y)$ is bounded and subanalytic,
	thus globally subanalytic.
\end{lemma}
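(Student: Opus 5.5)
The statement to prove is Lemma~\ref{lem:gl.subanalytic.value.fn}: if $X$ and $Y$ are bounded and globally subanalytic and $f$ is bounded and subanalytic on $X\times Y$, then $\phi(x)=\min_{y\in Y}f(x,y)$ is bounded, subanalytic, and hence globally subanalytic. I would prove it by describing $\operatorname{gph}\phi$ through projections and complements of globally subanalytic sets.

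\textbf{Boundedness.} Since $|f|\le M$ on $X\times Y$ and the minimum is attained, we get $|\phi(x)|\le M$ for every $x\in X$.

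\textbf{Globally subanalytic graph.} Because $f$ is bounded and $X\times Y$ is bounded, $\operatorname{gph} f\subset X\times Y\times[-M,M]$ is a bounded subanalytic set. By Lemma~\ref{lem:globally.subanalytic} it is therefore globally subanalytic. I would then write the graph of $\phi$ as
$$
\operatorname{gph}\phi=\{(x,t)\mid \exists y\in Y:\ (x,y,t)\in\operatorname{gph} f\}\setminus\{(x,t)\mid \exists y\in Y:\ f(x,y)<t\}.
$$

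\textbf{Handling each piece.} The first set is the image of $\operatorname{gph} f$ under the coordinate projection $(x,y,t)\mapsto(x,t)$. This projection is polynomial, hence globally subanalytic, so Lemma~\ref{lem:image.subanalytic} makes the image globally subanalytic.

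The second set is the projection of
$$
\{(x,y,s,t)\mid (x,y,s)\in\operatorname{gph} f,\ s<t,\ t\in[-M,M]\}.
$$
This is the intersection of a product of $\operatorname{gph} f$ with an interval and a semialgebraic set. Restricting $t$ to $[-M,M]$ loses nothing, since the graph of $\phi$ lies in $X\times[-M,M]$. So this set is bounded and globally subanalytic, and its projection is again globally subanalytic by Lemma~\ref{lem:image.subanalytic}.

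The last point is that globally subanalytic sets form an o-minimal structure (van den Dries--Miller~\cite{Dries1996}). They are therefore closed under finite intersections, unions, complements and products. Taking the difference above gives that $\operatorname{gph}\phi$ is globally subanalytic.

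\textbf{Conclusion.} Lemma~\ref{lem:globally.subanalytic} then yields that $\phi$ is subanalytic. Being bounded, it is also globally subanalytic by the same lemma.

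\textbf{Main obstacle.} The step I expect to be delicate is the closure under complement and intersection. The paper has only stated stability under images and preimages, so I would invoke the o-minimality of the globally subanalytic structure directly from~\cite{Dries1996}. Alternatively, I would note that $\Phi_n$ maps everything into a bounded region, where the Gabrielov complement theorem for bounded subanalytic sets applies.
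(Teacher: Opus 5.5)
The paper gives no proof of this lemma. It imports the result verbatim from \cite[Lemma~4.19]{Kosiba2025}, so your argument is a genuinely different, self-contained route, and it is correct.

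Your proof has two stages. You first upgrade $\mathrm{gph}\, f$ from subanalytic to globally subanalytic via Lemma~\ref{lem:globally.subanalytic}. This is exactly where the boundedness of $X$, $Y$ and $f$ is used, since projections of unbounded subanalytic sets need not be subanalytic. You then exhibit $\mathrm{gph}\,\phi$ as a first-order definable set built from $\mathrm{gph}\, f$ by products, intersections with semialgebraic sets, coordinate projections and one set difference. What you actually need from \cite{Dries1996} is that the globally subanalytic sets form a structure: Boolean algebras containing the semialgebraic sets and stable under products and coordinate projections, with the complement part resting on Gabrielov's theorem. O-minimality itself plays no role.

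Compared with the bare citation, your route shows more clearly what the hypotheses do.
\begin{itemize}
\item Attainment of the minimum enters only through your formula for the graph. For an infimum you could use $\{(x,t)\mid x\in X,\ f(x,y)\ge t\ \forall y\in Y,\ \forall\varepsilon>0\ \exists y\in Y\colon f(x,y)<t+\varepsilon\}$, which is equally definable.
\item You obtain global subanalyticity of $\phi$ directly. Your final step (subanalytic and bounded, hence globally subanalytic) is therefore redundant.
\item The cost is closure under complements. It is not among Lemmas~\ref{lem:globally.subanalytic} and~\ref{lem:image.subanalytic} and must be imported separately, as you acknowledge.
\end{itemize}

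One caution concerns your fallback argument. With $\Phi_n$ literally as in Definition~\ref{def:global.subanalyticity}, the map $t\mapsto t/(1+t^2)$ is not injective: it identifies $t$ and $1/t$. Hence $\Phi_n(\Re^n\setminus A)$ need not equal $\Phi_n(\Re^n)\setminus\Phi_n(A)$, and the reduction to Gabrielov's theorem for bounded sets breaks down. You need the standard homeomorphism $t\mapsto t/\sqrt{1+t^2}$ onto $(-1,1)$, which is presumably what the paper intends.
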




\subsection{Nash equilibrium problem}

Consider an $N$-person noncooperative game~\cite{Nash1950}.
Player labeled with $\nu\in\{1,\dots,N\}$ solves
\begin{align}\label{prob:player}
	\min_{x^\nu\in\Re^{n_\nu}}\quad \theta^\nu(x^\nu,x^{-\nu})\qquad
	\text{s.t}\quad x^\nu\in X^\nu,
\end{align}
where $x^\nu\in\Re^{n_\nu}$ denotes a strategy for player $\nu$, and 
$\theta^\nu\colon\Re^n\to\Re$ and $X^\nu\subset\Re^{n_\nu}$ denote their cost function and 
strategy space, respectively.
Here, $n\coloneq n_1+\dots+n_N$.

\begin{definition}[Nash equilibrium]\label{def:Nash.eq}
	We say a tuple of strategies $x^*\coloneq (x^{*,1},\dots,x^{*,N})\in
	 X\coloneq X^1\times\dots\times X^N$ is a \emph{Nash equilibrium} (NE)
	of the noncooperative game in which player $\nu\in\{1,\dots,N\}$ solves~\eqref{prob:player} if it satisfies
	the following inequality for all $\nu$:
	$$
	\theta^\nu(x^{*,\nu},x^{*,-\nu})\le \theta^\nu(x^\nu,x^{*,-\nu})\qquad \forall x^\nu\in X^\nu.
	$$
\end{definition}
The NE means that nobody can reduce their cost function by only changing their strategy unilaterally.
We refer to the problem to find the NE of the noncooperative game
as a \emph{Nash equilibrium problem} (NEP).
The following proposition ensures the existence of NE for the NEP. 
For a proof, we refer the reader to the literature, e.g., Aubin~\cite{Aubin1979}.

\begin{proposition}[Existence of Nash equilibrium]\label{prop:existence.NE}
	For all $\nu\in\{1,\dots,N\}$, suppose that $X^\nu\subset\Re^{n_\nu}$ is nonempty, convex, and compact.
	Suppose also that $\theta^\nu$ is continuous, and 
	$\theta^\nu(\cdot,x^{-\nu})$ is convex for arbitrary fixed $x^{-\nu}$.
    Then, the NEP has an NE.
\end{proposition}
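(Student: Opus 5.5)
The plan is to reduce the existence of a Nash equilibrium to a fixed point of a set-valued best-response map and apply Kakutani's fixed point theorem on the compact convex set $X\coloneq X^1\times\dots\times X^N$. For each $\nu$ and each $x^{-\nu}\in X^{-\nu}\coloneq\prod_{\mu\neq\nu}X^\mu$, I define
$$
B^\nu(x^{-\nu})\coloneq\argmin_{y^\nu\in X^\nu}\theta^\nu(y^\nu,x^{-\nu}),
$$
and then the product map $B(x)\coloneq B^1(x^{-1})\times\dots\times B^N(x^{-N})$ from $X$ into $X$. By Definition~\ref{def:Nash.eq}, a point $x^*\in X$ with $x^*\in B(x^*)$ is exactly a Nash equilibrium.

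The values of $B$ have the properties Kakutani requires.
\begin{itemize}
\item \emph{Nonempty:} $\theta^\nu(\cdot,x^{-\nu})$ is continuous on the nonempty compact set $X^\nu$, so Weierstrass gives a minimizer.
\item \emph{Convex:} $\theta^\nu(\cdot,x^{-\nu})$ is convex and $X^\nu$ is convex, so its set of minimizers is convex (a sublevel set at the minimum value).
\item \emph{Closed:} this follows from continuity.
\end{itemize}
Products preserve all three properties, so each $B(x)$ is nonempty, convex and compact.

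The main step, and the only real obstacle, is the closed-graph (upper semicontinuity) property. Take $x_k\to x$ in $X$ and $y_k\in B(x_k)$ with $y_k\to y$. For every $z^\nu\in X^\nu$ we have
$$
\theta^\nu(y_k^\nu,x_k^{-\nu})\le\theta^\nu(z^\nu,x_k^{-\nu}).
$$
Since $\theta^\nu$ is jointly continuous on $\Re^n$, passing to the limit gives $\theta^\nu(y^\nu,x^{-\nu})\le\theta^\nu(z^\nu,x^{-\nu})$. Moreover $y^\nu\in X^\nu$ because $X^\nu$ is closed, so $y^\nu\in B^\nu(x^{-\nu})$. Joint continuity, not merely continuity in each variable separately, is exactly what makes this limit step valid. (Equivalently, this is Berge's maximum theorem applied with the constant constraint map $X^\nu$.)

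Since $B$ has closed graph, it is upper semicontinuous with nonempty, compact, convex values on the nonempty, compact, convex set $X\subset\Re^n$. Kakutani's theorem then yields $x^*\in B(x^*)$, which is a Nash equilibrium. As an alternative, one could apply Brouwer's theorem to a regularized best response, for example the unique minimizer of $\theta^\nu(\cdot,x^{-\nu})+\frac{\epsilon}{2}\|\cdot-x^\nu\|^2$, and pass $\epsilon\to0$. However, the Kakutani route is more direct.
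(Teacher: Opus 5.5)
Your proof is correct. The best-response correspondence has nonempty, convex, compact values, joint continuity gives the closed graph, and Kakutani's theorem yields a fixed point, which is an NE by Definition~\ref{def:Nash.eq}.

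The paper gives no argument of its own and only cites Aubin. The proof in that literature, which is also the one suggested by the Nikaido--Isoda function introduced right after the proposition, works with a scalar function rather than a set-valued map. The function $\Psi(x,z)=\sum_{\nu}[\theta^\nu(x^\nu,x^{-\nu})-\theta^\nu(z^\nu,x^{-\nu})]$ is concave in $z$, which is where convexity of $\theta^\nu(\cdot,x^{-\nu})$ enters. It is also lower semicontinuous in $x$ and vanishes on the diagonal. Ky Fan's minimax inequality then gives $\bar{x}\in X$ with $\sup_{z\in X}\Psi(\bar{x},z)\le\sup_{z\in X}\Psi(z,z)=0$, i.e.\ $V(\bar{x})=0$, which is the paper's characterization of an NE.

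Both fixed-point tools ultimately rest on Brouwer's theorem, so neither route is deeper. Yours is the familiar best-response argument and exhibits the equilibrium directly as a fixed point. The Ky Fan route avoids set-valued maps entirely and needs only lower semicontinuity of $x\mapsto\Psi(x,z)$. That holds, for example, when $\sum_\nu\theta^\nu$ is lower semicontinuous and each $\theta^\nu(z^\nu,\cdot)$ is upper semicontinuous, so the individual $\theta^\nu$ need not be jointly continuous. It also ties directly to the gap function $V$ the paper uses afterwards.

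A small remark on your Brouwer alternative: no limit $\epsilon\to 0$ is needed. Suppose $x^\nu$ minimizes $\theta^\nu(\cdot,x^{-\nu})+\frac{\epsilon}{2}\|\cdot-x^\nu\|^2$ over $X^\nu$. Comparing with $x^\nu+t(y^\nu-x^\nu)$, using convexity, dividing by $t$ and letting $t\downarrow 0$ shows that $x^\nu$ already minimizes $\theta^\nu(\cdot,x^{-\nu})$ over $X^\nu$. This is exactly the convex-case equivalence, noted in the paper, between $V_\mu(x^*)=0$ and $x^*$ being an NE.
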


Now we introduce the \emph{Nikaido--Isoda} (NI) \emph{function}
(sometimes called the \emph{Ky-Fan function}) for the NEP as follows:
\[
	\Psi(x,z) \coloneq \sum_{\nu=1}^N
	\left[\theta^\nu(x^\nu,x^{-\nu})-\theta^\nu(z^\nu,x^{-\nu})\right],
\]
We also introduce the \emph{value-function} or \emph{gap function}
for the NEP defined by
\[
	V(x) \coloneq \sup_{z\in X} \Psi(x,z).
\]
It is easy to see that $x^*\in X$ is an NE if and only if $V(x^*)=0$.

Next, we introduce the \emph{regularized} NI function for the NEP as follows:
\[
	\Psi_\mu(x,z) \coloneq \sum_{\nu=1}^N
	[ \theta^\nu(x^\nu,x^{-\nu})-\theta^\nu(z^\nu,x^{-\nu}) ]
	-\frac1{2\mu}\|x-z\|^2,
\]
where $\mu>0$ is referred to as a \emph{regularization parameter},
and its value-function is $V_\mu(x) \coloneq \sup_{z\in X}\Psi_\mu(x,z)$;
we refer to $V_\mu$ as a \emph{regularized value-function}.
The regularized value-function is often used in NEPs and their generalization~\cite{Facchinei2010,Gurkan2009,Heusinger2009} which are introduced later.
In fact, those ideas are based on the Moreau--Yosida regularization or Moreau envelope of $\theta^\nu$.
When $\theta^\nu(\cdot,x^{-\nu})$ is convex for each $\nu$,
$V_{\mu}$ is differentiable,  whereas $V$ is not in general.

If each player's problem is convex, $x^*\in X$ is an NE if and only if $V_\mu(x^*)=0$.
Otherwise, this equivalency is not true as illustrated in the following example.
\begin{example}\label{ex:nonconvex.NEP}
	Consider a two-person NEP, in which
	\[
		\theta^1(x_1,x_2)\coloneq -x_1^2x_2,\ 
		\theta^2(x_1,x_2)\coloneq x_1x_2^2,\ 
		X^1=X^2\coloneq [0,1].
	\]
	For the regularization parameter $\mu=1/2$,  it holds that
	\[
		\begin{aligned}
		V_\mu(0,1) &= \theta^1(0,1)+\theta^2(0,1)-
		\min_{z_1\in[0,1]}\{\theta^1(z_1,1)+(z_1-0)^2\}\\
		&\quad-\min_{z_2\in[0,1]}\{\theta^2(0,z_2)+(z_2-1)^2\} = 0 + 0 - 0 - 0 = 0.
		\end{aligned}
	\]
	On the other hand,
    $\theta^1(\epsilon,1)=-\epsilon^2<0=\theta^1(0,1)$ holds
	for any $\epsilon>0$, and thus, $(0,1)$ is not an NE for the NEP.
\end{example}

The generalized Nash equilibrium problem (GNEP) extends the NEP by allowing each player's strategy set to depend on the other players' strategies. More precisely, the constraint of each player $\nu$ in the GNEP is represented as $x^\nu \in X^\nu(x^{-\nu})$. Such constraints are called \emph{coupling constraints}.
In this paper, we consider a particular class of GNEP with $K\subset\Re^n$,  where each player solves 
\begin{align}\label{prob:player.GNEP}
	\min_{x^\nu\in\Re^{n_\nu}}\quad \theta^\nu(x^\nu,x^{-\nu}) \qquad
	\text{s.t.} \quad (x^\nu,x^{-\nu})\in K. 
\end{align}
In this GNEP, 
$X^\nu(x^{-\nu})=\{x^\nu\in\Re^{n_\nu}\mid (x^\nu,x^{-\nu})\in K\}$ for each $\nu$, 
which means that  all the players share the same coupling constraint.

\begin{definition}[Generalized Nash equilibrium]\label{def:GNE}
	A vector $x^*\in K$ is referred to as a \emph{generalized Nash equilibrium} (GNE) of the GNEP if 
	$x^{*,\nu}$ solves~\eqref{prob:player.GNEP} with $x^{-\nu}=x^{*,-\nu}$ for all $\nu\in\{1,\dots,N\}$.
\end{definition}


\section{Level-reduction technique for MLFGs and its penalty minimization
with regularized Nikaido--Isoda functions}
\label{sec:reformulation}
In this section, we begin by formally formulating the MLFG considered in this paper. 
Subsequently, we reformulate the MLFG as a GNEP
and propose a penalty minimization approach using the regularized NI function to solve the obtained GNEP. 

\subsection{Problem formulation}
In this paper, we consider the MLFG comprising $N$ leaders and $M$ followers.
In this MLFG, NEPs are induced on both the leaders' side and the followers' side. 
We denote  a leader $\nu$'s strategy and a tuple of their rivals' strategies  
by $x^{\nu}\in \Re^{n_{\nu}}$ and $x^{-\nu}\in \Re^{n-n_{\nu}}$ with $n \coloneq n_1+\dots+n_N$, respectively.  
We also denote 
a follower $\omega$'s strategy and a tuple of their rivals' strategies 
by $y^{\omega}\in \Re^{m_{\omega}}$ and $y^{-\omega}\in \Re^{m-m_{\omega}}$ with $m \coloneq m_1+\dots+m_M$, respectively. 
Each leader indexed by $\nu\in\{1,\dots,N\}$ solves the following problem:
\begin{align}\label{prob:leader}
	\min_{x^\nu\in\Re^{n_\nu}}\quad \theta^\nu(x^\nu,x^{-\nu},y)\qquad
	\text{s.t.}\quad x^\nu\in X^\nu,
\end{align}
where $\theta^\nu\colon\Re^{n+m}\to\Re$.
On the other hand, each follower indexed by $\omega\in\{1,\dots,M\}$ solves 
\begin{align}\label{prob:follower}
	\min_{y^\omega\in\Re^{m_\omega}} \quad \gamma^\omega(x,y^\omega,y^{-\omega}) \qquad
	\text{s.t.}\quad y^\omega\in Y^\omega,
\end{align}
where $\gamma^{\omega}\colon\Re^{n+m}\to\Re$.
Hereinafter, let
$$
X \coloneq X^1\times X^2 \times \cdots\times X^N\subset \Re^n,\
Y \coloneq Y^1\times Y^2\times \cdots \times Y^M\subset \Re^m.
$$

\subsection{Existing level-reduction approaches}

In this subsection, suppose that $Y^\omega$ is given by 
\[
Y^\omega = \{y^\omega\in\Re^{m_\omega}\mid g^\omega(y^\omega)\le 0\}
\footnote{Although we can also consider equality constraints, we omit them for simplicity.}, 
\]
where $g^\omega\colon\Re^{m_\omega}\to\Re^{p_\omega}$.
Moreover, assume that each follower's problem is convex, i.e.,~\eqref{prob:follower} is convex
for arbitrary fixed $x$ and $y^{-\omega}$, and suitable constraint qualifications hold.
The Nash game is equivalently reformulated as the following KKT systems:
\begin{align}\label{KKT.system}
\begin{cases}
\nabla_{y^\omega} \gamma^\omega(x,y^\omega,y^{-\omega}) + \nabla_{y^\omega} g^\omega(y^\omega)\lambda^\omega = 0,\\
0\le\lambda^\omega \perp g^\omega(y^\omega)\le 0
\end{cases}
\quad\forall \omega\in \{1,\dots,M\},
\end{align}
where $\lambda^\omega\in\Re^{p_\omega}$ is the Lagrange multiplier for the inequality constraint
$g^\omega(y^\omega)\le 0$, and $\lambda^\omega\perp g^\omega(y^\omega)$ means that $\langle \lambda^\omega,g^\omega(y^\omega)\rangle=0$.

Incorporating~\eqref{KKT.system} into the constraint
of each leader's problem~\eqref{prob:leader}, we have the following mathematical problem
with equilibrium constraints (MPEC): Leader $\nu$ solves
\begin{align}\label{prob:MPEC}
\min_{x^\nu,y,\lambda} \theta^\nu(x^\nu,x^{-\nu},y) \qquad \text{s.t. } x^\nu\in X^\nu,~\eqref{KKT.system}.
\end{align}
The NEP in which leader~$\nu$ solves~\eqref{prob:MPEC} is referred to  as an \emph{equilibrium problem with equilibrium constraints} (EPEC).

A typical approach to address the EPEC associated with the MLFG is based on methods for MPECs~\cite{Franci2025,Hori2019,Leyffer2010}.
Those methods are gradient-based, which, in turn, require Hessians $\nabla^2_y \gamma^\omega$ and $\nabla^2_{y^\omega}g^\omega$ or higher-order derivatives.

Specifically, if the followers' best response is unique for every $x\in X$, there exists a mapping of the response $y\colon x \mapsto y(x)$.
Then, plugging $y(x)$ into each leader's problem, the MLFG is reduced to an ordinary NEP as follows: Leader $\nu$ solves
\begin{align}\label{prob:response}
    \min_{x^\nu\in X^\nu} \quad& \theta^\nu(x^\nu,x^{-\nu},y(x^\nu,x^{-\nu})).
\end{align}
Since $y(x)$ is not necessarily smooth, 
Hori et al.~\cite{Hori2024} proposed a smoothing scheme. 
To obtain a smooth approximated response, they utilized a 
smoothing approximation of a nonlinear equation induced from the KKT systems~\eqref{KKT.system}, 
in which the complementarity conditions are replaced by the smoothing Fischer--Burmeister functions~\cite{Facchinei1999}.
Then, the reduced problem~\eqref{prob:response} is approximated by the following (differentiable) problem:
\begin{align}\label{prob:response.approx}
    \min_{x^\nu\in X^\nu} \quad& \theta^\nu(x^\nu,x^{-\nu},y_\varepsilon(x^\nu,x^{-\nu})),
\end{align}
where $y_\varepsilon(x)$ is the $y$-part of a solution to the smoothed approximated nonlinear equations for~\eqref{KKT.system},
and $\varepsilon>0$ is a smoothing parameter.
When applying first-order methods to~\eqref{prob:response.approx}, one needs to compute $\nabla y_\varepsilon (x)$,
which involves computing  $\nabla^2_{y} \gamma^\omega$ and $\nabla^2_{y^\omega} g^\omega$
via the implicit function theorem; see the appendix for details.
However, this can be computationally demanding in large-scale settings.

Consequently, both EPEC and the best response approaches require computing the Hessians of the followers' defining functions $\gamma^\omega$ and $g^\omega$.
This motivates us to develop an alternative reformulation for the MLFG.

\subsection{Level-reduction of the MLFG into a single-level GNEP}

Using the Nikaido--Isoda function, the set of NE for the followers'
NEP is represented by
\[
S(x) \coloneq \{y\in Y\mid h(x,y) \coloneq  \sup_{z\in Y} \Psi(x,y,z)\le 0\},
\]
where 
\[
	\Psi(x,y,z) \coloneq \sum_{\omega=1}^M
	[\gamma^\omega(x,y^\omega,y^{-\omega})-
	\gamma^\omega(x,z^\omega,y^{-\omega})].
\]
We refer to the function $h$ as a value-function
for the followers' NEP.
By letting 
$$\phi^\omega(x,y^{-\omega}) \coloneq 
\min_{z^\omega\in Y^\omega} \gamma^\omega(x,z^\omega,y^{-\omega}),$$
we have
\[
	h(x,y)=\sum_{\omega=1}^M [\gamma^\omega(x,y^\omega,y^{-\omega})
	-\phi^\omega(x,y^{-\omega})].
\]

Incorporating the condition $y\in S(x)$ into each leader's problem 
reduces the MLFG to
the following single-level GNEP with shared coupling constraints:
\begin{align}\label{prob:reduced.before}
\min_{x^\nu,\hat{y}^\nu} \quad \theta^\nu(x^\nu,x^{-\nu},{\hat{y}}^\nu) \qquad
\text{s.t.} \quad x^\nu\in X^\nu,\ \hat{y}^\nu\in S(x^\nu,x^{-\nu}).
\end{align}
Note that $y^{\omega}\in \Re^{m_{\omega}}$ denotes the  follower $\omega$'s strategy, 
whereas  
$\hat{y}^{\nu}\in \Re^m$ denotes a tuple of all the followers' strategies observed by leader $\nu$.
Let 
\begin{align*}
\hat{y} \coloneq (\hat{y}^1,\dots,\hat{y}^N)\in\Re^{Nm},\ w^\nu \coloneq (x^\nu,\hat{y}^\nu)\in\Re^{n_\nu+m},
\end{align*}
and 
$$
W^\nu_0(x^{-\nu}) \coloneq \{w^\nu\in\Re^{n_\nu+m}\mid x^\nu\in X^\nu, \hat{y}^\nu\in S(x^\nu,x^{-\nu})\}.
$$
With these notations,~\eqref{prob:reduced.before} is simply rewritten as
\begin{align}\label{prob:reduced}
	\min_{w^\nu\in\Re^{n_\nu+m}} \quad \theta^\nu(w^\nu,x^{-\nu})\qquad
	\text{s.t.}\quad w^\nu\in W^\nu_0(x^{-\nu}).
\end{align}

The reduced noncooperative game in which each leader $\nu$ solves~\eqref{prob:reduced} is
denoted as $\GNEP$.
Now, we  introduce an equilibrium concept for the MLFG via  $\GNEP$ as follows:
\begin{definition}[Leader--follower Nash equilibrium]\label{def:LFNE}
	A tuple of strategies $(x^*,y^*)\in X\times S(x^*)$ is
	a \emph{leader--follower Nash equilibrium} (LFNE)  of the MLFG
    if for all
	$\nu\in\{1,\dots,N\}$, $w^{*,\nu} \coloneq (x^{*,\nu},\hat{y}^{*,\nu})$ 
	solves~\eqref{prob:reduced} with 
	$x^{-\nu}=x^{*,-\nu}$, i.e.,
	$$
	w^{*,\nu}\in\argmin_{w^\nu}\{\theta^\nu(w^\nu,x^{*,-\nu})\mid w^\nu\in W^\nu_0(x^{*,-\nu})\}.
	$$
\end{definition}
Note that the term \emph{leader--follower} NE is used because
the GNE of the problem $\GNEP$ is a solution of the MLFG, and vice versa.
The definition originates from an `optimistic' LFNE introduced by Hu and Fukushima~\cite{Hu2015}.





Note further that 
the above reformulation does not require any derivative information on followers' functions, 
whereas the existing level-reduction approaches require it as explained in the previous subsection.
\subsection{Differentiable penalty minimization problem}
In this section, we introduce a new penalized problem for solving \eqref{prob:reduced}. 
To this end, we first define 
a `regularized' Nikaido--Isoda function for the followers' NE as follows:
$$
\Psi_\mu(x,y,z) \coloneq \sum_{\omega=1}^M
\{\gamma^\omega(x,y^\omega,y^{-\omega})-\gamma^\omega(x,z^\omega,y^{-\omega})\}-
\frac{1}{2\mu}\|y-z\|^2.
$$
Accordingly, the `regularized' value-function for the followers' NEP is defined~as
$$
h_\mu(x,y) \coloneq \max_{z\in Y} \Psi_\mu(x,y,z).
$$
Letting 
$$
\phi^\omega_\mu(x,y^\omega,y^{-\omega}) \coloneq \min_{z^\omega\in Y^\omega}\gamma^\omega(x,z^\omega,y^{-\omega})+
\frac{1}{2\mu}\|y^\omega-z^\omega\|^2,
$$
it holds that
\begin{equation}
h_\mu(x,y)=\sum_{\omega=1}^M \gamma^\omega(x,y^\omega,y^{-\omega})-
\phi^\omega_\mu(x,y^\omega,y^{-\omega}).
\label{eq:hmu}
\end{equation}

Next, we make the following assumptions on the MLFG.
Notice that convexity is assumed for neither the leaders' functions $\theta^{\nu}$ nor the followers' functions $\gamma^{\omega}$.
\begin{assumption}\label{asmp}
	For all $\nu\in\{1,\dots,N\}$ and $\omega\in\{1,\dots,M\}$, the following conditions hold:
	\begin{enumerate}[ref={\theassumption.\arabic{enumi}}]
		\item $X^\nu$ is a convex compact subset of $\Re^{n_\nu}$;
		\item \label{asmp:compact.follower} $Y^\omega$ is a convex compact subset of $\Re^{m_\omega}$;
		\item \label{asmp:smooth.leader}
		      $\theta^\nu$ is twice continuously differentiable on an open bounded
		      set containing $X\times Y$;
		\item \label{asmp:smooth.follower}
		      $\gamma^\omega$ is twice continuously differentiable on an open bounded
		      set containing $X\times Y$.
	\end{enumerate}
\end{assumption}

\begin{assumption}\label{asmp:subanalyticity}
    For all $\nu\in\{1,\dots,N\}$ and $\omega\in\{1,\dots,M\}$, the following conditions hold:
    \begin{enumerate}[ref={\theassumption.\arabic{enumi}}]
		\item $\theta^\nu$ is subanalytic on $X\times Y$;
		\item $\gamma^\omega$ is subanalytic on $X\times Y$;
    \end{enumerate}
\end{assumption}

Under these conditions, we can ensure the functions 
$\theta^{\nu},\gamma^{\omega}$, and $\phi^{\omega}_{\mu}$
are tractable in the following sense:
\begin{proposition}\label{prop:new}
	Under Assumptions~\ref{asmp} and~\ref{asmp:subanalyticity}, the following properties hold:
\begin{enumerate}
\item The functions $\theta^\nu$ and $\gamma^\omega$ are $L$- and $\ell$-smooth
for some $L,\ell>0$; 
\item\label{item:wc} $\gamma^\omega$ is $\ell$-weakly convex;
\item $\phi^\omega_\mu$ is continuously differentiable for all $\mu<\ell^{-1}$.
\end{enumerate}
\end{proposition}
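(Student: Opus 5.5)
Parts 1 and 2 follow directly from the lemmas stated at the start of Section~\ref{sec:preliminaries}; part 3 is where the work is.

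\textbf{Part 1.} By Assumptions~\ref{asmp:smooth.leader} and~\ref{asmp:smooth.follower}, $\theta^\nu$ and $\gamma^\omega$ are $C^2$ on an open set containing the compact set $X\times Y$ (compact because each $X^\nu$ and $Y^\omega$ is compact). Lemma~\ref{lem:C2.imply.smooth} then gives constants $L_\nu,\ell_\omega>0$ such that $\theta^\nu$ is $L_\nu$-smooth and $\gamma^\omega$ is $\ell_\omega$-smooth. Set $L\coloneq\max_\nu L_\nu$ and $\ell\coloneq\max_\omega \ell_\omega$; since $N$ and $M$ are finite, one pair of constants works for all players.

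\textbf{Part 2.} Apply Lemma~\ref{lem:weakly.convex} to $\gamma^\omega$: an $\ell$-smooth function is $\ell$-weakly convex. In particular, for fixed $(x,y^{-\omega})$ the map $z^\omega\mapsto\gamma^\omega(x,z^\omega,y^{-\omega})$ is $\ell$-weakly convex, being the restriction of a weakly convex function to an affine slice.

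\textbf{Part 3.} Fix $\mu<\ell^{-1}$ and consider the inner objective
\[
F(x,y,z^\omega)\coloneq\gamma^\omega(x,z^\omega,y^{-\omega})+\tfrac{1}{2\mu}\|y^\omega-z^\omega\|^2 .
\]
For fixed $(x,y)$ it is strongly convex in $z^\omega$ with modulus $\mu^{-1}-\ell>0$. Since $Y^\omega$ is convex and compact, the minimizer $z^\omega_\mu(x,y)$ exists and is unique.

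The plan is to prove differentiability of $\phi^\omega_\mu$ through a Danskin-type theorem: a min over a compact set of a function that is $C^1$ jointly in the parameters $(x,y)$, with a unique minimizer, is differentiable. In that case
\[
\nabla_{(x,y)}\phi^\omega_\mu(x,y)=\nabla_{(x,y)}F\bigl(x,y,z^\omega_\mu(x,y)\bigr).
\]
Here $\nabla_{(x,y)}F$ consists of the partial gradients of $\gamma^\omega$ in $x$ and $y^{-\omega}$, evaluated at $z^\omega_\mu$, plus the term $\mu^{-1}(y^\omega-z^\omega_\mu)$ in the $y^\omega$ block. This applies whenever $\gamma^\omega$ is $C^1$ near $X\times Y$, which Assumption~\ref{asmp:smooth.follower} provides.

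\textbf{Continuity of the gradient.} Continuity of $\nabla\phi^\omega_\mu$ reduces to continuity of $z^\omega_\mu$. For this I would use the standard argument: by compactness of $Y^\omega$, take any accumulation point of $z^\omega_\mu(x_k,y_k)$; joint continuity of $F$ shows it is a minimizer at the limit point, and uniqueness identifies it with $z^\omega_\mu(x,y)$. One could even get Lipschitz continuity from strong monotonicity of the subdifferential of $F$ together with Lipschitz continuity of $\nabla\gamma^\omega$, but plain continuity is enough here.

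\textbf{Main obstacle.} The delicate point is justifying the Danskin step with a constrained minimization and a merely weakly convex $\gamma^\omega$. The strong convexity condition $\mu<\ell^{-1}$ is exactly what makes the minimizer unique, and uniqueness is what turns the directional derivative formula into a genuine gradient. I would cite the classical Danskin theorem, whose hypotheses are a compact feasible set, $F$ and $\nabla_{(x,y)}F$ continuous, and a unique minimizer, rather than re-derive it. Part 3 does not use Assumption~\ref{asmp:subanalyticity}.
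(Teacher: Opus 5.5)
Your proposal is correct and follows essentially the same route as the paper: Lemma~\ref{lem:C2.imply.smooth} with a common (maximal) constant for part 1, and Lemma~\ref{lem:weakly.convex} for part 2. For part 3 you use strong convexity of the inner objective in $z^\omega$ with modulus $\mu^{-1}-\ell$, hence a unique minimizer on $Y^\omega$, followed by Danskin's theorem; the only difference is that you also argue explicitly that the minimizer, and hence the gradient, is continuous, which the paper leaves to its citation of Danskin's theorem.
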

\begin{proof}
Since 
$\theta^\nu$ and $\gamma^\omega$ are twice continuously differentiable by Assumption~\ref{asmp},
Lemma~\ref{lem:C2.imply.smooth} guarantees that 
$\theta^\nu$ and $\gamma^\omega$ are $L_\nu$- and $\ell_\omega$-smooth, respectively. 
Hence, for sufficiently large $L,\ell>0$, we ensure 
$\theta^\nu$ and $\gamma^\omega$ are $L$- and $\ell$-smooth, respectively.

The $\ell$-smoothness of $\gamma^\omega$, together with
Lemma~\ref{lem:weakly.convex}, shows that $\gamma^\omega$ is $\ell$-weakly convex.
Since $\gamma^\omega(x,\cdot,y^{-\omega})$ is also $\ell$-weakly convex,
\begin{align*}
  & \gamma^\omega(x,z^\omega,y^{-\omega})+
  \frac{1}{2\mu}\|y^\omega-z^\omega\|^2 \\
  = \: & \gamma^\omega(x,z^\omega,y^{-\omega}) + \frac{\ell}{2} \|z^\omega\|^2
  + \frac{1}{2} \left( \frac{1}{\mu}-\ell \right) \|z^\omega\|^2
  + \frac{1}{2\mu} \|y^\omega\|^2 - \frac{1}{\mu} \langle y^\omega, z^\omega \rangle
\end{align*}
is strongly convex on $z^\omega$ when $1/\mu-\ell>0$.
Then, the solution to the inner minimization of $\phi^\omega_\mu$ is
always unique for all $\mu<\ell^{-1}$.
Thus, together with Assumptions~\ref{asmp:compact.follower} and~\ref{asmp:smooth.follower},
Danskin's theorem (see, e.g.,~\cite[Remark~4.14]{Bonnans2000}) ensures that $\phi^\omega_\mu$
is continuously differentiable.
\end{proof}


In the literature, $h_\mu(x,y)$ has been extensively investigated for convex NEPs, i.e., the case where $\gamma^\omega(x,\cdot,y^{-\omega})$
is convex for any fixed $x$ and $y^{-\omega}$, and $Y^\omega$ is convex.
These results can be extended to
nonconvex NEPs as follows:

\begin{proposition}\label{prop:NIfunc}
	Under Assumption~\ref{asmp}, the following statements are valid:
	For arbitrary $x\in X$,
	\begin{enumerate}
        \item\label{prop:NIfunc:item1} $h_\mu(x,\cdot)$ is nonnegative on $Y$;
		\item Suppose that $S(x)\neq\emptyset$.
			If $y^*\in Y$ is an NE of the followers' NEP, i.e., $y^*\in S(x)$,
			then $h_\mu(x,y^*)=0$ holds.
			Moreover, if for all $\omega\in\{1,\dots,M\}$, $\gamma^\omega(x,\cdot,y^{-\omega})$ is convex for arbitrary $y^{-\omega}$,
			the converse is also valid;
		\item For every $y\in Y$ and $\mu<\ell^{-1}$,
        there exists a unique $z^*(x,y)$ such that
			\begin{align}\label{eq:best.response}
			   z^*(x,y)=\argmax_{z\in Y} \Psi_\mu(x,y,z),
			\end{align}
			and $z^*(x,y)$ is continuous in $(x,y)$.
	\end{enumerate}
\end{proposition}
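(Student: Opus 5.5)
Each item follows from evaluating the regularized Nikaido--Isoda function $\Psi_\mu$ at a suitable point or from its separable structure in $z$.

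\emph{Item~\ref{prop:NIfunc:item1}.} Since $y\in Y$, we may take $z=y$ in the maximization defining $h_\mu$. This gives $h_\mu(x,y)\ge\Psi_\mu(x,y,y)=0$.

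\emph{Item 2.} Let $y^*\in S(x)$, so $h(x,y^*)\le 0$. Because the quadratic term is nonpositive, $\Psi_\mu(x,y^*,z)\le\Psi(x,y^*,z)$ for every $z$. Hence
\[
h_\mu(x,y^*)\le h(x,y^*)\le 0.
\]
Combined with item~\ref{prop:NIfunc:item1}, this yields $h_\mu(x,y^*)=0$.

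For the converse, assume convexity and $h_\mu(x,y^*)=0$. By the decomposition~\eqref{eq:hmu}, $h_\mu$ is a sum of the terms $\gamma^\omega(x,y^{*,\omega},y^{*,-\omega})-\phi^\omega_\mu(x,y^{*,\omega},y^{*,-\omega})$. Each term is nonnegative, as seen by taking $z^\omega=y^{*,\omega}$. So each term vanishes, which means $y^{*,\omega}$ minimizes
\[
z^\omega\mapsto\gamma^\omega(x,z^\omega,y^{*,-\omega})+\tfrac1{2\mu}\|y^{*,\omega}-z^\omega\|^2
\]
over $Y^\omega$. The first-order optimality condition for this convex problem at $z^\omega=y^{*,\omega}$ reads $-\nabla_{y^\omega}\gamma^\omega(x,y^{*})\in\mathcal N_{Y^\omega}(y^{*,\omega})$, because the gradient of the regularization term vanishes there. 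By convexity of $\gamma^\omega(x,\cdot,y^{*,-\omega})$ and of $Y^\omega$, this condition is sufficient for $y^{*,\omega}$ to minimize $\gamma^\omega(x,\cdot,y^{*,-\omega})$ over $Y^\omega$. This holds for every $\omega$, so $y^*$ is an NE and $y^*\in S(x)$. Differentiability is available on a neighborhood of $X\times Y$ by Assumption~\ref{asmp}.

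\emph{Item 3.} Maximizing $\Psi_\mu(x,y,\cdot)$ over $Y=\prod_\omega Y^\omega$ splits into $M$ independent problems $\min_{z^\omega\in Y^\omega}\gamma^\omega(x,z^\omega,y^{-\omega})+\frac1{2\mu}\|y^\omega-z^\omega\|^2$. The proof of Proposition~\ref{prop:new} shows each is strongly convex for $\mu<\ell^{-1}$. Since $Y^\omega$ is compact and convex, each has a unique minimizer, and concatenating them gives the unique $z^*(x,y)$.

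Continuity is the step that needs an argument. I would use Berge's maximum theorem: the objective is jointly continuous and the constraint set $Y$ is constant and compact, so the argmax correspondence is upper semicontinuous; being single-valued, it is a continuous function. Alternatively, take a sequence $(x_k,y_k)\to(x,y)$. By compactness, $z^*(x_k,y_k)$ has cluster points, and passing to the limit in the optimality inequality $\Psi_\mu(x_k,y_k,z^*(x_k,y_k))\ge\Psi_\mu(x_k,y_k,z)$ shows every cluster point is a maximizer. Uniqueness then identifies the cluster point with $z^*(x,y)$, so the whole sequence converges to it.

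The only point needing care is in item 2: using the differentiable characterization of minimizers there, rather than the weaker regularized fixed-point property, is what requires the convexity hypothesis. Example~\ref{ex:nonconvex.NEP} shows the converse genuinely fails without it.
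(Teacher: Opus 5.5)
Your proof is correct, and it takes a different route from the paper only in being self-contained. The paper gives no argument of its own: it says the result is ``readily shown based on''~\cite[Theorem~3.3]{Heusinger2009}, which is stated for convex games. In that setting, uniqueness of the maximizer of the regularized Nikaido--Isoda function comes from convexity of the players' costs.

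Your argument is the adaptation the citation leaves to the reader, and it shows exactly where convexity is and is not needed.
\begin{itemize}
\item Item~1 and the forward half of item~2 need no convexity; the comparison $h_\mu\le h$ is the right tool.
\item The converse in item~2 is the only place convexity enters. It holds for every $\mu>0$. Your steps there (each summand of \eqref{eq:hmu} vanishes, so $y^{*,\omega}$ solves the proximal subproblem, so the first-order condition holds) are the inclusion $\hat S(x)\subset T(x)$ of Proposition~\ref{prop:first-order.condition}. Convexity then makes that first-order condition sufficient.
\item In item~3, strong concavity of $\Psi_\mu(x,y,\cdot)$ comes from $\ell$-weak convexity together with $\mu<\ell^{-1}$, not from convexity. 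You rightly invoke only the part of the proof of Proposition~\ref{prop:new} that depends on Assumption~\ref{asmp}, not on subanalyticity.
\item Continuity of the unique maximizer then follows from Berge's theorem or from your sequential-compactness argument; both are fine.
\end{itemize}

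Your route verifies that the nonconvex extension the paper asserts actually holds under the stated hypotheses. The paper's route buys only brevity.
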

\begin{proof}
	The proof is readily shown based on~\cite[Theorem~3.3]{Heusinger2009}.
\end{proof}

Hereinafter, assume that $\mu<\ell^{-1}$.
From Danskin's theorem, we have
\[
	\nabla h_\mu(x,y) = \sum_{\omega=1}^M
	\begin{bmatrix}
		\nabla_x \gamma^\omega(x,y) - \nabla_x \gamma^\omega(x,z^{*,\omega}(x,y),y^{-\omega}) \\
		\nabla_y \gamma^\omega(x,y) - \varpi^\omega(x,z^{*,\omega}(x,y),y^{-\omega})
	\end{bmatrix},
\]
where $z^{*,\omega}(x,y)=\argmin_{z^\omega\in Y^\omega} \{\gamma^\omega(x,z^\omega,y^{-\omega})+1/(2\mu)\|z^\omega-y^\omega\|^2\}$ and
\[
	\varpi^\omega(x,z^{*,\omega}(x,y),y^{-\omega}) \coloneq 
	\begin{bmatrix}
		\nabla_{y^1} \gamma^\omega(x,z^{*,\omega}(x,y),y^{-\omega}) \\ 
		\vdots \\
		\nabla_{y^{\omega-1}} \gamma^\omega(x,z^{*,\omega}(x,y),y^{-\omega}) \\
		\frac1\mu(y^\omega-z^{*,\omega}(x,y)) \\
		\nabla_{y^{\omega+1}} \gamma^\omega(x,z^{*,\omega}(x,y),y^{-\omega}) \\
		\vdots \\
		\nabla_{y^M} \gamma^\omega(x,z^{*,\omega}(x,y),y^{-\omega})
	\end{bmatrix}.
\]

Define 
\[
	\hat{S}(x)  \coloneq  \{y\in Y \mid h_\mu(x,y)\le 0\}.
\]
By Proposition~\ref{prop:NIfunc}, it holds that
\begin{equation}
	S(x) \subset \hat{S}(x),\label{eq:sxshs}
\end{equation}
where the equality holds
if $\gamma^\omega(x,\cdot,y^{-\omega})$ is convex.
The following proposition characterizes $\hat{S}(x)$
by the first-order conditions of the followers' problems. 
The proof is similar to the result by \cite{Liu2024} for bilevel optimization.
\begin{proposition}\label{prop:first-order.condition}
	Suppose that Assumptions~\ref{asmp:compact.follower}
	and~\ref{asmp:smooth.follower} hold.
	Then, 
	\[
		\hat{S}(x) = T(x) \coloneq\left\{
			y\in Y\ \middle|\ 
			0\in \nabla_{y^\omega}\gamma^\omega(x,y^\omega,y^{-\omega})+\mathcal{N}_{Y^\omega}(y^\omega),\ \omega=1,\dots,M
		\right\},
	\]
    recalling that
    $\mathcal{N}_{Y^\omega}(y^\omega)$ is the normal cone to $Y^{\omega}$ at $y^{\omega}$.
\end{proposition}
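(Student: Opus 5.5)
Since $h_\mu(x,\cdot)\ge 0$ on $Y$ by Proposition~\ref{prop:NIfunc}, the set $\hat{S}(x)$ consists exactly of those $y\in Y$ with $h_\mu(x,y)=0$. By the decomposition~\eqref{eq:hmu}, each summand $\gamma^\omega(x,y^\omega,y^{-\omega})-\phi^\omega_\mu(x,y^\omega,y^{-\omega})$ is nonnegative, because $z^\omega=y^\omega$ is feasible in the inner minimization. Hence $h_\mu(x,y)=0$ holds if and only if, for every $\omega$,
\[
y^\omega\in\argmin_{z^\omega\in Y^\omega}\Bigl\{\gamma^\omega(x,z^\omega,y^{-\omega})+\tfrac{1}{2\mu}\|y^\omega-z^\omega\|^2\Bigr\}.
\]
The plan is therefore to show, follower by follower, that this proximal fixed-point condition is equivalent to the stationarity condition $0\in\nabla_{y^\omega}\gamma^\omega(x,y^\omega,y^{-\omega})+\mathcal{N}_{Y^\omega}(y^\omega)$.

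The key ingredient is strong convexity. Under the standing assumption $\mu<\ell^{-1}$, Proposition~\ref{prop:new} (via $\ell$-weak convexity) shows that $q(z^\omega):=\gamma^\omega(x,z^\omega,y^{-\omega})+\frac{1}{2\mu}\|y^\omega-z^\omega\|^2$ is strongly convex and $C^1$ on a neighborhood of the convex compact set $Y^\omega$. For such a function, a point $\bar z$ minimizes $q$ over the convex set $Y^\omega$ if and only if $0\in\nabla q(\bar z)+\mathcal{N}_{Y^\omega}(\bar z)$. This is the standard first-order optimality condition: necessity holds for any $C^1$ function on a convex set, and sufficiency follows from convexity of $q$.

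Evaluating at $\bar z=y^\omega$, the regularization term contributes gradient $\frac{1}{\mu}(y^\omega-y^\omega)=0$. So $\nabla q(y^\omega)=\nabla_{y^\omega}\gamma^\omega(x,y^\omega,y^{-\omega})$, and the condition becomes exactly the $\omega$-th condition defining $T(x)$. Taking the conjunction over $\omega=1,\dots,M$ gives both inclusions $\hat S(x)\subset T(x)$ and $T(x)\subset\hat S(x)$.

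The only delicate point is the sufficiency direction $T(x)\subset\hat S(x)$. Stationarity of the nonconvex function $\gamma^\omega$ alone would not give optimality, and the argument relies on transferring stationarity to the strongly convex proximal objective $q$. This is where $\mu<\ell^{-1}$ is essential: the requirement is only convexity of $q$ on $Y^\omega$, which follows from $\gamma^\omega+\frac{\ell}{2}\|\cdot\|^2$ being convex on the relevant convex set. I would make this explicit by citing Lemma~\ref{lem:weakly.convex} on a convex open neighborhood of $Y$.
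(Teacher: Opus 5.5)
Your proposal is correct and follows essentially the same route as the paper. Both reduce $h_\mu(x,y)\le 0$ to each nonnegative summand $\gamma^\omega-\phi^\omega_\mu$ vanishing, i.e.\ $y^\omega$ minimizing the proximal subproblem, and then apply the first-order condition at $z^\omega=y^\omega$, where the regularization gradient is zero; convexity from $\mu<\ell^{-1}$ gives sufficiency for $T(x)\subset\hat{S}(x)$. Your write-up is slightly more explicit than the paper's, which leaves the necessity step in $\hat{S}(x)\subset T(x)$ implicit.
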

\begin{proof}
First, we show  $\hat{S}(x)\subset T(x)$.
Choose $y\in \hat{S}(x)$ arbitrarily. 
From $h_\mu(x,y)\le 0$ and \eqref{eq:hmu}, we have 
    \[  
		\begin{aligned}
             &\sum_{\omega=1}^M \gamma^\omega(x,y^\omega,y^{-\omega})\le \sum_{\omega=1}^M\phi^\omega_\mu(x,y^\omega,y^{-\omega})\\
			=&\sum_{\omega=1}^M \min_{z^\omega\in Y^\omega} [\gamma^\omega(x,z^\omega,y^{-\omega})+\frac1{2\mu}\|y^\omega-z^\omega\|^2]
			\le \sum_{\omega=1}^M \gamma^\omega(x,y^\omega,y^{-\omega}),
		\end{aligned}
	\]
\noindent and thus $\sum_{\omega=1}^M [\gamma^\omega(x,y^\omega,y^{-\omega}) - \phi^\omega_\mu(x,y^\omega,y^{-\omega})]=0$,
    which together with 
	 $\gamma^\omega(x,y^\omega,y^{-\omega})-\phi^\omega_\mu(x,y^\omega,y^{-\omega})\ge 0$ for all $\omega$
	from the definition implies 
	\[
		\gamma^\omega(x,z^\omega,y^{-\omega})-\phi^\omega_\mu(x,z^\omega,y^{-\omega}) = 0\quad \forall\omega\in\{1,\dots,M\}.
	\]
    Therefore, we have 
	$y^\omega\in\argmin_{z^\omega\in Y^\omega}\gamma^\omega(x,z^\omega,y^{-\omega})+\frac1{2\mu}\|y^\omega-z^\omega\|^2$ for all $\omega$,
    and hence we ensure that $y \in T(x)$.  

	Next, we show $\hat{S}(x)\supset T(x)$.
    Choose $y\in T(x)$ arbitrarily.
    From the second assertion of Proposition~\ref{prop:new},
   the function $\gamma^\omega(x,z^\omega,y^{-\omega})+\frac1{2\mu}\|y^\omega-z^\omega\|^2$ is convex with respect to
    $z^\omega$ on the compact convex set $Y^\omega$ under $\mu<\ell^{-1}$.
    Then, it follows from $y\in T(x)$ that for each $\omega$, we have
	\[
		y^\omega \in\argmin_{z^\omega\in Y^\omega}
		\gamma^\omega(x,z^\omega,y^{-\omega})+\frac1{2\mu}\|y^\omega-z^\omega\|^2,
	\]
which
yields
$\gamma^\omega(x,y^\omega,y^{-\omega})-\phi^\omega_\mu(x,y^\omega,y^{-\omega})=0$ for all $\omega$.
Therefore, 
$h_\mu(x,y)= 0$ holds and thus $y\in \hat{S}(x)$.
The proof is complete.
\end{proof}

Define the following problem by replacing the followers' Nash equilibrium condition 
$y\in S(x)$ in~\eqref{prob:reduced} with $y\in\hat{S}(x)$:
\begin{align}\label{prob:reduced.relaxed}
	\min_{w^\nu \in \Re^{n_\nu+m}} \quad \theta^\nu(w^\nu,x^{-\nu}) \qquad
	\text{s.t.}  \quad w^\nu=(x^\nu,\hat{y}^\nu)\in\hat{W}^\nu_0(x^{-\nu}),
\end{align}
where 
\begin{align*}
\hat{W}_0^\nu(x^{-\nu})
\coloneq& \{w^\nu\in\Re^{n_\nu+m} \mid x^\nu\in X^\nu, \hat{y}^\nu\in\hat{S}(x^\nu,x^{-\nu})\}\\
=&
\{w^\nu\in\Re^{n_\nu+m} \mid x^\nu\in X^\nu, \hat{y}^\nu\in Y, h_{\mu}(x^{\nu},x^{-\nu},\hat{y}^\nu)\le 0\}.
\end{align*}
We denote by $\hatGNEP$
the game 
in which leader $\nu$ solves~\eqref{prob:reduced.relaxed}.
$\hatGNEP$ is a relaxation of $\GNEP$ due to~ \eqref{eq:sxshs}.
If the followers' NEP is convex, i.e., the objective function $\gamma^\omega(x,\cdot,y^{-\omega})$ is
convex, 
$\hat{S}(x)=S(x)$ holds and hence this relaxation is tight; otherwise, the concept of the equilibrium 
differs from each other.

\begin{definition}[Normalized leader--follower Nash equilibrium]\label{def:normal.LFNE}
	A tuple of strategies $(x^*,y^*)\in X\times\hat{S}(x^*)$ is a 
    \emph{normalized leader--follower Nash equilibrium} (NoLFNE) of the MLFG
    if for all $\nu\in\{1,\dots,N\}$,
	$w^{*,\nu}=(x^{*,\nu},\hat{y}^{*,\nu})$ solves~\eqref{prob:reduced.relaxed} with fixed $x^{-\nu}=x^{*,-\nu}$,
	i.e.,
	\[
		w^{*,\nu}\in\argmin_{w^\nu}\{\theta^\nu(w^\nu,x^{*,-\nu})\mid w^\nu\in\hat{W}^\nu_0(x^{*,-\nu})\}.
	\]
\end{definition}

The following assertion holds obviously.

\begin{theorem}\label{thm:LFNE.NoLFNE}
	Suppose that the followers' NEP is convex for any tuples of 
	leaders' strategies, i.e., for all $\omega\in\{1,\dots,M\}$,
	$\gamma^\omega(x,\cdot,y^{-\omega})$ is convex for arbitrary
	fixed $x$ and $y^{-\omega}$.
	A point is an LFNE if and only if it is a NoLFNE. 
\end{theorem}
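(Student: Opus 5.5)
The plan is to show that, under the convexity hypothesis, the feasible sets of the two reduced games coincide, i.e., $\hat{S}(x)=S(x)$ for every $x\in X$. Once this is in hand, $\hat{W}^\nu_0(x^{-\nu})=W^\nu_0(x^{-\nu})$ for every $\nu$ and every $x^{-\nu}$. Definitions~\ref{def:LFNE} and~\ref{def:normal.LFNE} then impose literally the same conditions: $(x^*,y^*)\in X\times S(x^*)=X\times\hat{S}(x^*)$, and $w^{*,\nu}$ minimizes the same function $\theta^\nu(\cdot,x^{*,-\nu})$ over the same set for every $\nu$. So the equivalence follows immediately.

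To prove $S(x)=\hat{S}(x)$, fix $x\in X$. The inclusion $S(x)\subset\hat{S}(x)$ is \eqref{eq:sxshs}, which holds without convexity. For the reverse inclusion I would use Proposition~\ref{prop:first-order.condition}, which gives $\hat{S}(x)=T(x)$.

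Take $y\in T(x)$. For each $\omega$ we have $0\in\nabla_{y^\omega}\gamma^\omega(x,y^\omega,y^{-\omega})+\mathcal{N}_{Y^\omega}(y^\omega)$. The function $\gamma^\omega(x,\cdot,y^{-\omega})$ is convex and differentiable and $Y^\omega$ is convex, so this first-order condition is sufficient for optimality. Hence $y^\omega\in\argmin_{z^\omega\in Y^\omega}\gamma^\omega(x,z^\omega,y^{-\omega})$, which means $\gamma^\omega(x,y^\omega,y^{-\omega})=\phi^\omega(x,y^{-\omega})$ for every $\omega$. Summing over $\omega$ gives $h(x,y)=0\le 0$, so $y\in S(x)$.

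Alternatively, the converse part of Proposition~\ref{prop:NIfunc}(2) gives the same conclusion directly. That route, however, carries a hypothesis $S(x)\neq\emptyset$, so the $T(x)$ route is cleaner because it needs no nonemptiness assumption.

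There is no real obstacle. The only point needing care is that Proposition~\ref{prop:first-order.condition} assumes $\mu<\ell^{-1}$, which is the standing assumption, together with Assumptions~\ref{asmp:compact.follower} and~\ref{asmp:smooth.follower}; these are in force. I would also check that the equivalence is meant with the same $\hat{y}^{*,\nu}$ components, which it is, since the two definitions differ only through $S$ versus $\hat{S}$.
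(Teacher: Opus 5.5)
Your proposal is correct and is essentially the paper's argument. The paper calls the theorem obvious precisely because, under convexity, $\hat{S}(x)=S(x)$ (the remark after \eqref{eq:sxshs}); hence $\hat{W}^\nu_0(x^{-\nu})=W^\nu_0(x^{-\nu})$, and Definitions~\ref{def:LFNE} and~\ref{def:normal.LFNE} coincide. Your derivation of $\hat{S}(x)\subset S(x)$ through $T(x)$ and first-order sufficiency is a clean way to make that remark rigorous, and it does avoid the $S(x)\neq\emptyset$ hypothesis of Proposition~\ref{prop:NIfunc}; it uses only the inclusion $\hat{S}(x)\subset T(x)$, which does not even need $\mu<\ell^{-1}$.
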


Despite the regularization of the value-function $h(x,y)$ (called $h_\mu(x,y)$),
$\hatGNEP$ is still difficult to solve in practice since
\begin{itemize}
	\item the constraint of each leader's problem still
	depends on $x^{-\nu}$;
	\item ordinary constraint qualifications, such as the
	Mangasarian--Fromovitz constraint qualification, do not hold in general~\cite[Proposition~3.2]{Ye1995}, which will be discussed
	in detail in Section~\ref{ssec:VE}.
\end{itemize}

Now we consider a partially penalized problem of~\eqref{prob:reduced.relaxed}
as follows:
\begin{align}\label{prob:penalty}
	\begin{aligned}
		\min_{w^\nu} &\quad \Theta^\nu_\mu(w^\nu,x^{-\nu};\rho) \coloneq 
		\theta^\nu(w^\nu,x^{-\nu})+\rho h_\mu(w^\nu,x^{-\nu}) \\
		\text{s.t.} &\quad w^\nu \in W^\nu  \coloneq  X^\nu\times Y,
	\end{aligned}
\end{align}
where $\rho>0$ is a penalty parameter. 
The penalization utilizes the property of $h_\mu$ that
\begin{itemize}
	\item $h_\mu(x,y)\ge 0$ for all $x\in X$ and $y\in Y$;
	\item $h_\mu(x,y)\le 0$ implies that $y\in\hat{S}(x)$
	(respectively, $y\in S(x)$ if $\gamma^\omega(x,\cdot,y^{-\omega})$ is
		convex).
\end{itemize}

Note that~\eqref{prob:penalty} for $\nu=1,\dots,N$, are regarded as a single-level NEP; thus,
the game in which leader $\nu\in\{1,\dots,N\}$ solves~\eqref{prob:penalty}
is referred to as $\NEP$ or \emph{$\rho$-penalized NEP},
and $\hatGNEP$ is referred to as \emph{constrained GNEP}.

In order to analyze some properties of $\NEP$ in the next section,
we introduce the following relaxed $\hatGNEP$, in which
each leader's reduced problem~\eqref{prob:reduced.relaxed} is relaxed as
\begin{align}\label{prob:relaxed.problem}
	\min_{w^\nu}\quad \theta^\nu(w^\nu,x^{-\nu})
	\qquad \text{s.t.}\quad
	w^\nu\in\hat{W}^\nu_{\delta^\nu}(x^{-\nu}),
\end{align}
where
\[
	\hat{W}^\nu_{\delta^\nu}(x^{-\nu}) \coloneq 
	\{w^\nu\in W^\nu\mid h_\mu(w^\nu,x^{-\nu})\le\delta^\nu\}.
\]
For $\delta\coloneq(\delta^1,\dots,\delta^N)$, we denote by $\hatRGNEP$ the noncooperative game in which leader
$\nu\in\{1,\dots,N\}$ solves~\eqref{prob:relaxed.problem}, which is called $\delta$-\emph{relaxed GNEP}.

\section{Relationships between \texorpdfstring{$\rho$}{rho}-penalized NEP and 
\texorpdfstring{$\delta$}{delta}-relaxed GNEP}
\label{sec:relations}

In this section, we study the relationships between $\NEP$ and $\hatRGNEP$ 
with particular emphasis on their solution sets.
We summarize the relationships between the MLFG and these NEPs in Figure~\ref{fig:rel} and Table~\ref{tab:rel}, provided in the end of this section.


\subsection{Approximate Nash equilibria of $\rho$-penalized NEP and relaxed GNEP}


First,  we introduce an approximate NE of $\NEP$ and an approximate GNE of $\hatRGNEP$.

\begin{definition}[Approximate Nash equilibrium for $\rho$-penalized NEP]
	\label{def:approx.NE}
	For $\varepsilon>0$, a point $w^* \coloneq (w^{*,1},\dots,w^{*,N})\in W\coloneq W^1\times\dots\times W^N$ is said 
	to be an \emph{$\varepsilon$-Nash equilibrium}
	or $\varepsilon$-NE of $\rho$-penalized NEP ($\NEP$) if 
	the following inequality holds for all $\nu\in\{1,\dots,N\}$:
	$$
	\Theta^\nu_\mu(w^{*,\nu},x^{*,-\nu};\rho)\le\Theta^\nu_\mu(w^\nu,x^{*,-\nu};\rho)+\varepsilon\quad\forall w^\nu\in W^\nu.
	$$
\end{definition}

\begin{definition}[Approximate generalized Nash equilibrium for $\delta$-re\-laxed GNEP]
	\label{def:approx.GNE}
	For $\varepsilon>0$ and $\delta\in\Re^N_{++}$, $w^*\in \hat{W}_\delta(x^*) \coloneq 
	\prod_{\nu=1}^N \hat{W}^\nu_{\delta^\nu}(x^{*,-\nu})$ is
	said to be an $\varepsilon$-GNE of $\delta$-relaxed GNEP
	($\hatRGNEP$) if the following inequality holds for all $\nu\in\{1,\dots,N\}$:
	$$
	\theta^\nu(w^{*,\nu},x^{*,-\nu})\le\theta^\nu(w^\nu,x^{*,-\nu})+\varepsilon
	\quad\forall w^\nu\in \hat{W}^\nu_{\delta^\nu}(x^{*,-\nu}).
	$$
\end{definition}

We analyze the relationship between $\NEP$ and \\ $\hatRGNEP$ in terms of approximate (G)NE.
Some auxiliary lemmas are given before the main assertions.

\begin{lemma}\label{lem:lipschitz.hmu}
	Under Assumption~\ref{asmp}, $h_\mu$ is Lipschitz continuous on $X\times Y$.
\end{lemma}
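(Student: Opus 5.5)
We prove Lemma~\ref{lem:lipschitz.hmu} directly from the representation \eqref{eq:hmu}, $h_\mu(x,y)=\sum_{\omega}[\gamma^\omega(x,y^\omega,y^{-\omega})-\phi^\omega_\mu(x,y^\omega,y^{-\omega})]$. It suffices to show that each $\gamma^\omega$ and each $\phi^\omega_\mu$ is Lipschitz continuous on $X\times Y$, since finite sums and differences of Lipschitz functions are Lipschitz.

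For $\gamma^\omega$, Proposition~\ref{prop:new} gives that $\gamma^\omega$ is $\ell$-smooth. Since $X\times Y$ is compact by Assumption~\ref{asmp}, Lemma~\ref{lem:smooth.lipschitz} yields that $\nabla\gamma^\omega$ is bounded there, say by $G$. Moreover, $X\times Y$ is convex, so the mean value theorem along segments gives that $\gamma^\omega$ is $G$-Lipschitz on $X\times Y$.

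For $\phi^\omega_\mu$, we avoid differentiability (and any restriction on $\mu$) and use the standard min-of-Lipschitz argument. Write $F(x,y,z^\omega)\coloneq\gamma^\omega(x,z^\omega,y^{-\omega})+\frac1{2\mu}\|y^\omega-z^\omega\|^2$. For fixed $z^\omega\in Y^\omega$, the map $(x,y)\mapsto F(x,y,z^\omega)$ is Lipschitz on $X\times Y$ with a constant independent of $z^\omega$:
\begin{itemize}
\item the $\gamma^\omega$ part has constant $G$, by the previous step, since $(x,z^\omega,y^{-\omega})\in X\times Y$;
\item the quadratic part has gradient $\frac1\mu(y^\omega-z^\omega)$, bounded by $\frac1\mu\,\mathrm{diam}(Y^\omega)$ on the compact set $Y$.
\end{itemize}
Call the resulting uniform constant $C$. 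Then for $(x,y),(x',y')\in X\times Y$, let $\bar z$ be a minimizer at $(x',y')$; it exists by compactness of $Y^\omega$ and continuity of $F$. We get
\[
\phi^\omega_\mu(x,y)-\phi^\omega_\mu(x',y')\le F(x,y,\bar z)-F(x',y',\bar z)\le C\|(x,y)-(x',y')\|,
\]
and exchanging the roles of the two points gives the reverse inequality. Hence $\phi^\omega_\mu$ is $C$-Lipschitz.

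Combining the two steps, $h_\mu$ is Lipschitz on $X\times Y$ with constant at most $M(G+C)$. The only delicate point is ensuring that the Lipschitz constant of $F(\cdot,\cdot,z^\omega)$ is uniform in $z^\omega$; compactness of $Y$ handles this.
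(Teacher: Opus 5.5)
Your proof is correct, but it follows a different route from the paper. The paper's proof is a one-liner. Under the standing assumption $\mu<\ell^{-1}$, Proposition~\ref{prop:new} gives strong convexity of the inner problem, hence a unique proximal point, and Danskin's theorem then makes each $\phi^\omega_\mu$, and therefore $h_\mu$, continuously differentiable. So $\nabla h_\mu$ is bounded on the compact convex set $X\times Y$, and the mean value theorem gives the Lipschitz property.

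You never differentiate $\phi^\omega_\mu$. Instead you show that the family $F(\cdot,\cdot,z^\omega)$, $z^\omega\in Y^\omega$, is uniformly Lipschitz on $X\times Y$, and use that a pointwise minimum of such a family inherits the same constant. This buys generality and self-containedness:
\begin{itemize}
\item your argument holds for every $\mu>0$;
\item it needs no uniqueness of the inner minimizer and no Danskin-type result;
\item it only requires $\gamma^\omega$ to be Lipschitz on $X\times Y$;
\item it gives an explicit constant of the form $\sum_{\omega}(2G+\mathrm{diam}(Y^\omega)/\mu)$.
\end{itemize}
The paper's route is shorter because it reuses the differentiability of $h_\mu$, which it needs anyway for the gradient formula and the VE analysis. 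It identifies the constant as $\sup_{X\times Y}\|\nabla h_\mu\|$.

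One small tidy-up: you do not need Proposition~\ref{prop:new} to bound $\nabla\gamma^\omega$, and that proposition is stated under Assumption~\ref{asmp:subanalyticity} as well. Continuity of $\nabla\gamma^\omega$ on the compact set $X\times Y$, from Assumption~\ref{asmp:smooth.follower}, already suffices and keeps your proof strictly within the lemma's hypothesis.
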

\begin{proof}
The claim readily follows from the continuous differentiability of $h_{\mu}$
together with boundedness of $X$ and $Y$.
\end{proof}

\begin{lemma}\label{lem:HEB.hmu}
	Suppose that Assumptions~\ref{asmp} and~\ref{asmp:subanalyticity} hold.
	Then, $h_\mu$ is globally subanalytic and satisfies $(\xi,\eta)$-HEB on $X\times Y$ with some $\xi>0$ and $\eta>0$.
\end{lemma}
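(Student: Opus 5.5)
The plan is to establish global subanalyticity of $h_\mu$ on $X\times Y$ first, and then obtain the HEB from Lemma~\ref{lem:factorization.lemma}. The only work is checking that the solution set and the function meet the hypotheses of that lemma.

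\textbf{Step 1: subanalyticity of $\phi^\omega_\mu$.} Fix $\omega$. I would apply Lemma~\ref{lem:gl.subanalytic.value.fn} with the outer variable $(x,y)\in X\times Y$, the inner variable $z^\omega\in Y^\omega$, and the function
\[
f(x,y,z^\omega)\coloneq \gamma^\omega(x,z^\omega,y^{-\omega})+\tfrac{1}{2\mu}\|y^\omega-z^\omega\|^2 .
\]
This requires four things.
\begin{enumerate}
\item $X\times Y$ and $Y^\omega$ are bounded and globally subanalytic. Boundedness comes from Assumption~\ref{asmp}. Global subanalyticity would be taken as part of the setting, e.g.\ $X^\nu, Y^\omega$ described by analytic inequalities; Lemma~\ref{lem:globally.subanalytic} then upgrades bounded subanalytic sets to globally subanalytic ones. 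This is an implicit hypothesis that should be stated.
\item $f$ is bounded, since it is continuous on a compact set.
\item $f$ is subanalytic. The map $(x,y,z^\omega)\mapsto \gamma^\omega(x,z^\omega,y^{-\omega})$ is $\gamma^\omega$ composed with a linear coordinate map, so it is subanalytic by Assumption~\ref{asmp:subanalyticity}. The quadratic term is analytic. On a bounded domain, sums of bounded subanalytic functions are subanalytic, because the graph of the sum is a projection of a bounded subanalytic set, which is covered by Lemma~\ref{lem:image.subanalytic} together with Lemma~\ref{lem:globally.subanalytic}.
\item With these in place, Lemma~\ref{lem:gl.subanalytic.value.fn} gives that $\phi^\omega_\mu$ is bounded and globally subanalytic on $X\times Y$.
\end{enumerate}

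\textbf{Step 2: global subanalyticity of $h_\mu$.} By \eqref{eq:hmu}, $h_\mu=\sum_\omega(\gamma^\omega-\phi^\omega_\mu)$. Each restriction $\gamma^\omega|_{X\times Y}$ is bounded and subanalytic, hence globally subanalytic by Lemma~\ref{lem:globally.subanalytic}. Finite sums and differences of globally subanalytic functions are globally subanalytic: the graph of the sum is the image of the fibre product of the graphs under the polynomial map $(u,a,b)\mapsto(u,a+b)$, so Lemma~\ref{lem:image.subanalytic} applies. This proves the first claim.

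\textbf{Step 3: the HEB.} By Proposition~\ref{prop:NIfunc}(\ref{prop:NIfunc:item1}), $h_\mu\ge0$ on $X\times Y$. For any $x\in X$, the followers' NEP restricted to $Y$ with $\mu<\ell^{-1}$ has a stationary point, since $T(x)\neq\emptyset$ by a fixed point of the continuous map $z^*(x,\cdot)$ (Brouwer). Proposition~\ref{prop:first-order.condition} then gives $\min h_\mu=0$. The argmin set is therefore the zero level set $\{(x,y)\in X\times Y: h_\mu(x,y)=0\}=h_\mu^{-1}(\{0\})$, the preimage of the globally subanalytic set $\{0\}$ under a globally subanalytic function, so it is globally subanalytic by Lemma~\ref{lem:image.subanalytic}. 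Continuity of $h_\mu$ is given by Lemma~\ref{lem:lipschitz.hmu}. Lemma~\ref{lem:factorization.lemma} now yields $\xi,\eta>0$ with $h_\mu(x,y)\ge\xi\,\dist((x,y),\mathrm{argmin})^\eta$.

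\textbf{Main obstacle.} I expect the delicate step to be Step 1: verifying the exact hypotheses of Lemma~\ref{lem:gl.subanalytic.value.fn}, in particular global subanalyticity of the constraint sets and subanalyticity of the composite $f$. The plan handles this through the stability properties of (globally) subanalytic sets under sums and linear maps on bounded domains, plus a standing assumption that $X^\nu$ and $Y^\omega$ are subanalytic.
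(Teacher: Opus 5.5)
Your proposal is correct and follows the paper's route: Lemma~\ref{lem:gl.subanalytic.value.fn} gives global subanalyticity of $\phi^\omega_\mu$. Closure under finite sums via Lemma~\ref{lem:image.subanalytic} then gives it for $h_\mu$; the paper instead obtains subanalyticity of $h_\mu$ and upgrades it using boundedness and Lemma~\ref{lem:globally.subanalytic}. Finally, Lemma~\ref{lem:factorization.lemma} yields the HEB. Your extra checks fill in steps the paper leaves implicit:
\begin{itemize}
\item You verify that the argmin set $h_\mu^{-1}(\{\min h_\mu\})$ is globally subanalytic. The Brouwer argument showing $\min h_\mu=0$ is correct but not needed for this.
\item You rightly flag that Lemma~\ref{lem:gl.subanalytic.value.fn} requires $X\times Y$ and $Y^\omega$ to be globally subanalytic, which the paper uses silently. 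This hypothesis is automatic if Assumption~\ref{asmp:subanalyticity} is read as subanalyticity of the graph of $\gamma^\omega|_{X\times Y}$, since $X\times Y$ is then the projection of a bounded subanalytic set.
\end{itemize}
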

\begin{proof}
	By Assumptions~\ref{asmp} and~\ref{asmp:subanalyticity}, $\gamma^\omega$ is bounded and subanalytic on the 
	compact set $X\times Y$.
	Utilizing Lemma~\ref{lem:gl.subanalytic.value.fn}
	leads that $\phi^\omega_\mu$ is globally subanalytic.
	From the definition of $h_\mu$ and Lemma~\ref{lem:image.subanalytic}, 
	$h_\mu$ is subanalytic.
	In addition, by the Lipschitz continuity of $h_\mu$ from Lemma~\ref{lem:lipschitz.hmu},
	$h_\mu$ is bounded and subanalytic, thus globally subanalytic by Lemma~\ref{lem:globally.subanalytic}.
	Therefore, it follows from Lemmas~\ref{lem:factorization.lemma}
	and~\ref{lem:gl.subanalytic.value.fn} that
	$h_\mu$ satisfies $(\xi,\eta)$-HEB.
\end{proof}

The following lemma concerns the gap of the objective values $\Theta^\nu_\mu$
between a feasible point of $\hat{W}^\nu_0(x^{-\nu})$ and some
point $w^\nu\in W^\nu$ for a fixed $x^{-\nu}$.

\begin{lemma}\label{lem:gap.Theta}
Suppose that Assumption~\ref{asmp} holds.
Then, $\theta^\nu$ is $\bar{L}$-Lipschitz continuous
on $X\times Y$ with a constant $\bar{L}>0$, and
$h_\mu$ is globally subanalytic on $X\times Y$; hence, there exists $\xi>0$ and $\eta>0$ such that $h_\mu$ satisfies $(\xi,\eta)$-HEB.
Let $\hat{W}^\nu_0(x^{-\nu})\neq\emptyset$
and $u^\nu(w^\nu)\in\argmin_{u^\nu\in\hat{W}^\nu_0(x^{-\nu})}\|u^\nu-w^\nu\|$ for $w^\nu\in W^\nu$.
It holds that
\begin{align*}
&        \Theta^\nu_\mu(w^\nu,x^{-\nu};\rho)
-\Theta^\nu_\mu(u^\nu(w^\nu),x^{-\nu};\rho)\\
=&\theta^\nu(w^\nu,x^{-\nu})+ \rho h_\mu(w^\nu,x^{-\nu})-
\theta^\nu(u^\nu(w^\nu),x^{-\nu}) \\
\ge&  -\varepsilon_\rho, 
\end{align*}
where
\begin{equation*}
\varepsilon_\rho \coloneq 
\begin{cases}
\bar{L}\left(\frac{\bar{L}\xi}{\rho \eta}\right)^{\frac{1}{\eta-1}}(1-\frac{1}{\eta}), & \eta>1,\rho>0;\\
0, & \eta=1,\rho\ge {\xi} \bar{L},
\end{cases}
\end{equation*}
\end{lemma}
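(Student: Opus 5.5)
The plan is to split the gap into a $\theta^\nu$-part and an $h_\mu$-part, bound the first from below by a Lipschitz estimate and the second by the H\"olderian error bound, both in terms of $d\coloneq\|w^\nu-u^\nu(w^\nu)\|=\dist(w^\nu,\hat{W}^\nu_0(x^{-\nu}))$, and then minimize the resulting scalar function of $d$.

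\textbf{The $\theta^\nu$-part.} The preliminary claims come from earlier results. By Proposition~\ref{prop:new}, $\theta^\nu$ is $L$-smooth on the compact set $X\times Y$. Lemma~\ref{lem:smooth.lipschitz} then gives an $\bar L$-Lipschitz constant. Global subanalyticity and the HEB for $h_\mu$ are Lemma~\ref{lem:HEB.hmu}, which needs Assumption~\ref{asmp:subanalyticity} in addition to Assumption~\ref{asmp}. Since $u^\nu(w^\nu)\in\hat{W}^\nu_0(x^{-\nu})$, we have $h_\mu(u^\nu(w^\nu),x^{-\nu})=0$, which gives the displayed equality. The Lipschitz estimate yields
\[
\theta^\nu(w^\nu,x^{-\nu})-\theta^\nu(u^\nu(w^\nu),x^{-\nu})\ge-\bar L d .
\]

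\textbf{The $h_\mu$-part.} I need $h_\mu(w^\nu,x^{-\nu})\ge c\,d^\eta$ for some $c>0$. This is the step I expect to be the main obstacle. The HEB of Lemma~\ref{lem:HEB.hmu} is stated on all of $X\times Y$, so it measures the distance to the full zero set of $h_\mu$. That distance can be smaller than the distance to the slice $\hat{W}^\nu_0(x^{-\nu})$ with $x^{-\nu}$ frozen, which is the wrong direction.

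I would therefore apply the HEB argument directly to the partial function $w^\nu\mapsto h_\mu(w^\nu,x^{-\nu})$ on $W^\nu$, for fixed $x^{-\nu}$:
\begin{itemize}
\item it is continuous and globally subanalytic, being the restriction of a globally subanalytic function to a globally subanalytic slice (Lemma~\ref{lem:image.subanalytic});
\item its minimum value is $0$, because $\hat{W}^\nu_0(x^{-\nu})\neq\emptyset$ and $h_\mu\ge0$ (Proposition~\ref{prop:NIfunc});
\item its argmin is exactly $\hat{W}^\nu_0(x^{-\nu})$, which is globally subanalytic as a preimage of $\{0\}$.
\end{itemize}
Lemma~\ref{lem:factorization.lemma} then gives the desired inequality, with constants that may depend on $x^{-\nu}$ (the statement allows this).

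The constant $c$ has to be matched to the paper's convention so that the final formula reads as stated; the displayed $\varepsilon_\rho$ corresponds to writing the bound as $h_\mu\ge d^\eta/\xi$. I will also note that on a bounded set one may always enlarge the exponent to $\eta\ge1$, since $d$ is bounded, at the price of changing the constant. So only the cases $\eta>1$ and $\eta=1$ need to be treated.

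\textbf{Scalar minimization.} Combining the two parts gives
\[
\Theta^\nu_\mu(w^\nu,x^{-\nu};\rho)-\Theta^\nu_\mu(u^\nu(w^\nu),x^{-\nu};\rho)\ge g(d)\coloneq\rho c\,d^\eta-\bar L d .
\]
It remains to bound $\inf_{d\ge0}g(d)$ from below.
\begin{itemize}
\item \emph{Case $\eta>1$.} The function $g$ is convex with minimizer $d^*=(\bar L/(\rho c\eta))^{1/(\eta-1)}$. Its minimum value is $-\bar L d^*(1-1/\eta)$, which is $-\varepsilon_\rho$ after substituting the constant.
\item \emph{Case $\eta=1$.} Here $g(d)=(\rho c-\bar L)d\ge0$ as soon as $\rho$ exceeds the stated threshold, giving $\varepsilon_\rho=0$.
\end{itemize}
This routine calculus completes the proof.
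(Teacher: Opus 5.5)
Your proposal is correct and follows the same route as the paper. The paper's proof just freezes $x^{-\nu}$ and invokes the single-level penalty estimate of \cite[Lemma~D.2]{Chen202567l}: a Lipschitz bound on $\theta^\nu$, a H\"olderian error bound on $h_\mu$, then minimizing $\rho c\,d^\eta-\bar L d$ over $d\ge 0$, which is what you write out. Your three caveats are accurate and are what ``regarding $x^{-\nu}$ as a fixed parameter'' implicitly requires: the error bound must hold for the slice $h_\mu(\cdot,x^{-\nu})$, with constants possibly depending on $x^{-\nu}$, rather than for the joint function; the displayed $\varepsilon_\rho$ uses the convention $h_\mu\ge d^\eta/\xi$ instead of Definition~\ref{def:Holderian.EB}; and Assumption~\ref{asmp:subanalyticity} is needed in addition to Assumption~\ref{asmp}.
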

\begin{proof}
	The assertion can be easily derived by regarding $x^{-\nu}$ as a
	fixed parameter, as demonstrated in \cite[Lemma~D.2]{Chen202567l}.
\end{proof}

The following theorem states that if a point is an approximate $\varepsilon$-NE of $\NEP$, then
it is also that of $\hatRGNEP$ for some $\delta$ depending on the penalty parameter $\rho$.

\begin{theorem}\label{thm:relations.NE}
	For $\varepsilon,\rho>0$,
    let $w_\rho \in W$ be an $\varepsilon$-NE of the $\rho$-penalized NEP and assume $\hat{W}^\nu_0(x^{-\nu}_\rho)\neq\emptyset$ for all $\nu$.
	Then, there exists $\delta_\rho\coloneq(\delta^1_\rho,\dots,\delta^N_\rho)\in\Re^N_{++}$ such that $w_\rho$ is
	the $\varepsilon$-GNE of $\hatRGNEP[\delta_{\rho}]$; that is,
	$$
	\theta^\nu(w^\nu_\rho,x^{-\nu}_\rho)\le \theta^\nu(w^\nu,x^{-\nu}_\rho)+\varepsilon\quad\forall w^\nu\in \hat{W}^\nu_{\delta_\rho^\nu}(x^{-\nu}_\rho).
	$$
\end{theorem}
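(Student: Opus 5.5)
The natural choice is $\delta^\nu_\rho \coloneq h_\mu(w^\nu_\rho,x^{-\nu}_\rho)$, possibly enlarged to stay strictly positive. With this choice $w_\rho$ is automatically feasible, and for any feasible competitor $w^\nu$ the penalty term of $w^\nu$ is at most that of $w_\rho$. The $\varepsilon$-NE inequality then transfers directly from $\Theta^\nu_\mu$ to $\theta^\nu$. The proof is therefore a feasibility check followed by a one-line subtraction; Lemma~\ref{lem:gap.Theta} is not needed for this statement, although it would give the quantitative bound on $\delta_\rho$.

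\textbf{Step 1 (choice of $\delta_\rho$).} Fix $\nu$ and set $\delta^\nu_\rho \coloneq h_\mu(w^\nu_\rho,x^{-\nu}_\rho)$. This is nonnegative by Proposition~\ref{prop:NIfunc}(\ref{prop:NIfunc:item1}), since $x_\rho\in X$ and $\hat{y}^\nu_\rho\in Y$. Because $w_\rho\in W$ and the constraint holds with equality, $w^\nu_\rho\in\hat{W}^\nu_{\delta^\nu_\rho}(x^{-\nu}_\rho)$. Hence $w_\rho\in\hat{W}_{\delta_\rho}(x_\rho)$, which is the feasibility required in Definition~\ref{def:approx.GNE}.

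\textbf{Step 2 (positivity of $\delta_\rho$).} The definition demands $\delta_\rho\in\Re^N_{++}$, and the only delicate case is $h_\mu(w^\nu_\rho,x^{-\nu}_\rho)=0$. Two choices could be used there.
\begin{itemize}
\item Retain $\delta^\nu_\rho=0$ and note that the argument below still works, so the statement holds with $\delta\in\Re^N_+$.
\item Bound $\delta$ from above instead: use the $\varepsilon$-NE inequality against $u^\nu(w^\nu_\rho)$ together with Lemma~\ref{lem:gap.Theta}. If $h_\mu(w^\nu_\rho,x^{-\nu}_\rho)=0$, then $\varepsilon$-optimality still holds on $\hat{W}^\nu_0$, and the stated inequality persists on $\hat{W}^\nu_\delta$ only if the comparison is made with $\delta=h_\mu(w_\rho^\nu,\cdot)$ itself.
\end{itemize}
I would therefore keep $\delta^\nu_\rho=h_\mu(w^\nu_\rho,x^{-\nu}_\rho)$ and use the first option, remarking that zero components are harmless because the proof never uses strict positivity. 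This bookkeeping is the main obstacle. The hypothesis $\hat{W}^\nu_0(x^{-\nu}_\rho)\neq\emptyset$ is used only to invoke Lemma~\ref{lem:gap.Theta}, via $\theta^\nu(w_\rho^\nu)+\rho h_\mu(w_\rho^\nu)\le \theta^\nu(u^\nu)+\varepsilon$. Combined with $\theta^\nu(w^\nu_\rho)-\theta^\nu(u^\nu)\ge -\bar{L}\,\dist(w^\nu_\rho,\hat{W}^\nu_0)$ and the HEB, this yields $\delta^\nu_\rho\to 0$ as $\rho\to\infty$. I would record that bound as a remark.

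\textbf{Step 3 (transfer of the inequality).} Take any $w^\nu\in\hat{W}^\nu_{\delta^\nu_\rho}(x^{-\nu}_\rho)\subset W^\nu$. The $\varepsilon$-NE property gives
\[
\theta^\nu(w^\nu_\rho,x^{-\nu}_\rho)+\rho h_\mu(w^\nu_\rho,x^{-\nu}_\rho)\le \theta^\nu(w^\nu,x^{-\nu}_\rho)+\rho h_\mu(w^\nu,x^{-\nu}_\rho)+\varepsilon .
\]
Since $h_\mu(w^\nu,x^{-\nu}_\rho)\le\delta^\nu_\rho=h_\mu(w^\nu_\rho,x^{-\nu}_\rho)$ and $\rho>0$, the penalty terms can be cancelled, leaving $\theta^\nu(w^\nu_\rho,x^{-\nu}_\rho)\le\theta^\nu(w^\nu,x^{-\nu}_\rho)+\varepsilon$. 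Doing this for every $\nu$ completes the argument.
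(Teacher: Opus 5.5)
Your proposal is correct and is essentially the paper's proof. The paper likewise sets $\delta^\nu_\rho=h_\mu(w^\nu_\rho,x^{-\nu}_\rho)$ and cancels the penalty terms in the $\varepsilon$-NE inequality; its detour through Lemma~\ref{lem:gap.Theta} only gives the bound $\delta^\nu_\rho\le(\varepsilon_{\rho'}+\varepsilon)/(\rho-\rho')$, which, as you note, the existence claim does not need.

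Your caveat about zero components applies equally to the paper's proof, which also never ensures $\delta_\rho\in\Re^N_{++}$, and the weakening to $\Re^N_+$ is genuinely necessary. Take $N=M=1$, $X=Y=[0,1]$, $\gamma(x,y)=y$, $\theta(x,y)=x-y$, $\mu=1$, $\rho\ge 2$ and $0<\varepsilon\le 1$. Then $h_\mu(x,y)=y-y^2/2$ and the minimum of $\Theta$ over $W$ is $0$, so $w_\rho=(\varepsilon,0)$ is an $\varepsilon$-NE. Yet for every $\delta>0$ the point $(0,\min\{\delta,1\})$ lies in $\hat{W}^1_{\delta}$ and has $\theta$ strictly below $\theta(w_\rho)-\varepsilon$, so no strictly positive $\delta$ works.
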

\begin{proof}
	By the assumption $\hat{W}^\nu_0(x^{-\nu}_\rho)\neq\emptyset$, we have
	\begin{align*}
		& \theta^\nu(w^\nu_\rho,x^{-\nu}_\rho)+\rho h_\mu(w^\nu_\rho,x^{-\nu}_\rho)\le \min_{w^\nu\in W^\nu}\theta^\nu(w^\nu,x^{-\nu}_\rho)+\rho h_\mu(w^\nu,x^{-\nu}_\rho)+\varepsilon \\
		\le & \min_{w^\nu\in \hat{W}^\nu_0(x^{-\nu}_\rho)}\theta^\nu(w^\nu,x^{-\nu}_\rho)+\rho h_\mu(w^\nu,x^{-\nu}_\rho) +\varepsilon \\
		\le & \theta^\nu(w^\nu,x^{-\nu}_\rho)+\varepsilon\quad\forall w^\nu\in \hat{W}^\nu_0(x^{-\nu}_\rho),
	\end{align*}
    where the second inequality follows from $\hat{W}^\nu_0(x^{-\nu}_\rho)\subset W^\nu$.
	It follows from Lemma~\ref{lem:gap.Theta} that for $\rho\neq\rho'>0$, we have
	$$
	\theta^\nu(w^\nu_\rho,x^{-\nu}_\rho)+\rho' h_\mu(w^\nu_\rho,x^{-\nu}_\rho)-\theta^\nu(u^\nu(w^\nu),x^{-\nu}_\rho) \ge -\varepsilon_{\rho'}.
	$$
    Combining the above two inequalities yields
	$$
	(\rho-\rho')h_\mu(w^\nu_\rho,x^{-\nu}_\rho)\le \varepsilon_{\rho'}+\varepsilon+\theta^\nu(w^\nu,x^{-\nu}_\rho)-\theta^\nu(u^\nu(w^\nu),x^{-\nu}_\rho)\quad\forall w^\nu\in \hat{W}^\nu_0(x^{-\nu}_\rho),
	$$
    wherein 
	letting $w^\nu=u^\nu(w^\nu)$ 
    leads to 
	$$
    \delta^\nu_\rho
     \coloneq h_\mu(w^\nu_\rho,x^{-\nu}_\rho)
    \le\frac{\varepsilon_{\rho'}+\varepsilon}{\rho-\rho'}.$$
	Furthermore, choosing arbitrary $w^\nu_{\delta^\nu_\rho}\in W^\nu$ such that
	$h_\mu(w^\nu_{\delta^\nu_\rho},x^{-\nu}_\rho)\le\delta^\nu_\rho$,
	we have
	\begin{align*}
	\theta^\nu(w^\nu_\rho,x^{-\nu}_\rho)+\rho h_\mu(w^\nu_\rho,x^{-\nu}_\rho)
	&\le \theta^\nu(w^\nu_{\delta^\nu_\rho},x^{-\nu}_\rho)+\rho
	h_\mu(w^\nu_{\delta^\nu_\rho},x^{-\nu}_\rho)+\varepsilon\\
	&\le\theta^\nu(w^\nu_{\delta^\nu_\rho},x^{-\nu}_\rho)+\rho\delta^\nu_\rho +\varepsilon,
	\end{align*}
    which together with $\delta^\nu_\rho=h_\mu(w^\nu_\rho,x^{-\nu}_\rho)$ yields 
	$$
	\theta^\nu(w^\nu_\rho,x^{-\nu}_\rho)\le\theta^\nu(w^\nu_{\delta^\nu_\rho},x^{-\nu}_\rho)+\varepsilon.
	$$
    Since 
	this inequality holds for any $w^\nu_{\delta^\nu_\rho}\in W^\nu$
	such that $h_\mu(w^\nu_{\delta^\nu_\rho},x^{-\nu}_\rho)\le\delta^\nu_\rho$, we obtain
	$$
	\theta^\nu(w^\nu_\rho,x^{-\nu}_\rho)\le\min_{w^\nu\in \hat{W}^\nu_{\delta^\nu_\rho}(x^{-\nu}_\rho)}\theta^\nu(w^\nu,x^{-\nu}_\rho)+\varepsilon
	$$
    for any $\nu$.
    Recalling
    $\delta_\rho=(\delta^1_\rho,\dots,\delta^N_\rho)$, 
    $w_\rho$ is an
        $\varepsilon$-approximate GNE for $\hatRGNEP[\delta_\rho]$.
\end{proof}

Theorem~\ref{thm:relations.NE} means that for $\{\varepsilon_k\}$ and $\{\rho_k\}$ with $\varepsilon_k\to 0$ and $\rho_k\to\infty$,
the convergent point of the sequence $\{w_k\}$ of 
$\varepsilon_k$-NE of $\rho_k$-penalized NEP is a NoLFNE.

Before proceeding to the next section, we show
Assumptions~\ref{asmp:compact.follower} and~\ref{asmp:smooth.follower} are sufficient conditions 
for $\hat{W}^\nu_0(x^{-\nu}_{\rho_k})\neq\emptyset$.
\begin{proposition}\label{prop:suff.nonempty.W}
	Suppose that Assumptions~\ref{asmp:compact.follower} and~\ref{asmp:smooth.follower} hold.
	Then, we have $\hat{W}^\nu_0(x^{-\nu})\neq\emptyset$ for any $x^{-\nu}$.
\end{proposition}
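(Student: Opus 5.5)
Since $\hat{W}^\nu_0(x^{-\nu})=\{(x^\nu,\hat{y}^\nu)\mid x^\nu\in X^\nu,\ \hat{y}^\nu\in\hat{S}(x^\nu,x^{-\nu})\}$ and $X^\nu$ is nonempty, it is enough to fix any $x^\nu\in X^\nu$, write $x=(x^\nu,x^{-\nu})$, and show $\hat{S}(x)\neq\emptyset$. By Proposition~\ref{prop:first-order.condition}, $\hat{S}(x)=T(x)$. So I will show that the first-order system
\[
0\in\nabla_{y^\omega}\gamma^\omega(x,y^\omega,y^{-\omega})+\mathcal{N}_{Y^\omega}(y^\omega),\qquad \omega=1,\dots,M,
\]
has a solution $y\in Y$. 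This is exactly a variational inequality $\mathrm{VI}(Y,F_x)$ with $F_x(y)\coloneq(\nabla_{y^\omega}\gamma^\omega(x,y))_{\omega=1}^M$. The reason is that $Y$ is a product, so $\mathcal{N}_Y(y)=\prod_\omega\mathcal{N}_{Y^\omega}(y^\omega)$.

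The key step is an existence argument by Brouwer's fixed point theorem. Define the map $G\colon Y\to Y$ by $G(y)\coloneq P_Y(y-F_x(y))$, where $P_Y$ is the Euclidean projection onto $Y$. By Assumption~\ref{asmp:compact.follower}, $Y$ is nonempty, convex and compact (nonemptiness is implicit in the setting). By Assumption~\ref{asmp:smooth.follower}, $F_x$ is continuous on $Y$, and $P_Y$ is nonexpansive, so $G$ is a continuous self-map of a convex compact set. Brouwer then gives a fixed point $y^*=P_Y(y^*-F_x(y^*))$. The projection characterization turns this into $-F_x(y^*)\in\mathcal{N}_Y(y^*)$, and taking components gives $y^*\in T(x)$.

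As an alternative that avoids the variational inequality, I could use Proposition~\ref{prop:existence.NE}. Consider the auxiliary NEP in which follower $\omega$ minimizes the regularized cost $z^\omega\mapsto\gamma^\omega(x,z^\omega,y^{-\omega})+\frac{\ell}{2}\|z^\omega\|^2$ over $Y^\omega$. By Proposition~\ref{prop:new}(\ref{item:wc}) this cost is convex, but its stationarity condition is shifted by $\ell z^\omega$, so the Brouwer route is cleaner and I will use it.

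I expect essentially no serious obstacle. The only points needing care are checking that the normal cone of the product set splits componentwise, and noting that the conclusion holds for every $x^{-\nu}$, not only for $x^{-\nu}\in X^{-\nu}$. The latter requires $\gamma^\omega$ to be $C^1$ near $\{x\}\times Y$. If needed, I will restrict to $x^{-\nu}$ with $x\in X$, which is the case used in Theorem~\ref{thm:relations.NE}.
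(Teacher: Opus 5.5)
Your proposal is correct and follows essentially the same route as the paper: you reduce to showing $\hat{S}(x)=T(x)\neq\emptyset$, recognize $T(x)$ as the solution set of a variational inequality on the compact convex set $Y$ with the continuous map $F_x$, and conclude existence; the only difference is that the paper cites Facchinei--Pang (Corollary~2.2.5), while you make the argument self-contained by applying Brouwer's theorem to $y\mapsto P_Y(y-F_x(y))$. Your caveat that one really needs $x=(x^\nu,x^{-\nu})\in X$ is well taken, since that is what makes $F_x$ continuous and Proposition~\ref{prop:first-order.condition} applicable, and the paper's proof uses this restriction without stating it.
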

\begin{proof}
	It suffices to show that $\hat{S}(x)\neq\emptyset$.
	By the convexity of $Y$, the following generalized equation 
	is equivalently reduced to the variational inequality problem:
	\[
		0\in\nabla_{y^\omega}\gamma^\omega(x,y^\omega,y^{-\omega})+
		\mathcal{N}_{Y^\omega}(y^\omega),\quad \omega=1,\dots,M.
	\]
	Since $\nabla_{y^\omega}\gamma^\omega(x,\cdot,y^{-\omega})$ is continuous,
	the solution set of the above equation is nonempty and compact
	from~\cite[Corollary~2.2.5]{Facchinei2004}.
\end{proof}

\begin{remark}
	From Proposition~\ref{prop:suff.nonempty.W}, the assumption
	$\hat{W}^\nu_0(x^{-\nu}_{\rho_k})\neq\emptyset$ is natural.
	Another sufficient condition is that $S(x)\neq\emptyset$ for any $x\in X$,
	and this automatically holds from Proposition~\ref{prop:existence.NE}
	if $\gamma^\omega(x,\cdot,y^{-\omega})$ is convex for arbitrary fixed $(x,y^{-\omega})$ 
	under Assumption~\ref{asmp:compact.follower} for all $\omega\in\{1,\dots,M\}$.
\end{remark}


\subsection{Variational equilibrium of $\rho$-penalized NEP and its feasibility}
\label{ssec:VE}

Since $\rho$-penalized NEP is nonconvex, it may not have any NE in general.
Hence, we introduce a weaker concept of equilibrium referred to as a
\emph{variational equilibrium}. First, let us define the variational equilibrium for penalized NEP.

\begin{definition}[Variational equilibrium of $\NEP$]\label{def:VE}
	The point $w^*$ is said to be a \emph{variational equilibrium} (VE) of
	the problem $\NEP$ if the following generalized equation holds for all $\nu\in\{1,\dots,N\}$:
	\[
		0\in\nabla_{w^\nu}\Theta^\nu_\mu(w^{*,\nu},x^{*,-\nu};\rho)+
		\mathcal{N}_{W^\nu}(w^{*,\nu}).
	\]
\end{definition}

Since $W^\nu$ is compact and convex, and $\nabla_{w^\nu}\Theta^\nu_\mu$ is continuous,
the existence of the VE for $\NEP$ is 
shown in the same manner as Proposition~\ref{prop:suff.nonempty.W}.

Now, we discuss the first-order optimality condition, or necessary condition
for NoLFNE of $\hatGNEP$.
In bilevel optimization, the optimality condition is discussed over a couple of decades~\cite{Dempe2007,Dempe2014,Ye1995}.
Based on such literature, we extend their concept to MLFG.

Next, we introduce  \emph{partial calmness} for~\eqref{prob:reduced.relaxed},
which serves as a constraint qualification-like condition.
It is known that~\eqref{prob:reduced.relaxed} does not satisfy 
standard constraint qualifications for nonlinear optimization, such as the
Mangasarian--Fromovitz (MFCQ) or the
constant-rank constraint qualifications (CRCQ),
due to $h_\mu(x,y)\le 0$~\cite[Proposition~3.2]{Ye1995}.
Then, Ye and Zhu~\cite{Ye1995} proposed a weaker version of
calmness~\cite[Definition~6.4.1]{Clarke1990} for level-reduced bilevel optimization.

To introduce the partial calmness condition of our context,
we let $p^\nu\in \Re$ for each $\nu\in \{1,\ldots,N\}$ and consider the following perturbed problem for~\eqref{prob:reduced.relaxed}:
\begin{align}\label{prob:reduced.perturbed}
\min_{w^\nu}\quad \theta^\nu(w^\nu,x^{-\nu})  \qquad
\text{s.t.} \quad w^\nu\in W^\nu,\ h_\mu(w^\nu,x^{-\nu})+p^\nu=0.
\end{align}
The difference between~\eqref{prob:reduced.relaxed} and~\eqref{prob:reduced.perturbed} lies in the perturbation of the constraint $h_\mu(w^\nu,x^{-\nu})\le 0$. 
The partial calmness condition is defined as follows.

\begin{definition}[Partial calmness]\label{def:partially.calm}
	We say that~\eqref{prob:reduced.relaxed} is \emph{partially calm} at $\bar{w}^\nu\in\hat{W}^\nu_0(\bar{x}^{-\nu})$ for a fixed $\bar{x}^{-\nu}$ if there exist $\rho^\nu>0$
	and a neighborhood $U^\nu\in\Re^{n_\nu+m}\times\Re$ of $(\bar{w}^\nu,0)\in\Re^{n_\nu+m}\times\Re$ such that for all
	$(w^\nu,p^\nu)\in U^\nu$ that is feasible to~\eqref{prob:reduced.perturbed}, the following inequality holds:
	\begin{align}\label{ieq:partially.calm}
		\theta^\nu(w^\nu,x^{-\nu})-\theta^\nu(\bar{w}^\nu,x^{-\nu})+\rho^\nu|p^\nu| \ge 0.
	\end{align}
	If the above statement holds for all $\nu\in\{1,\dots,N\}$, we say that constrained GNEP\\
    ($\hatGNEP$) is partially calm at $\bar{w}\in\hat{W}_0(\bar{x})$.
\end{definition}

Recalling the first assertion of 
Proposition~\ref{prop:NIfunc}, i.e., $h_\mu\ge 0$, note that inequality~\eqref{ieq:partially.calm} can also be written as follows if 
$w^\nu$ is feasible to~\eqref{prob:reduced.perturbed}:
$$
\theta^\nu(w^\nu,x^{-\nu})-\theta^\nu(\bar{w}^\nu,x^{-\nu})+\rho^\nu h_\mu(w^\nu,x^{-\nu})\ge 0.
$$
This inequality means that the variation in the leader $\nu$'s objective
from $\bar{w}^\nu$ to ${w}^\nu$ being feasible to \eqref{prob:reduced.perturbed}, $\theta^\nu({w}^\nu,x^{-\nu})-\theta^\nu(\bar{w}^\nu,x^{-\nu})$, 
is bounded below by $\rho^\nu$ times some perturbation for $-h_\mu$.
It can also be seen that the partial calmness condition, in practice, requires 
$\hatGNEP$ to admit a Lagrange multiplier associated with the inequality constraint $h_\mu(w^\nu,x^{-\nu})\le 0$ for each leader's reduced problem~\eqref{prob:reduced.relaxed}.

The following lemma implies that for a given point $x^*\in X$, 
if $h_\mu(\cdot,x^{*,-\nu})$ satisfies HEB for all $\nu$ and the objective function
is locally Lipschitz continuous around a feasible point of $h_\mu(w^\nu,x^{*,-\nu})\le 0$,~\eqref{prob:reduced.relaxed} is partially calm at $w^{*,\nu}\in \hat{W}^\nu_0(x^{*,-\nu})$.

\begin{lemma}\label{lem:HEB.partially.calm}
	Let $x^*\in X$ be given.
    Suppose that $\hat{W}^\nu_0(x^{*,-\nu})\neq\emptyset$ for all $\nu\in\{1,\dots,N\}$, and Assumption~\ref{asmp} holds.
	For $\varepsilon<1$, consider $p^\nu\in\Re$ with $|p^\nu|\le\varepsilon$ and any $w^\nu\in W^{\nu}$ such that
\begin{align*}
h_\mu(w^\nu,x^{*,-\nu})+p^\nu=0
\mbox{ and }
\|{w}^{*,\nu} - w^\nu\|\le \varepsilon
\end{align*}
for some 
	\begin{align*}
	w^{*,\nu}\in\argmin_{\bar{w}^\nu\in W^\nu}\{ \|\bar{w}^\nu - w^\nu\| \mid h_\mu(\bar{w}^\nu,x^{*,-\nu})\le 0\}.
\end{align*}
	If $h_\mu(\cdot,x^{*,-\nu})$ satisfies the $(\xi,\eta)$-HEB\footnote{Recalling Proposition~\ref{prop:NIfunc}, note that from the assumption $\hat{W}^\nu_0(x^{*,-\nu})\neq\emptyset$, we have $\min_{w^\nu} h_\mu(w^\nu,x^{*,-\nu}) = 0$.} that $h_\mu(w^\nu,x^{*,-\nu})\ge\xi\|w^\nu-w^{*,\nu}\|^\eta$
	with $\eta\le 1$,
	then~\eqref{prob:reduced.relaxed} is partially calm at $w^{*,\nu}$.
\end{lemma}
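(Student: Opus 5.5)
The plan is to verify the partial calmness inequality~\eqref{ieq:partially.calm} directly, with the neighborhood $U^\nu \coloneq \{(w^\nu,p^\nu)\mid \|w^\nu-w^{*,\nu}\|\le\varepsilon,\ |p^\nu|\le\varepsilon\}$ and a penalty constant $\rho^\nu$ built from the Lipschitz constant of $\theta^\nu$ and the HEB constant $\xi$. The two ingredients are the Lipschitz continuity of $\theta^\nu$ on the compact set $X\times Y$ and the HEB inequality, which converts the constraint violation $|p^\nu|$ into a bound on the distance to the feasible set.

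\textbf{Step 1 (Lipschitz bound).} By Assumption~\ref{asmp} and Proposition~\ref{prop:new}, $\theta^\nu$ is $L$-smooth. Hence, by Lemma~\ref{lem:smooth.lipschitz}, it is $\bar{L}$-Lipschitz on $X\times Y$, as recorded in Lemma~\ref{lem:gap.Theta}. For $w^\nu\in W^\nu$ and $w^{*,\nu}\in\hat{W}^\nu_0(x^{*,-\nu})\subset W^\nu$, this gives
\[
\theta^\nu(w^\nu,x^{*,-\nu})-\theta^\nu(w^{*,\nu},x^{*,-\nu})\ge -\bar{L}\|w^\nu-w^{*,\nu}\|.
\]

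\textbf{Step 2 (HEB converts distance into violation).} Feasibility for~\eqref{prob:reduced.perturbed} gives $h_\mu(w^\nu,x^{*,-\nu})=-p^\nu$. Since $h_\mu\ge0$ by Proposition~\ref{prop:NIfunc}, this value equals $|p^\nu|$. The HEB hypothesis then yields $|p^\nu|\ge\xi\|w^\nu-w^{*,\nu}\|^\eta$. Because $\|w^\nu-w^{*,\nu}\|\le\varepsilon<1$ and $\eta\le1$, we have $\|w^\nu-w^{*,\nu}\|\le\|w^\nu-w^{*,\nu}\|^\eta$, and therefore
\[
\|w^\nu-w^{*,\nu}\|\le |p^\nu|/\xi.
\]

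\textbf{Step 3 (conclusion).} Combining the two steps,
\[
\theta^\nu(w^\nu,x^{*,-\nu})-\theta^\nu(w^{*,\nu},x^{*,-\nu})+\rho^\nu|p^\nu|\ge(\rho^\nu-\bar{L}/\xi)|p^\nu|\ge0
\]
whenever $\rho^\nu\ge\bar{L}/\xi$. This is exactly~\eqref{ieq:partially.calm} at $w^{*,\nu}$. Doing this for every $\nu$ gives partial calmness of $\hatGNEP$ as well.

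\textbf{Main obstacle.} The delicate step is Step 2. The HEB in the statement measures distance to the chosen projection $w^{*,\nu}$, but partial calmness must be checked at a fixed point $w^{*,\nu}$ for all nearby feasible $(w^\nu,p^\nu)$. So I must read the hypothesis as applying to every perturbed-feasible $w^\nu$ in the $\varepsilon$-ball around that fixed $w^{*,\nu}$, with $\|w^\nu-w^{*,\nu}\|$ dominating the distance used in Step 1. If instead the nearest feasible point $u^\nu(w^\nu)$ differed from $w^{*,\nu}$, I would compare with $u^\nu(w^\nu)$. That route needs an extra argument that $\theta^\nu(u^\nu(w^\nu),\cdot)\ge\theta^\nu(w^{*,\nu},\cdot)$, which requires $w^{*,\nu}$ to be a local minimizer of~\eqref{prob:reduced.relaxed}. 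I would therefore adopt the first reading, under which the hypothesis is used exactly as stated. The restriction $\eta\le1$ is what makes the linear bound in Step 2 possible; for $\eta>1$ only the weaker $\varepsilon_\rho$-type estimate of Lemma~\ref{lem:gap.Theta} would be available.
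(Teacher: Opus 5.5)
Your proposal is correct and follows the paper's proof essentially line by line. The HEB with $\eta\le 1$ and $\|w^\nu-w^{*,\nu}\|<1$ gives $\|w^\nu-w^{*,\nu}\|\le|p^\nu|/\xi$, and the Lipschitz continuity of $\theta^\nu$ (Lemma~\ref{lem:smooth.lipschitz}) then yields partial calmness with constant $\bar{L}/\xi$; your reading of the HEB hypothesis as holding at the fixed point $w^{*,\nu}$ for all nearby perturbed-feasible $w^\nu$ is exactly the one the paper's proof uses implicitly.
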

\begin{proof}
	By definition, for $w^\nu$ such that $\|w^\nu-w^{*,\nu}\|\le \varepsilon$ with $\varepsilon<1$, we have
	$$
	\varepsilon \ge |p^\nu|=h_\mu(w^\nu,x^{*,-\nu})\ge \xi\|w^\nu-w^{*,\nu}\|^\eta\ge\xi\|w^\nu-w^{*,\nu}\|,
	$$
	where the last inequality holds by the assumptions
	that $\|w^\nu-w^{*,\nu}\| <1$, $\xi > 0$ and $\eta\le 1$.
	Then, it follows that
	$$
	\theta^\nu(w^\nu,x^{*,-\nu})-\theta^\nu(w^{*,\nu},x^{*,-\nu})
	\ge -\bar{L}\|w^\nu-w^{*,\nu}\|
	\ge -\frac{\bar{L}}{\xi}|p^\nu|,
	$$
	where the first inequality holds from Lemma~\ref{lem:smooth.lipschitz}.
	Therefore, the partial calmness condition holds with constant $\bar{L}/\xi$.
\end{proof}

The assumption of the HEB with $\eta\le 1$ in Lemma~\ref{lem:HEB.partially.calm} requires that
$h_\mu(\cdot,x^{*,-\nu})$ converges to zero no faster than $\xi\|w^\nu-w^{*,\nu}\|^\eta$ with $\eta\le 1$.

As stated below, if $\hatGNEP$ is partially calm at some NoLFNE (see Definition~\ref{def:normal.LFNE}),
then the first-order optimality conditions hold for all~$\nu$.

\begin{theorem}\label{thm:optimality}
	Let $(x^*,y^*)\in X\times \hat{S}(x^*)$ be a NoLFNE and $w^*\in X\times \hat{S}(x^*)^N$.
	Suppose that Assumption~\ref{asmp} holds and that
    $\hatGNEP$ is partially calm at $w^*$.
	Then, there exists $\rho^*>0$ such that for all $\nu\in\{1,\dots,N\}$, 
	\begin{align}\label{ge:optimality.condition}
		0\in\nabla_{w^\nu}\theta^\nu(w^{*,\nu},x^{*,-\nu})+
		\rho\nabla_{w^\nu}h_\mu(w^{*,\nu},x^{*,-\nu})+
		\mathcal{N}_{W^\nu}(w^{*,\nu})
	\end{align}
    holds for any $\rho> \rho^*$.
\end{theorem}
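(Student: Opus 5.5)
The plan is to fix $\nu$ and freeze $x^{-\nu}=x^{*,-\nu}$. This turns~\eqref{prob:reduced.relaxed} into an ordinary single-player problem, and we then follow the Ye--Zhu exact-penalty argument. The first step is to show that partial calmness at $w^{*,\nu}$, with constant $\rho^\nu$ and neighborhood $U^\nu$, makes $w^{*,\nu}$ a local minimizer of the penalized function $\Theta^\nu_\mu(\cdot,x^{*,-\nu};\rho)$ over $W^\nu$ for every $\rho\ge\rho^\nu$.

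Take any $w^\nu\in W^\nu$ close to $w^{*,\nu}$ and set $p^\nu\coloneq -h_\mu(w^\nu,x^{*,-\nu})$. By Proposition~\ref{prop:NIfunc}, $h_\mu\ge 0$, so $|p^\nu|=h_\mu(w^\nu,x^{*,-\nu})$. By continuity of $h_\mu$ (Lemma~\ref{lem:lipschitz.hmu}), together with $h_\mu(w^{*,\nu},x^{*,-\nu})=0$ from feasibility of the NoLFNE, the pair $(w^\nu,p^\nu)$ lies in $U^\nu$ whenever $w^\nu$ is in a small enough ball around $w^{*,\nu}$. The pair is feasible for~\eqref{prob:reduced.perturbed} by construction. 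Inequality~\eqref{ieq:partially.calm} then reads
\[
\theta^\nu(w^\nu,x^{*,-\nu})+\rho^\nu h_\mu(w^\nu,x^{*,-\nu})\ge\theta^\nu(w^{*,\nu},x^{*,-\nu})+\rho^\nu h_\mu(w^{*,\nu},x^{*,-\nu}).
\]
Since $h_\mu\ge0$, the same inequality holds with $\rho^\nu$ replaced by any $\rho\ge\rho^\nu$. Hence $w^{*,\nu}$ locally minimizes $\Theta^\nu_\mu(\cdot,x^{*,-\nu};\rho)$ on the convex set $W^\nu=X^\nu\times Y$.

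The second step uses smoothness. For $\mu<\ell^{-1}$, $h_\mu$ is continuously differentiable by Proposition~\ref{prop:new} and the Danskin formula given after Proposition~\ref{prop:NIfunc}, and $\theta^\nu$ is $C^1$ by Assumption~\ref{asmp}. The standard first-order necessary condition for minimizing a $C^1$ function over a closed convex set therefore gives
\[
-\nabla_{w^\nu}\Theta^\nu_\mu(w^{*,\nu},x^{*,-\nu};\rho)\in\mathcal{N}_{W^\nu}(w^{*,\nu}),
\]
which is exactly~\eqref{ge:optimality.condition}. We then set $\rho^*\coloneq\max_\nu\rho^\nu$, or anything slightly below it, so that every $\rho>\rho^*$ works simultaneously for all $\nu$.

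The main point needing care is the first step. The neighborhood $U^\nu$ constrains both $w^\nu$ and $p^\nu$, so we must check that the induced perturbation $p^\nu=-h_\mu(w^\nu,x^{*,-\nu})$ is small. This is where continuity of $h_\mu$ and $h_\mu(w^{*,\nu},x^{*,-\nu})=0$ enter. We must also confirm that the sign convention $h_\mu+p^\nu=0$ together with $h_\mu\ge0$ turns $\rho^\nu|p^\nu|$ into exactly the penalty term. Everything else is routine.
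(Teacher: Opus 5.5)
Your proof is correct and follows the paper's route. Partial calmness makes $w^{*,\nu}$ a local minimizer of $\theta^\nu(\cdot,x^{*,-\nu})+\rho h_\mu(\cdot,x^{*,-\nu})$ over the convex set $W^\nu$, and the first-order condition for that penalized problem is exactly the desired inclusion; the only difference is that you verify the exact-penalization step directly instead of citing \cite[Proposition~2.2]{Ye1997}. One small slip: you only prove the inclusion for $\rho\ge\rho^\nu$, so you must take $\rho^*\ge\max_\nu\rho^\nu$ (as the paper does), not ``slightly below'' this maximum.
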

\begin{proof}
	Since~\eqref{prob:reduced.relaxed} is partially calm at $w^{*,\nu}$,
	\cite[Proposition~2.2]{Ye1997} ensures that there exists $\rho^*_\nu>0$ such that 
	for all $\rho^\nu>\rho^*_\nu>0$, $w^{*,\nu}$ solves
	$$
	\min_{w^\nu\in W^\nu}\quad\theta^\nu(w^\nu,x^{*,-\nu})+\rho^\nu h_\mu(w^\nu,x^{*,-\nu}).
	$$
	Hence, the first-order optimality condition for the above problem under the convexity of $W^\nu$ is obtained as follows:
	$$
		0\in\nabla_{w^\nu}\theta^\nu(w^{*,\nu},x^{*,-\nu})+
		\rho^\nu\nabla_{w^\nu}h_\mu(w^{*,\nu},x^{*,-\nu})+
		\mathcal{N}_{W^\nu}(w^{*,\nu}).
	$$
	Therefore, the generalized equation holds for all $\nu$.
    Finally, defining $\rho^*>\max_{\nu=1,\dots,N} \rho^*_\nu$ leads to~\eqref{ge:optimality.condition}.
\end{proof}

\begin{corollary}\label{cor:HEB.VE}
	Let $(x^*,y^*)\in X\times \hat{S}(x^*)$ be a NoLFNE and $w^*\in X\times \hat{S}(x^*)^N$.
	Suppose that Assumption~\ref{asmp} and the assumptions of Lem\-ma~\ref{lem:HEB.partially.calm} hold. 
	Then, there exists $\rho^*>0$ such that~\eqref{ge:optimality.condition} holds
	for all $\nu\in\{1,\dots,N\}$.
\end{corollary}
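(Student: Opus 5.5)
The corollary should follow by chaining Lemma~\ref{lem:HEB.partially.calm} into Theorem~\ref{thm:optimality}. The only real work is to check that the hypotheses of the lemma, imposed for each leader $\nu$ at the given NoLFNE, yield partial calmness of the whole constrained GNEP at $w^*$ in the sense of Definition~\ref{def:partially.calm}.

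First, fix $\nu\in\{1,\dots,N\}$ and put $\bar{x}^{-\nu}=x^{*,-\nu}$. Since $(x^*,y^*)$ is a NoLFNE, $w^{*,\nu}\in\hat{W}^\nu_0(x^{*,-\nu})$. This set is therefore nonempty, and $w^{*,\nu}$ is its own nearest feasible point, so it qualifies as the projection point appearing in Lemma~\ref{lem:HEB.partially.calm}.

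Next, invoke the HEB hypothesis with $\eta\le1$ on $h_\mu(\cdot,x^{*,-\nu})$. The lemma then gives, for every $(w^\nu,p^\nu)$ with $\|w^\nu-w^{*,\nu}\|\le\varepsilon<1$, $|p^\nu|\le\varepsilon$, and $h_\mu(w^\nu,x^{*,-\nu})+p^\nu=0$, the estimate
\[
\theta^\nu(w^\nu,x^{*,-\nu})-\theta^\nu(w^{*,\nu},x^{*,-\nu})+\tfrac{\bar{L}}{\xi}|p^\nu|\ge0.
\]
The $\varepsilon$-box around $(w^{*,\nu},0)$ contains an open neighborhood $U^\nu$ of that point. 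Hence \eqref{ieq:partially.calm} holds with $\rho^\nu=\bar{L}/\xi$, and \eqref{prob:reduced.relaxed} is partially calm at $w^{*,\nu}$. Here $\bar{L}$ is the Lipschitz constant of $\theta^\nu$ on the compact set $X\times Y$, which exists by Lemmas~\ref{lem:C2.imply.smooth} and~\ref{lem:smooth.lipschitz} under Assumption~\ref{asmp}.

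Doing this for every $\nu$ shows that the constrained GNEP is partially calm at $w^*$. Theorem~\ref{thm:optimality} then supplies $\rho^*>0$ such that \eqref{ge:optimality.condition} holds for every $\nu$ and every $\rho>\rho^*$, which is the claim.

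The one step needing care is identifying $w^{*,\nu}$ with the projection point in Lemma~\ref{lem:HEB.partially.calm}. The lemma is phrased with $w^{*,\nu}$ as a nearest point to a given $w^\nu$, whereas partial calmness is required at the equilibrium point itself. I would handle this by reading the HEB hypothesis as the inequality $h_\mu(w^\nu,x^{*,-\nu})\ge\xi\|w^\nu-w^{*,\nu}\|^\eta$ for all nearby $w^\nu$, measured directly to the NoLFNE component. The lemma's proof uses only this inequality, so it applies verbatim.
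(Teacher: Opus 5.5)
Your chain (Lemma~\ref{lem:HEB.partially.calm} gives partial calmness at each $w^{*,\nu}$, then Theorem~\ref{thm:optimality} applies) is how the paper intends the corollary, and you correctly spotted the delicate point. Your fix for that point, however, changes the hypothesis rather than proving the statement as given.

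In Lemma~\ref{lem:HEB.partially.calm}, $w^{*,\nu}$ is the projection of $w^\nu$ onto $\hat{W}^\nu_0(x^{*,-\nu})$, the zero set of $h_\mu(\cdot,x^{*,-\nu})\ge 0$ on $W^\nu$. So the hypothesis is the H\"olderian error bound of Definition~\ref{def:Holderian.EB}: $h_\mu(w^\nu,x^{*,-\nu})\ge\xi\,\dist(w^\nu,\hat{W}^\nu_0(x^{*,-\nu}))^\eta$. Measuring to the fixed equilibrium component is strictly stronger. It forces $h_\mu(w^\nu,x^{*,-\nu})>0$ for every $w^\nu\neq w^{*,\nu}$ nearby, so $w^{*,\nu}$ must be an isolated point of $\hat{W}^\nu_0(x^{*,-\nu})$.

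That typically fails. If $\hat{S}(x)=\{y(x)\}$ with $y$ continuous, this set is the graph of $y(\cdot,x^{*,-\nu})$ over $X^\nu$, which has no isolated points unless $X^\nu$ is a singleton. Under your reading the corollary becomes nearly vacuous.

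The telltale sign is that you use the NoLFNE property only through feasibility of $w^{*,\nu}$. Under the genuine error bound that cannot suffice. Setting $p^\nu=0$ in \eqref{ieq:partially.calm} shows that partial calmness at $\bar{w}^\nu$ forces $\bar{w}^\nu$ to be a local minimizer of \eqref{prob:reduced.relaxed}. The error bound, by contrast, involves $h_\mu$ alone and says nothing about $\theta^\nu$. The paper's proof of the lemma likewise compares $w^\nu$ only with its own projection and leaves the needed bridge unstated.

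The missing ingredient is the optimality of $w^{*,\nu}$ from Definition~\ref{def:normal.LFNE}. Take $(w^\nu,p^\nu)$ feasible for \eqref{prob:reduced.perturbed} with $\|w^\nu-w^{*,\nu}\|\le\varepsilon<1$, and let $u^\nu$ be a nearest point to $w^\nu$ in $\hat{W}^\nu_0(x^{*,-\nu})$.

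Since $w^{*,\nu}$ lies in that set, $\|w^\nu-u^\nu\|\le\|w^\nu-w^{*,\nu}\|<1$. The error bound with $\eta\le 1$ then gives $|p^\nu|=h_\mu(w^\nu,x^{*,-\nu})\ge\xi\|w^\nu-u^\nu\|$. Because $u^\nu$ is feasible for leader $\nu$'s problem, optimality gives $\theta^\nu(u^\nu,x^{*,-\nu})\ge\theta^\nu(w^{*,\nu},x^{*,-\nu})$. Hence
\[
\theta^\nu(w^\nu,x^{*,-\nu})-\theta^\nu(w^{*,\nu},x^{*,-\nu})
\ge\theta^\nu(w^\nu,x^{*,-\nu})-\theta^\nu(u^\nu,x^{*,-\nu})
\ge-\bar{L}\|w^\nu-u^\nu\|
\ge-\frac{\bar{L}}{\xi}|p^\nu|.
\]
This is partial calmness at $w^{*,\nu}$ with $\rho^\nu=\bar{L}/\xi$ under the hypothesis as actually stated. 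With this step in place of your reinterpretation, the rest of your argument (repeating for each $\nu$ and invoking Theorem~\ref{thm:optimality}) goes through unchanged.
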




In the rest of this section, we establish the relationship between
VE of $\NEP$ and  $\hatRGNEP$ and also show that the constraint $h_\mu(w^\nu,x^{-\nu})\le 0$
is approximately satisfied at a VE of $\NEP$.

\begin{definition}
    \label{def:VE.RGNEP}
    A point $w_\delta$ is called a \emph{VE} of $\hatRGNEP$ if there exists $\zeta^\nu_\delta\ge 0$ such that 
    $w^\nu_\delta\in\hat{W}^\nu_{\delta^\nu}(x^{-\nu}_\delta)$ satisfies
    \begin{align}\label{eq:VE.full}
    \begin{gathered}
        0\in\nabla_{w^\nu}\theta^\nu(w^\nu_\delta,x^{-\nu}_\delta)+\zeta^\nu_\delta \nabla_{w^\nu}h_\mu(w^\nu_\delta,x^{-\nu}_\delta)+\mathcal{N}_{W^\nu}(w^\nu_\delta),\\
        0\le \zeta^\nu_\delta \perp h_\mu(w^\nu_\delta,x^{-\nu}_\delta)-\delta^\nu\le 0,
    \end{gathered}
    \end{align}
    for all $\nu\in\{1,\dots,N\}$.
\end{definition}

The following theorem shows that the VE of $\NEP$ is also that of \\ $\hatRGNEP$.
\begin{theorem}\label{thm:VE.rel}
    Let $w_\rho\in W$ be a VE of $\NEP$, i.e.,
    \begin{align*}
        0\in \nabla_{w^\nu} \theta^\nu(w^\nu_\rho,x^{-\nu}_\rho)+
        \rho\nabla_{w^\nu} h_\mu(w^\nu_\rho,x^{-\nu}_\rho) + \mathcal{N}_{W^\nu}(w^\nu_\rho)
    \end{align*}
    for all $\nu\in\{1,\ldots,N\}$.
    Then, $w_\rho$ is a VE of $\hatRGNEP$ with $\delta^\nu=h_\mu(w^\nu_\rho,x^{-\nu}_\rho)$ for all $\nu$.
\end{theorem}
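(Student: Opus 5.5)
The plan is to verify Definition~\ref{def:VE.RGNEP} directly, choosing the multiplier $\zeta^\nu_\delta\coloneq\rho$ and the relaxation level $\delta^\nu\coloneq h_\mu(w^\nu_\rho,x^{-\nu}_\rho)$ for each $\nu\in\{1,\dots,N\}$. With these choices each condition in~\eqref{eq:VE.full} reduces to a property $w_\rho$ already has.

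First, feasibility. Since $w_\rho\in W$, we have $w^\nu_\rho\in W^\nu$. Moreover $h_\mu(w^\nu_\rho,x^{-\nu}_\rho)=\delta^\nu$, so the inequality $h_\mu\le\delta^\nu$ holds with equality. Hence $w^\nu_\rho\in\hat{W}^\nu_{\delta^\nu}(x^{-\nu}_\rho)$. We also need $\delta\in\Re^N_{++}$ if strict positivity is required by the definition. By Proposition~\ref{prop:NIfunc}\ref{prop:NIfunc:item1} we have $\delta^\nu\ge 0$. If $\delta^\nu=0$ happens, the point is exactly feasible for~\eqref{prob:reduced.relaxed}, and Definition~\ref{def:VE.RGNEP} itself only needs $\delta^\nu\ge0$. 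I would note this case in a remark rather than treat it as an obstacle.

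Second, the stationarity inclusion. The VE condition of $\NEP$ is, by the definition of $\Theta^\nu_\mu$ and the differentiability of $h_\mu$ (Danskin's theorem, valid since $\mu<\ell^{-1}$), exactly
\[
0\in\nabla_{w^\nu}\theta^\nu(w^\nu_\rho,x^{-\nu}_\rho)+\rho\nabla_{w^\nu}h_\mu(w^\nu_\rho,x^{-\nu}_\rho)+\mathcal{N}_{W^\nu}(w^\nu_\rho).
\]
With $\zeta^\nu_\delta=\rho$, this is the first line of~\eqref{eq:VE.full}.

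Third, complementarity. We have $\zeta^\nu_\delta=\rho>0\ge0$, and $h_\mu(w^\nu_\rho,x^{-\nu}_\rho)-\delta^\nu=0\le0$. Therefore the product $\zeta^\nu_\delta\,(h_\mu-\delta^\nu)=0$, which is the second line of~\eqref{eq:VE.full}.

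Since all three conditions hold for every $\nu$, $w_\rho$ is a VE of $\hatRGNEP$. There is no real obstacle here. The only point needing care is that the gradient $\nabla_{w^\nu}h_\mu$ is well defined, which follows from Proposition~\ref{prop:new} under Assumption~\ref{asmp}.
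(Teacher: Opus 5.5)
Your proposal is correct and matches the paper's proof, which likewise just sets $w_\delta=w_\rho$, $\zeta^\nu_\delta=\rho$, and $\delta^\nu=h_\mu(w^\nu_\rho,x^{-\nu}_\rho)$ in Definition~\ref{def:VE.RGNEP}; feasibility and complementarity are immediate there because $h_\mu(w^\nu_\rho,x^{-\nu}_\rho)-\delta^\nu=0$. Your remark that $\delta^\nu$ may equal zero is harmless, since that definition does not require $\delta\in\Re^N_{++}$.
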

\begin{proof}
The assertion readily follows 
by setting $w_\delta = w_\rho$ and $\zeta^\nu_\delta=\rho$ with $\delta^\nu=h_\mu(w^\nu_\rho,x^{-\nu}_\rho)$ for $\nu=1,\dots,N$ in Definition~\ref{def:VE.RGNEP}.
\end{proof}

Lastly, we conclude this section with the proposition concerning the bounds on the feasibility violation of $h_\mu(w^\nu,x^{-\nu})\le 0$ around an approximate VE of the $\rho$-penalized NEP.
We make the following assumption, which imposes 
a Kurdyka--{\L}ojasiewicz-like property on $h_{\mu}$.
\begin{assumption}\label{asmp:jointKL}
    For all $\nu\in\{1,\dots,N\}$ the following conditions hold:
    Let $w^*\in W$ be such that $h_\mu(w^{*,\nu},x^{*,-\nu})=0$.
    Suppose that there exist $\alpha>0$, $\beta\in[0,1)$, and a neighborhood $\mathcal{U}^\nu$
    of $(w^{*,\nu},x^{*,-\nu})\in W^\nu\times X^{-\nu}$, where $X^{-\nu}\coloneq\prod_{\nu'\neq \nu} X^{\nu'}$,
    such that for all $(w^\nu,x^{-\nu})\in \mathcal{U}^\nu\cap (W^\nu\times X^{-\nu})$,
    \begin{align}\label{ieq:KLinequality}
        \alpha h_\mu(w^\nu,x^{-\nu})^\beta \le \dist(0,\nabla_{w^\nu}h_\mu(w^\nu,x^{-\nu})+\mathcal{N}_{W^\nu}(w^\nu)).
    \end{align}
\end{assumption}


\begin{proposition}\label{prop:feasibility.VE}
    For $\varepsilon>0$, let $w_\rho\in W$ be such that
	\begin{align}\label{ieq:approx.VE}
		\dist(0,\nabla_{w^\nu}\theta^\nu(w^\nu_\rho,x^{-\nu}_\rho)
		+\rho\nabla_{w^\nu}h_\mu(w^\nu_\rho,x^{-\nu}_\rho)
		+\mathcal{N}_{W^\nu}(w^\nu_\rho)) \le \varepsilon
	\end{align}
	for all $\nu\in\{1,\dots,N\}$.
	Suppose that Assumptions~\ref{asmp} and~\ref{asmp:jointKL} hold.
	Then,
	\begin{equation}
		h_\mu(w^\nu_\rho,x^{-\nu}_\rho)\le\left(\frac{\eps+M_\nu}{\alpha \rho}\right)^{\frac1\beta}\label{eq:prop42}
\end{equation}
	for all $\nu$, where $M_\nu \coloneq \max \|\nabla_{w^\nu}\theta^\nu(w^\nu,x^{-\nu})\|$.
\end{proposition}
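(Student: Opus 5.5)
Fix $\nu$. The plan is to combine the approximate stationarity \eqref{ieq:approx.VE} with the KL-type inequality \eqref{ieq:KLinequality} of Assumption~\ref{asmp:jointKL}, using only that $\mathcal{N}_{W^\nu}(w^\nu_\rho)$ is a convex cone. This is legitimate provided $(w^\nu_\rho,x^{-\nu}_\rho)$ lies in the neighborhood $\mathcal{U}^\nu$ on which \eqref{ieq:KLinequality} holds; we take this as implicit in the statement, i.e., $w_\rho$ is near a feasible reference point $w^*$ with $h_\mu(w^{*,\nu},x^{*,-\nu})=0$.

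\textbf{Step 1.} By \eqref{ieq:approx.VE} and closedness of the normal cone, there exist $v\in\mathcal{N}_{W^\nu}(w^\nu_\rho)$ and $e$ with $\|e\|\le\varepsilon$ such that
\[
e=\nabla_{w^\nu}\theta^\nu(w^\nu_\rho,x^{-\nu}_\rho)+\rho\nabla_{w^\nu}h_\mu(w^\nu_\rho,x^{-\nu}_\rho)+v .
\]
Dividing by $\rho>0$ and using that $v/\rho\in\mathcal{N}_{W^\nu}(w^\nu_\rho)$ because the normal cone is a cone, we get
\[
\nabla_{w^\nu}h_\mu(w^\nu_\rho,x^{-\nu}_\rho)+\frac{v}{\rho}=\frac{1}{\rho}\bigl(e-\nabla_{w^\nu}\theta^\nu(w^\nu_\rho,x^{-\nu}_\rho)\bigr).
\]
Hence
\[
\dist\bigl(0,\nabla_{w^\nu}h_\mu(w^\nu_\rho,x^{-\nu}_\rho)+\mathcal{N}_{W^\nu}(w^\nu_\rho)\bigr)\le\frac{\varepsilon+\|\nabla_{w^\nu}\theta^\nu(w^\nu_\rho,x^{-\nu}_\rho)\|}{\rho}\le\frac{\varepsilon+M_\nu}{\rho}.
\]
Here $M_\nu<\infty$, with the maximum taken over the compact set $X\times Y$; this follows from Assumption~\ref{asmp} together with Lemma~\ref{lem:smooth.lipschitz} (or Proposition~\ref{prop:new}).

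\textbf{Step 2.} Apply \eqref{ieq:KLinequality} at $(w^\nu_\rho,x^{-\nu}_\rho)$ to obtain
\[
\alpha\, h_\mu(w^\nu_\rho,x^{-\nu}_\rho)^\beta\le\frac{\varepsilon+M_\nu}{\rho}.
\]
Since $h_\mu\ge0$ by Proposition~\ref{prop:NIfunc}, we may raise both sides to the power $1/\beta$ (for $\beta\in(0,1)$), which yields \eqref{eq:prop42}. The degenerate case $\beta=0$ gives no bound on $h_\mu$; it should be excluded, or interpreted as forcing $\rho\le(\varepsilon+M_\nu)/\alpha$.

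\textbf{Main obstacle.} The only delicate point is the localization: justifying that $(w^\nu_\rho,x^{-\nu}_\rho)\in\mathcal{U}^\nu$. If this is not read as an implicit hypothesis, I would argue via continuity of $h_\mu$. For large $\rho$, penalization forces $h_\mu(w^\nu_\rho,x^{-\nu}_\rho)$ to be small, so limit points of $w_\rho$ are feasible and can serve as the reference point $w^*$. I expect that, as with the scaling by $\rho$ in Step 1, the proof simply assumes this localization.
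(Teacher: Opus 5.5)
Your proof is correct and matches the paper's: the paper also picks an element of norm at most $\varepsilon$ in $\nabla_{w^\nu}\theta^\nu+\rho\nabla_{w^\nu}h_\mu+\mathcal{N}_{W^\nu}(w^\nu_\rho)$, rescales by $1/\rho$ using the cone property, bounds $\dist(0,\nabla_{w^\nu}h_\mu(w^\nu_\rho,x^{-\nu}_\rho)+\mathcal{N}_{W^\nu}(w^\nu_\rho))$ by $(\varepsilon+M_\nu)/\rho$, and then applies Assumption~\ref{asmp:jointKL}, tacitly assuming, as you point out, that $(w^\nu_\rho,x^{-\nu}_\rho)\in\mathcal{U}^\nu$ and $\beta>0$. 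Your fallback localization argument is unnecessary and would fail as stated: for approximate VEs, large $\rho$ only makes limit points stationary for $h_\mu$ over $W^\nu$, and for nonconvex followers such stationary points need not be zeros of $h_\mu$, so the localization should be stated as an explicit hypothesis.
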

\begin{proof}
	Let 
	\[
		\xi^\nu_\rho\in
		\nabla_{w^\nu}\theta^\nu(w^\nu_\rho,x^{-\nu}_\rho)+
		\rho\nabla_{w^\nu}h_\mu(w^\nu_\rho,x^{-\nu}_\rho)+\mathcal{N}_{W^\nu}(w^\nu_\rho)
		\ \text{with}\ 
		\|\xi^\nu_\rho\|\le\eps.
	\]
	Then, we have
	\[
		\begin{aligned}
		\frac{\xi^\nu_\rho-\nabla_{w^\nu}\theta^\nu(w^\nu_\rho,x^{-\nu}_\rho)}{\rho}
		&\in\nabla_{w^\nu}h_\mu(w^\nu_\rho,x^{-\nu}_\rho)+
		\frac1\rho\mathcal{N}_{W^\nu}(w^\nu_\rho)\\
		&=\nabla_{w^\nu}h_\mu(w^\nu_\rho,x^{-\nu}_\rho)+\mathcal{N}_{W^\nu}(w^\nu_\rho),
		\end{aligned}
	\]
	where the last equality holds because $\mathcal{N}_{W^\nu}(w^\nu_\rho)$ is a cone.
	Hence, we have
	\[
		\begin{aligned}
		&\dist(0,\nabla_{w^\nu} h_\mu(w^\nu_\rho,x^{-\nu}_\rho)+\mathcal{N}_{W^\nu}(w^\nu_\rho))
		\le\left\|
			\frac{\xi^\nu_\rho-\nabla_{w^\nu}\theta^\nu(w^\nu_\rho,x^{-\nu}_\rho)}{\rho}
		\right\|\\
		\le&
		\frac{\|\xi^\nu_\rho\|+\|\nabla_{w^\nu}\theta^\nu(w^\nu_\rho,x^{-\nu}_\rho)\|}{\rho}
		\le\frac{\eps+M_\nu}{\rho},
		\end{aligned}
	\]
    from which together with Assumption~\ref{asmp:jointKL} the desired inequality\,\eqref{eq:prop42} follows.
\end{proof}

Finally, we summarize the relationships between the MLFG and all the NEPs in Figure~\ref{fig:rel} and Table~\ref{tab:rel}.
\begin{figure}[h]
    \centering
    \begin{tikzpicture}[
  scale = 0.75,
  box/.style={
    draw,
    rounded corners=5pt,
    minimum width=3.8cm,
    minimum height=0.6cm,
    align=center,
    thick,
    font=\normalsize
  }
]

\node[box] (mlfg) at (0,  0)   {MLFG (original)};
\node[box] (gnep) at (0, -1.5) {$\GNEP$};
\node[box] (hatgnep) at (0, -3.0) {$\hatGNEP$};
\node[box] (nep) at (-4.5, -5.0) {$\NEP$};
\node[box] (rgnep) at (4.5, -5.0) {$\hatRGNEP$};

\draw[Implies-Implies, double equal sign distance, thick]
  (mlfg) -- 
  node[midway, left, align=right, font=\small]
  {equivalent}
  (gnep);

\draw[-Implies, double equal sign distance, thick]
  ([xshift=-10pt]gnep.south) --
  node[midway, left, align=right, font=\small]
  {necessary}
  ([xshift=-10pt]hatgnep.north);

\draw[-Implies, double equal sign distance, thick]
  ([xshift=10pt]hatgnep.north) --
  node[midway, right, align=left, font=\small]
  {sufficient(under convextiy)}
  ([xshift=10pt]gnep.south);

\draw[-{Stealth}, thick]
  (hatgnep) --
  node[midway, left,  align=right, font=\small, xshift=-1cm]
    {penalized\\$h_\mu(w^\nu,x^{-\nu}) \le 0$}
  (nep);

\draw[-{Stealth}, thick]
  (hatgnep) --
  node[midway, right, align=left,  font=\small, xshift=1cm]
    {relaxed\\$h_\mu(w^\nu,x^{-\nu}) \le 0$}
  (rgnep);

\draw[-{Stealth}, thick]
  (nep) -- 
  node[midway, below, font=\small] {Table~\ref{tab:rel}}
  (rgnep);
  
\end{tikzpicture}
    \caption{Relationships between various noncooperative games}
    \label{fig:rel}
\end{figure}
\begin{table}[h]
    \centering
    \caption{Relationships between $\NEP$~and~$\hatRGNEP$}
    \label{tab:rel}
    \begin{tabular}{ccc}
         $\NEP$  & (rel.) & $\hatRGNEP$ \\\hline
         $\varepsilon$-NE at $w_\rho$ & \begin{tabular}[c]{@{}c@{}} $\implies$ \\ Assump.~\ref{asmp} \&~\ref{asmp:subanalyticity} \end{tabular} & \begin{tabular}[c]{@{}c@{}}$\varepsilon$-GNE with $\delta_\rho$ at $w_\rho$\\ ($\delta^\nu_\rho=h_\mu(w^\nu_\rho,x^{-\nu}_\rho)$)\end{tabular} \\
         VE at $w_\rho$ & $\implies$ & \begin{tabular}[c]{@{}c@{}} VE at $w_\rho$ \\ ($\delta^\nu=h_\mu(w^\nu_\rho,x^{-\nu}_\rho)$)\end{tabular} \\
         $\varepsilon$-approx. VE & \begin{tabular}[c]{@{}c@{}} $\implies$ \\ Assump.~\ref{asmp} \&~\ref{asmp:jointKL} \end{tabular} & $h_\mu(w^\nu_\rho,x^{-\nu}_\rho)\le\left(\frac{\varepsilon+M_\nu}{\alpha\rho}\right)^{1/\beta}$
    \end{tabular}
\end{table}

\section{Conclusions}\label{sec:conclusion}

This paper proposed a new reformulation for the MLFG by exploiting
the regularized Nikaido--Isoda function.
Unlike traditional reformulations such as EPEC and response-based 
approaches, our reformulation does not use any derivative information
regarding each follower's problem.
We analyzed the relation between the (No)LFNE and the approximated
NE for the resulting NEP.
Although we assumed that the followers' constraint sets are
independent of~$x$ throughout this paper,
readers may be interested in the more general case of $Y^\omega(x)$.
However, the analysis may become much more difficult since it may require
the set-valued analysis as well.
We also leave it for future work.


\vspace{10pt}

\appendix  

\section*{Appendix: Implicit differentiation of the followers' response}

Using Fischer--Burmeister functions~\cite{Facchinei2004}, the complementarity conditions of KKT system~\eqref{KKT.system} are equivalently 
reformulated as a nonsmooth equation as follows:
\[
0=H_0(x,y,\lambda)  \coloneq 
\begin{bmatrix}
    \nabla_{y^\omega}\gamma^\omega(x,y^\omega,y^{-\omega})+\nabla_{y^\omega} g^\omega(y^\omega)\lambda^\omega \\
    [\phi(\lambda^\omega_i,-g^\omega_i(y^\omega))]_{i=1}^{p_\omega}
\end{bmatrix}_{\omega=1}^M \in\Re^{m+p},
\]
where $p \coloneq p_1+\dots+p_M$ and $\phi\colon\Re^2\to\Re$ satisfies  $\phi(a,b)=0$ if and only if $0\le a \perp b\ge 0$.
Since $\phi(a,b)$ is not differentiable at some points in general, a smoothing approximated Fischer--Burmeister function $\phi_\varepsilon$,
where $\varepsilon>0$ denotes an approximation parameter, is adopted such that $\lim_{\varepsilon\searrow 0}\phi_\varepsilon(a,b)=\phi(a,b)$ for any $a,b\in\Re^2$.
Then, by replacing $\phi$ with $\phi_\varepsilon$, the function $H_0$ is approximated by the smoothed system of equations:
$H_\varepsilon(x,y,\lambda)=0$.

Under the uniqueness assumption of $y$ and $\lambda$ for $H_\varepsilon(x,y,\lambda)=0$, the solution to the equation
with respect to $y$ and $\lambda$ is written as $y_\varepsilon(x)$ and $\lambda_\varepsilon(x)$.
By using the implicit function theorem~\cite[Proposition~4.5]{Hori2024},
the Jacobian $[\nabla y_\varepsilon(x), \nabla \lambda_\varepsilon(x)]$ is computed as
\[
[\nabla y_\varepsilon(x), \nabla \lambda_\varepsilon(x)]
=-\nabla_x H_\varepsilon(x,y_\varepsilon(x),\lambda_\varepsilon(x))
\begin{bmatrix}
    \nabla_y H_\varepsilon(x,y_\varepsilon(x),\lambda_\varepsilon(x)) \\
    \nabla_\lambda H_\varepsilon(x,y_\varepsilon(x),\lambda_\varepsilon(x))
\end{bmatrix}^{-1}.
\]
This means that the gradient computation of the (implicit) response requires Hessian matrices
$\nabla^2_y \gamma^\omega$ and $\nabla^2_{y^\omega}g^\omega$ and inverse matrix as shown above.

\bibliographystyle{abbrv}
\bibliography{reference.bib}

@article{Allevi2018, 
  year     = {2018}, 
  title    = {On an equilibrium problem with complementarity constraints formulation of pay-as-clear electricity market with demand elasticity}, 
  author   = {Allevi, Elisabetta and Aussel, Didier and Riccardi, Rossana}, 
  journal  = {Journal of Global Optimization}, 
  issn     = {0925-5001}, 
  doi      = {10.1007/s10898-017-0595-9}, 
  pages    = {329--346}, 
  number   = {2}, 
  volume   = {70}
}

@article{Ashraf2023, 
  year     = {2023}, 
  title    = {Optimal resource allocation strategies of competing new online delivery platforms using the Bass diffusion model}, 
  author   = {Ashraf, Saad and Bardhan, Amit Kumar}, 
  journal  = {Annals of Operations Research}, 
  issn     = {0254-5330}, 
  doi      = {10.1007/s10479-023-05410-6}, 
  pages    = {1--20}
}

@book{Aubin1979, 
  year = {1979},
  title     = {Mathematical Methods of Game and Economic Theory}, 
  author    = {Aubin, Jean Pierre}, 
  isbn      = {9780486462653}, 
  publisher = {North-Holland, Amsterdam}
}

@inproceedings{Aussel2020,
  author = {Aussel, D and Svensson, A},
  booktitle = {Bilevel Optimization---Advances and Next Challenges},
  editor = {Dempe, S and Zemkoho, A},
  title = {A short state of the art on multi-leader--follower games},
  publisher = {Springer International Publishing},
  address = {Cham},
  year = {2020},
}

@article{Bierstone1988, 
  year    = {1988}, 
  title   = {Semianalytic and subanalytic sets}, 
  author  = {Bierstone, Edward and Milman, Pierre D.}, 
  journal = {Publications Mathématiques de l'Institut des Hautes Études Scientifiques}, 
  issn    = {0073-8301}, 
  doi     = {10.1007/bf02699126}, 
  pages   = {5--42}, 
  number  = {1}, 
  volume  = {67}
}

@article{Bolte2007, 
  year     = {2007}, 
  title    = {The {{\L}}ojasiewicz Inequality for Nonsmooth Subanalytic Functions with Applications to Subgradient Dynamical Systems}, 
  author   = {Bolte, Jerome and Daniilidis, Aris and Lewis, Adrian}, 
  journal  = {{SIAM} Journal on Optimization}, 
  issn     = {1052-6234}, 
  doi      = {10.1137/050644641}, 
  pages    = {1205--1223}, 
  number   = {4}, 
  volume   = {17}
}

@book{Bonnans2000, 
  year     = {2000}, 
  title    = {Perturbation Analysis of Optimization Problems}, 
  author   = {Bonnans, J. Frédéric and Shapiro, Alexander}, 
  publisher= {Springer New York, NY},
  isbn     = {9781461271291}, 
  doi      = {10.1007/978-1-4612-1394-9}
}

@InProceedings{Chen202567l,
  title = 	 {Efficient First-Order Optimization on the Pareto Set for Multi-Objective Learning under Preference Guidance},
  author =       {Chen, Lisha and Xiao, Quan and Fukuda, Ellen Hidemi and Chen, Xinyi and Yuan, Kun and Chen, Tianyi},
  booktitle = 	 {Proceedings of the 42nd International Conference on Machine Learning},
  pages = 	 {9443--9486},
  year = 	 {2025},
  editor = 	 {Singh, Aarti and Fazel, Maryam and Hsu, Daniel and Lacoste-Julien, Simon and Berkenkamp, Felix and Maharaj, Tegan and Wagstaff, Kiri and Zhu, Jerry},
  volume = 	 {267},
  series = 	 {Proceedings of Machine Learning Research},
  month = 	 {13--19 Jul},
  publisher =    {PMLR},
  url = 	 {https://proceedings.mlr.press/v267/chen25bw.html}
}

@book{Clarke1990, 
  year      = {1990}, 
  title     = {Optimization and Nonsmooth Analysis}, 
  author    = {Clarke, Frank H}, 
  isbn      = {9780898712568}, 
  url       = {http://epubs.siam.org/doi/book/10.1137/1.9781611971309}, 
  publisher = {Society for Industrial and Applied Mathematics}, 
  doi       = {10.1137/1.9781611971309}
}

@article{Dempe2007, 
  year     = {2007}, 
  title    = {New necessary optimality conditions in optimistic bilevel programming}, 
  author   = {Dempe, S. and Dutta, J. and Mordukhovich, B. S.}, 
  journal  = {Optimization}, 
  issn     = {0233-1934}, 
  doi      = {10.1080/02331930701617551}, 
  pages    = {577--604}, 
  number   = {5-6}, 
  volume   = {56}
}

@article{Dempe2014, 
  year     = {2014}, 
  title    = {Necessary optimality conditions in pessimistic bilevel programming}, 
  author   = {Dempe, S. and Mordukhovich, B.S. and Zemkoho, A.B.}, 
  journal  = {Optimization}, 
  issn     = {0233-1934}, 
  doi      = {10.1080/02331934.2012.696641}, 
  pages    = {505--533}, 
  number   = {4}, 
  volume   = {63}
}

@article{Dries1996, 
  year    = {1996}, 
  title   = {Geometric categories and O-minimal structures}, 
  author  = {Dries, L. and Miller, C.}, 
  journal = {Duke Mathematical Journal}, 
  pages   = {497--540}, 
  volume  = {84}
}

@inproceedings{Dutta2006,
  address   = {Boston, MA},
  author    = {Dutta, Joydeep
               and Dempe, Stephan},
  booktitle = {Optimization with Multivalued Mappings: Theory, Applications, and Algorithms},
  doi       = {10.1007/0-387-34221-4_3},
  editor    = {Dempe, Stephan
               and Kalashnikov, Vyacheslav},
  isbn      = {978-0-387-34221-4},
  pages     = {51--71},
  publisher = {Springer US},
  title     = {Bilevel programming with convex lower level problems},
  url       = {https://doi.org/10.1007/0-387-34221-4_3},
  year      = {2006}
}

@article{Facchinei1999, 
  year     = {1999}, 
  title    = {A smoothing method for mathematical programs with equilibrium constraints}, 
  author   = {Facchinei, Francisco and Jiang, Houyuan and Qi, Liqun}, 
  journal  = {Mathematical Programming}, 
  issn     = {0025-5610}, 
  doi      = {10.1007/s10107990015a}, 
  pages    = {107--134}, 
  number   = {1}, 
  volume   = {85}
}

@book{Facchinei2004, 
  year      = {2004}, 
  title     = {Finite-Dimensional Variational Inequalities and Complementarity Problems}, 
  author    = {Facchinei, Francisco and Pang, Jong-Shi}, 
  isbn      = {9780387955810}, 
  url       = {http://link.springer.com/10.1007/b97544}, 
  series    = {Springer Series in Operations Research and Financial Engineering}, 
  publisher = {Springer New York}, 
  address   = {New York, {NY}}, 
  doi       = {10.1007/b97544}
}

@article{Facchinei2010, 
  year     = {2010}, 
  title    = {Generalized {N}ash Equilibrium Problems}, 
  author   = {Facchinei, Francisco and Kanzow, Christian}, 
  journal  = {Annals of Operations Research}, 
  issn     = {0254-5330}, 
  doi      = {10.1007/s10479-009-0653-x}, 
  pages    = {177--211}, 
  number   = {1}, 
  volume   = {175}
}

@INPROCEEDINGS{Franci2025,
  author={Franci, Barbara and Fabiani, Filippo and Schmidt, Martin and Staudigl, Mathias},
  booktitle={2025 European Control Conference (ECC)}, 
  title={A {G}auss–{S}eidel method for solving multi-leader-multi-follower games}, 
  year={2025},
  volume={},
  number={},
  pages={2775-2780},
  doi={10.23919/ECC65951.2025.11187084}
}

@article{Gurkan2009, 
  year     = {2009}, 
  title    = {Approximations of {N}ash equilibria}, 
  author   = {Gürkan, Gül and Pang, Jong-Shi}, 
  journal  = {Mathematical Programming}, 
  issn     = {0025-5610}, 
  doi      = {10.1007/s10107-007-0156-y}, 
  pages    = {223--253}, 
  number   = {1-2}, 
  volume   = {117}
}

@article{Herty2022, 
  year     = {2022}, 
  title    = {Solving quadratic multi-leader-follower games by smoothing the follower's best response}, 
  author   = {Herty, Michael and Steffensen, Sonja and Thünen, Anna}, 
  journal  = {Optimization Methods and Software}, 
  issn     = {1055-6788}, 
  doi      = {10.1080/10556788.2020.1828412}, 
  pages    = {772--799}, 
  number   = {2}, 
  volume   = {37}
}

@article{Heusinger2009, 
  year     = {2009}, 
  title    = {Optimization reformulations of the generalized {N}ash equilibrium problem using {N}ikaido-{I}soda-type functions}, 
  author   = {von Heusinger, Anna and Kanzow, Christian}, 
  journal  = {Computational Optimization and Applications}, 
  issn     = {0926-6003}, 
  doi      = {10.1007/s10589-007-9145-6}, 
  pages    = {353--377}, 
  number   = {3}, 
  volume   = {43}
}

@article{Hori2019, 
  year     = {2019}, 
  title    = {Gauss–{S}eidel Method for Multi-leader–follower Games}, 
  author   = {Hori, Atsushi and Fukushima, Masao}, 
  journal  = {Journal of Optimization Theory and Applications}, 
  issn     = {0022-3239}, 
  doi      = {10.1007/s10957-018-1391-5}, 
  pages    = {651--670}, 
  number   = {2}, 
  volume   = {180}
}

@article{Hori2024, 
  year     = {2024}, 
  title    = {A Method for Multi-Leader–Multi-Follower Games by Smoothing the Followers’ Response Function}, 
  author   = {Hori, Atsushi and Tsuyuguchi, Daisuke and Fukuda, Ellen H.}, 
  journal  = {Journal of Optimization Theory and Applications}, 
  issn     = {0022-3239}, 
  doi      = {10.1007/s10957-024-02506-2}, 
  pages    = {305--335}, 
  number   = {1}, 
  volume   = {203}
}

@article{Hu2011, 
  year     = {2011}, 
  title    = {Variational Inequality Formulation of a Class of Multi-Leader-Follower Games}, 
  author   = {Hu, Ming and Fukushima, Masao}, 
  journal  = {Journal of Optimization Theory and Applications}, 
  issn     = {0022-3239}, 
  doi      = {10.1007/s10957-011-9901-8}, 
  pages    = {455}, 
  number   = {3}, 
  volume   = {151}
}

@article{Hu2015, 
  year     = {2015}, 
  title    = {Multi-Leader-Follower Games: Models, Methods and Applications}, 
  author   = {Hu, Ming and Fukushima, Masao}, 
  journal  = {Journal of the Operations Research Society of Japan}, 
  issn     = {0453-4514}, 
  doi      = {10.15807/jorsj.58.1}, 
  pages    = {1--23}, 
  number   = {1}, 
  volume   = {58}
}

@article{Huang2006, 
  year     = {2006}, 
  title    = {A Sequential Smooth Penalization Approach to Mathematical Programs with Complementarity Constraints}, 
  author   = {Huang, X. X. and Yang, X. Q. and Zhu, D. L.}, 
  journal  = {Numerical Functional Analysis and Optimization}, 
  issn     = {0163-0563}, 
  doi      = {10.1080/01630560500538797}, 
  pages    = {71--98}, 
  number   = {1}, 
  volume   = {27}
}

@article{Kim2018, 
  year     = {2018}, 
  title    = {A Novel Bitcoin Mining Scheme Based on the Multi-Leader Multi-Follower {S}tackelberg Game Model}, 
  author   = {Kim, Sungwook}, 
  journal  = {{IEEE} Access}, 
  issn     = {2169-3536}, 
  doi      = {10.1109/access.2018.2867631}, 
  pages    = {48902--48912}, 
  volume   = {6}
}

@article{Kosiba2025, 
  year     = {2025}, 
  title    = {The generalized Łojasiewicz inequality for definable and subanalytic multifunctions}, 
  author   = {Kosiba, Michał}, 
  journal  = {Journal of Mathematical Analysis and Applications}, 
  issn     = {0022-247X}, 
  doi      = {10.1016/j.jmaa.2024.128977}, 
  pages    = {128977}, 
  number   = {2}, 
  volume   = {543}
}

@article{kumar2022, 
  year     = {2022}, 
  title    = {A bilevel game model for ascertaining competitive target prices for a buyer in negotiation with multiple suppliers}, 
  author   = {Kumar, Akhilesh and Gupta, Anjana and Mehra, Aparna}, 
  journal  = {{RAIRO} - Operations Research}, 
  issn     = {0399-0559}, 
  doi      = {10.1051/ro/2021185}, 
  pages    = {293--330}, 
  number   = {1}, 
  volume   = {56}
}

@article{Lei2023, 
  year     = {2023}, 
  title    = {A {N}ash–{S}tackelberg game approach to analyze strategic bidding for multiple {DER} aggregators in electricity markets}, 
  author   = {Lei, Zhenxing and Liu, Mingbo and Shen, Zhijun and Lu, Junqi and Lu, Zhilin}, 
  journal  = {Sustainable Energy, Grids and Networks}, 
  issn     = {2352-4677}, 
  doi      = {10.1016/j.segan.2023.101111}, 
  pages    = {101111}, 
  volume   = {35}
}

@article{Leyffer2010, 
  year   = {2010}, 
  title  = {Solving multi-leader–common-follower games}, 
  journal= {Optimization Methods and Software},
  author = {Leyffer, Sven and Munson, Todd}, 
  pages  = {601--623}, 
  volume = {25}
}

@article{Liu2024, 
  year     = {2024}, 
  title    = {Moreau Envelope for Nonconvex Bi-Level Optimization: A Single-loop and {H}essian-free Solution Strategy}, 
  author   = {Liu, Risheng and Liu, Zhu and Yao, Wei and Zeng, Shangzhi and Zhang, Jin}, 
  journal  = {{arXiv}}, 
  doi      = {10.48550/arxiv.2405.09927}, 
  eprint   = {2405.09927}
}

@article{Lu2024, 
  year    = {2024}, 
  title   = {First-Order Penalty Methods for Bilevel Optimization}, 
  author  = {Lu, Zhaosong and Mei, Sanyou}, 
  journal = {{SIAM} Journal on Optimization}, 
  issn    = {1052-6234}, 
  doi     = {10.1137/23m1566753}, 
  pages   = {1937--1969}, 
  number  = {2}, 
  volume  = {34}
}

@INPROCEEDINGS{Lyu2022,
  author={Lyu, Ting and Xu, Haitao and Han, Zhu},
  booktitle={ICC 2022 - IEEE International Conference on Communications}, 
  title={Multi-leader Multi-follower Stackelberg Game based Resource Allocation in Multi-access Edge Computing}, 
  year={2022},
  volume={},
  number={},
  pages={4306-4311},
  doi={10.1109/ICC45855.2022.9838425}
}

@article{Nash1950, 
  year    = {1950}, 
  title   = {Equilibrium points in n-person games}, 
  author  = {Nash, John F.}, 
  journal = {Proceedings of the National Academy of Sciences}, 
  issn    = {0027-8424}, 
  doi     = {10.1073/pnas.36.1.48}, 
  pmid    = {16588946}, 
  pmcid   = {{PMC}1063129}, 
  pages   = {48--49}, 
  number  = {1}, 
  volume  = {36}
}

@article{Pang2005n, 
  year     = {2005}, 
  title    = {Quasi-variational inequalities, generalized {N}ash equilibria, and multi-leader-follower games}, 
  author   = {Pang, Jong-Shi and Fukushima, Masao}, 
  journal  = {Computational Management Science}, 
  issn     = {1619-697X}, 
  doi      = {10.1007/s10287-004-0010-0}, 
  pages    = {21--56}, 
  number   = {1}, 
  volume   = {2}
}

@article{Su2004, 
  year     = {2004}, 
  title    = {A sequential {NCP} algorithm for solving equilibrium problems with equilibrium constraints}, 
  author   = {Su, Che-Lin}, 
  journal  = {Manuscript, Department of Management Science and Engineering, Stanford University, Stanford, CA}, 
  url      = {http://citeseerx.ist.psu.edu/viewdoc/download?doi=10.1.1.101.6677\&rep=rep1\&type=pdf}, 
  pages    = {1--24}
}

@article{Vazifeh2021, 
  year     = {2021}, 
  title    = {Biomass supply chain coordination for remote communities: A game-theoretic modeling and analysis approach}, 
  author   = {Vazifeh, Zahra and Mafakheri, Fereshteh and An, Chunjiang}, 
  journal  = {Sustainable Cities and Society}, 
  issn     = {2210-6707}, 
  doi      = {10.1016/j.scs.2021.102819}, 
  pages    = {102819}, 
  volume   = {69}
}

@article{Xiong2019, 
  year     = {2019}, 
  title    = {Cloud/Edge Computing Service Management in Blockchain Networks: Multi-Leader Multi-Follower Game-Based {ADMM} for Pricing}, 
  author   = {Xiong, Zehui and Kang, Jiawen and Niyato, Dusit and Wang, Ping and Poor, H. Vincent}, 
  journal  = {{IEEE} Transactions on Services Computing}, 
  issn     = {1939-1374}, 
  doi      = {10.1109/tsc.2019.2947914}, 
  pages    = {356--367}, 
  number   = {2}, 
  volume   = {13}
}

@article{Xu2021, 
  year     = {2021}, 
  title    = {Privacy-preserving incentive mechanism for multi-leader multi-follower {IoT}-edge computing market: A reinforcement learning approach}, 
  author   = {Xu, Huiying and Qiu, Xiaoyu and Zhang, Weikun and Liu, Kang and Liu, Shuo and Chen, Wuhui}, 
  journal  = {Journal of Systems Architecture}, 
  issn     = {1383-7621}, 
  doi      = {10.1016/j.sysarc.2020.101932}, 
  pages    = {101932}, 
  volume   = {114}
}

@inproceedings{Yao2024,
 author = {Yao, Wei and Yin, Haian and Zeng, Shangzhi and Zhang, Jin},
 booktitle = {International Conference on Learning Representations},
 editor = {Y. Yue and A. Garg and N. Peng and F. Sha and R. Yu},
 pages = {55516--55549},
 title = {Overcoming Lower-Level Constraints in Bilevel Optimization: A Novel Approach with Regularized Gap Functions},
 url = {https://proceedings.iclr.cc/paper_files/paper/2025/file/8b8fe72f3193fe78ac353ebcc686b395-Paper-Conference.pdf},
 volume = {2025},
 year = {2025}
}

@article{Yao2024gy6, 
  year     = {2024}, 
  title    = {Constrained Bi-Level Optimization: Proximal {L}agrangian Value function Approach and {H}essian-free Algorithm}, 
  author   = {Yao, Wei and Yu, Chengming and Zeng, Shangzhi and Zhang, Jin}, 
  journal  = {{arXiv}}, 
  doi      = {10.48550/arxiv.2401.16164}, 
  eprint   = {2401.16164}
}

@article{Ye1995, 
  year     = {1995}, 
  title    = {Optimality conditions for bilevel programming problems}, 
  author   = {Ye, J. J. and Zhu, D. L.}, 
  journal  = {Optimization}, 
  issn     = {0233-1934}, 
  doi      = {10.1080/02331939508844060}, 
  pages    = {9--27}, 
  number   = {1}, 
  volume   = {33}
}

@article{Ye1997, 
  year     = {1997}, 
  title    = {Exact Penalization and Necessary Optimality Conditions for Generalized Bilevel Programming Problems}, 
  author   = {Ye, J. J. and Zhu, D. L. and Zhu, Q. J.}, 
  journal  = {{SIAM} Journal on Optimization}, 
  issn     = {1052-6234}, 
  doi      = {10.1137/s1052623493257344}, 
  pages    = {481--507}, 
  number   = {2}, 
  volume   = {7}
}

@article{Zhang201675s, 
  year     = {2016}, 
  title    = {A Multi-Leader Multi-Follower {S}tackelberg Game for Resource Management in {LTE} Unlicensed}, 
  author   = {Zhang, Huaqing and Xiao, Yong and Cai, Lin X. and Niyato, Dusit and Song, Lingyang and Han, Zhu}, 
  journal  = {{IEEE} Transactions on Wireless Communications}, 
  issn     = {1536-1276}, 
  doi      = {10.1109/twc.2016.2623603}, 
  pages    = {348--361}, 
  number   = {1}, 
  volume   = {16}
}

\end{document}